\tikzset{commutative diagrams/.cd}
\tikzstyle{every node}=[anchor=west, minimum height=3em]
\definecolor{forestgreen}{rgb}{0.00, 0.39, 0.00} 
\definecolor{blueblue}{rgb}{0.40, 0.00, 1.00} 
\definecolor{sienna}{rgb}{0.33, 0.08, 0.11}
\newcommand{\defi}{\textbf}
\theoremstyle{plain}
\newtheorem{theorem}{Theorem}[section] 
\newtheorem*{theorem*}{Theorem}
\newtheorem*{theoremfr*}{Théorème}
\newtheorem*{hypothesisfr*}{Hypothèse}
\newtheorem*{hypothesis*}{Hypothesis}
\newtheorem{lemma}[theorem]{Lemma}
\newtheorem{proposition}[theorem]{Proposition}
\newtheorem{corollary}[theorem]{Corollary}
\newtheorem{notation}[theorem]{Notation}
\theoremstyle{definition}
\newtheorem{definition}[theorem]{Definition}
\newtheorem*{definition*}{Definition}
\theoremstyle{remark}
\newtheorem{remark}[theorem]{\sc Remark}
\newtheorem*{question*}{\sc Question}
\newtheorem*{remark*}{\sc Remark}
\newtheorem*{remarkfr*}{\sc Remarque}
\newtheorem*{examplefr*}{\sc Exemple}
\newtheorem*{example*}{\sc Example}
\newtheorem{question}[theorem]{\sc Question}
\newtheorem{example}[theorem]{\sc Example}
\def\bR{{\mathbb R}}
\def\min{{\rm min\ }}
\def\const.{{\rm const.}}
\pgfplotsset{width=7cm,compat=1.8}
\renewcommand*{\numberline}[1]{\hb@xt@1em{#1\hfil}} 
\begin{document}
\title{Constructing Separable Arnold Snakes of Morse Polynomials}
\date{\today}
\author{Miruna-\c Stefana Sorea}
\address{Max-Planck-Institut für Mathematik in den Naturwissenschaften, Leipzig, Germany}
\email{miruna.sorea@mis.mpg.de}
\keywords{Arnold snake, contact tree, separable permutation, Morse polynomial}
\maketitle
\begin{abstract}
We give a new and constructive proof of the existence of a special class of univariate polynomials whose graphs have preassigned shapes. By definition, all the critical points of a Morse polynomial function are real and distinct and all its critical values are distinct. Thus we can associate to it an alternating permutation: the so-called \emph{Arnold snake}, given by the relative positions of its critical values. We realise any separable alternating permutation as the Arnold snake of a Morse polynomial. 
\end{abstract}
\renewcommand{\contentsname}{}
\section*{Introduction}
Let us consider a Morse polynomial function $P:\bR\rightarrow\bR$ in one variable. By definition, all the critical points of $P$ are real and distinct and all its critical values are distinct. Thus we can associate to $P$ an alternating sequence of real numbers and an alternating permutation called \emph{Arnold snake}. Both are given by its critical values. 

\begin{example*}\label{ExSnakeOfAPolynomial}
Consider the polynomial $P:\mathbb{R}\rightarrow\mathbb{R}$, $P(x):=\int_{0}^x (5t-5)(t-2)(t-4)(t-7)\mathrm{d}t,$ which has four real distinct critical points. The alternating sequence of its critical values is $[20.7,18.4,28.8,-44.1]$. The Arnold snake associated to $P$ is the following alternating permutation 
$\sigma:=\begin{pmatrix}
    1 & 2 & 3 & 4\\
    3 & 2 & 4 & 1
  \end{pmatrix}
  $ (see the figure below). An alternating permutation is simply called a \emph{snake}.

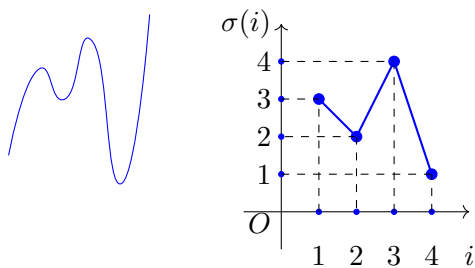
\begin{figure}[H]
\begin{center}
\begin{tikzpicture}[scale=0.5]
\draw[->] (-1,0) -- (5,0) node[below] {$i$};
      \draw[->] (0,-1) -- (0,5) node[left] {$\sigma(i)$};
\tikzstyle{vertex} = [draw,blue]
\tikzstyle{edge} = [draw,dashed]

\begin{scope}[xshift=-8cm,scale=0.75]
    \draw [blue] plot [smooth, tension=1] coordinates { (1,2) (2,5) (3,4) (4,6) (5,1) (6,7)};
\end{scope}
\begin{scope}
\coordinate (A) at (1,3);
\coordinate (B) at (2,2);
\coordinate (C) at (3,4);
\coordinate (M) at (4,1);
\coordinate (O) at (0,-0.3);

\coordinate (D) at (1,0);
\coordinate (E) at (2,0);
\coordinate (F) at (3,0);
\coordinate (P) at (4,0);

\coordinate (G) at (0,1);
\coordinate (H) at (0,2);
\coordinate (I) at (0,3);
\coordinate (R) at (0,4);

\node at (O) [left] {$O$};
\node at (D) [below] {$1$};
\node at (E) [below] {$2$};
\node at (F) [below] {$3$};
\node at (P) [below] {$4$};

\node at (G) [left] {$1$};
\node at (H) [left] {$2$};
\node at (I) [left] {$3$};
\node at (R) [left] {$4$};

\fill[vertex] (A) circle(4pt);
\fill[vertex] (B) circle(4pt);
\fill[vertex] (C) circle(4pt);
\fill[vertex] (M) circle(4pt);

\fill[vertex] (D) circle(2pt);
\fill[vertex] (E) circle(2pt);
\fill[vertex] (F) circle(2pt);
\fill[vertex] (P) circle(2pt);

\fill[vertex] (G) circle(2pt);
\fill[vertex] (H) circle(2pt);
\fill[vertex] (I) circle(2pt);
\fill[vertex] (R) circle(2pt);

\draw[edge] (B)--(E);
\draw[edge] (B)--(H);

\draw[edge] (D)--(A);
\draw[edge] (I)--(A);

\draw[edge] (F)--(C);
\draw[edge] (R)--(C);

\draw[edge] (P)--(M);
\draw[edge] (G)--(M);

\tikzstyle{edge} = [draw, blue, thick]
\draw[edge] (B)--(A);
\draw[edge] (B)--(C);
\draw[edge] (C)--(M);

\end{scope}

\end{tikzpicture}
\end{center}
\vspace{-1.5\baselineskip}
\caption{The graph of $P$ and its Arnold snake $\sigma$.
\label{Fig:ExSnakeOfAPolynomial}}
\end{figure}
\end{example*}

Conversely, two questions appear, both regarding the existence of polynomials whose graphs have prescribed shapes:

\begin{question}\label{quest:altPermExistence}
Given an alternating sequence of real numbers, can we find a Morse polynomial that realises this alternating sequence?
\end{question}

\begin{question}\label{quest:snakesExistence}
Given a permutation $\sigma$ that is a snake, can we find a Morse polynomial that realises $\sigma$ as its Arnold snake?
\end{question} 

The partial positive answer to Question \ref{quest:altPermExistence} was given by Davis in \cite{Da}. He proved the existence of polynomials with prescribed critical values. Nevertheless, Davis's approach was not constructive. A similar proof to the one given by Davis can be found in \cite[page 17]{Do}, where Douady gave a proof of existence for polynomials of degree $4$ and a sketch of the proof for the general case. For more references, see \cite[page 853]{My}. The partial affirmative answer to Question \ref{quest:snakesExistence} was given by Arnold in the theorem below. For the proof and for more information on the enumeration of snakes, see \cite{Ar1}, \cite{CN} or \cite[page 59]{La} and \cite{So1}.

\begin{theorem*}\cite[Theorem  29, page 37]{Ar1}\label{Th:ArnoldsSnakes}
The number of snakes is equal to the number of topologically inequivalent Morse polynomials in one variable.
\end{theorem*}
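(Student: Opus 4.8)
The plan is to set up an explicit bijection between the set of snakes and the set of topological equivalence classes of Morse polynomials --- where two such polynomials are topologically equivalent if one is carried to the other by orientation-preserving homeomorphic changes of coordinates in the source and in the target line --- the bijection in one direction being $P\longmapsto(\text{Arnold snake of }P)$.

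\emph{Well-definedness.} If $P$ is a Morse polynomial of degree $n+1$ with critical points $c_1<\cdots<c_n$, then $P'$ has constant sign on each interval $(c_i,c_{i+1})$, so consecutive critical points are alternately local maxima and local minima and the sequence $P(c_1),\dots,P(c_n)$ strictly zigzags; hence the permutation recording the relative order of the critical values is an alternating permutation, i.e.\ a snake. An orientation-preserving reparametrisation of source and target preserves the number of critical points, their left-to-right order, and the relative order of the critical values, so topologically equivalent Morse polynomials yield the same snake.

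\emph{Injectivity.} Suppose $P_1$ and $P_2$ share the Arnold snake $\sigma\in S_n$; in particular each has $n$ critical points. I would fix an orientation-preserving homeomorphism $h$ of the source sending the $i$-th critical point of $P_1$ to that of $P_2$ and restricting to a monotone homeomorphism on each of the $n-1$ bounded complementary intervals and the two unbounded ones, and then build an orientation-preserving homeomorphism $k$ of the target from the combinatorial pattern --- recorded exactly by $\sigma$ --- that describes how the value intervals $P_j\big([c_i,c_{i+1}]\big)$ are nested and which endpoints of the graphs lie above which. Since each $P_j$ restricts to a homeomorphism from a bounded interval between consecutive critical points onto its image, the relation $k\circ P_1=P_2\circ h$ can be imposed piece by piece and the pieces glued, the unbounded ends treated the same way; hence $P_1$ and $P_2$ are topologically equivalent and the map is injective. (If one also allows the source reflection $x\mapsto -x$, one identifies throughout each snake with its reverse and argues identically.)

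\emph{Surjectivity --- the main obstacle.} This is precisely Question \ref{quest:snakesExistence}, and I would reduce it to Question \ref{quest:altPermExistence}: given a snake $\sigma\in S_n$, choose reals $v_1<\cdots<v_n$ and set $w_i:=v_{\sigma(i)}$; since $\sigma$ is alternating, $(w_1,\dots,w_n)$ is an alternating sequence of real numbers, so an affirmative answer to Question \ref{quest:altPermExistence} produces a Morse polynomial with critical points $c_1<\cdots<c_n$ and $P(c_i)=w_i$, whose Arnold snake is then $\sigma$. The whole weight of the theorem therefore lies in this realisation step, which is the genuinely hard point; it is supplied by Davis's existence theorem for polynomials with prescribed critical values \cite{Da} (with the degree-$4$ case already in Douady \cite{Do}), whose proof is a non-constructive degree/connectedness argument on the space of polynomials with fixed leading coefficient --- and making it explicit, at least for separable $\sigma$, is exactly the purpose of the present paper. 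An alternative, still non-elementary, route would be an induction on $n$ inserting or deleting the extremal critical value by a controlled perturbation of a polynomial realising a snake of length $n-1$. Finally I would observe that no parity bookkeeping is needed: a Morse polynomial with $n$ critical points has degree $n+1$, and its behaviour at $\pm\infty$ is already forced by whether $\sigma$ begins and ends with an ascent or a descent, so snakes of length $n$ correspond to topological types with exactly $n$ critical points, and summing over $n$ gives the stated equality of numbers.
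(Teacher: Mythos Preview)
The paper does not supply its own proof of this theorem; it is quoted as Arnold's result, and the text immediately preceding the statement defers the proof to the references \cite{Ar1}, \cite{CN}, \cite{La} and \cite{So1}. There is therefore nothing in the paper to compare your argument against directly.

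That said, your outline is a sensible one and is consistent with how the paper frames the surrounding questions. Two remarks. First, the injectivity step is the sketchiest part: saying that $k$ is built ``from the combinatorial pattern'' and that the relation $k\circ P_1=P_2\circ h$ can be ``imposed piece by piece and glued'' hides the real check, namely that the various piecewise definitions of $k$ on overlapping value intervals are compatible; this is precisely where the fact that both polynomials share the same snake must be used, and it deserves an explicit verification. Second, your reduction of surjectivity to Davis's existence theorem is exactly how the paper itself situates the result --- it cites \cite{Da} as the (non-constructive) affirmative answer to Question~\ref{quest:altPermExistence}, from which the affirmative answer to Question~\ref{quest:snakesExistence} follows just as you describe --- so on that point you are aligned with the paper's viewpoint, even though the paper's own contribution is to replace Davis's argument by an explicit construction in the separable case.
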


Our main result is the following theorem (see the precise statement in Section \ref{subsect:realising1var}, Theorem \ref{Th:OriginalConstructionOneVariable}). 

\begin{theorem*}
Let $\sigma$ be a separable snake of $n$ elements. For an appropriate explicit choice of the univariate polynomials $a_i(x)\in\mathbb{R}[x]$, define the polynomial $Q_x(y)\in\mathbb{R}[x][y]$,
$$Q_x(y):=\int_{0}^y\prod_{i=1}^n\big (t-a_i(x)\big )\mathrm{d}t.$$ 
Then for sufficiently small $x>0$, $Q_x(y)$ is a Morse polynomial and the Arnold snake associated to $Q_x(y)$ is the given snake $\sigma$.
\end{theorem*}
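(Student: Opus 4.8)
The plan is to exploit the recursive structure of separable permutations. A separable permutation is, by definition, one that can be built from singletons by iterated \emph{direct sums} $\oplus$ and \emph{skew sums} $\ominus$; equivalently, it is encoded by a binary tree (its \emph{separating tree}) whose leaves are the points of $\sigma$ and whose internal nodes are labelled $\oplus$ or $\ominus$. Since $\sigma$ is moreover a snake, I expect the relevant decomposition to respect the alternating (up-down) structure, so that each block appearing in the recursion is again an alternating permutation on a contiguous set of values. The construction of the polynomials $a_i(x)$ will mirror this tree: I will place the roots $a_i(x)$ of the derivative $Q_x'(y)=\prod_{i=1}^n\big(y-a_i(x)\big)$ in \emph{clusters at several scales in $x$}, one scale per level of the separating tree, so that zooming in at the $x$-rate attached to a given internal node reveals the sub-snake sitting below that node.

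First I would set up the formalism: recall the separable/skew-sum description of $\sigma$, fix its separating tree $T$, and to each node of $T$ assign a \emph{weight} (a power $x^{e}$ with $e$ increasing with depth) governing how tightly the corresponding block of critical points is packed. Concretely, the leaves of $T$ in left-to-right order are the critical points $a_1(x)<\cdots<a_n(x)$, and I would define $a_i(x)$ additively along the root-to-leaf path: starting from a coarse position, each time the path goes left or right at a node I add $\pm c\cdot x^{e}$ for constants $c$ and the node's exponent $e$. Because a direct sum $\oplus$ must produce critical \emph{values} in increasing blocks while a skew sum $\ominus$ produces them in decreasing blocks, the sign of the perturbation and the parity of the number of critical points to the left will have to be chosen so that $Q_x$ evaluated at $a_i(x)$ lands in the correct block. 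The second step is the key computation: show that for a cluster of roots of $Q_x'$ concentrated near a point $a^*$ at scale $x^e$, the critical values $Q_x(a_i(x))$ split as $Q_x(a^*)\ +\ (\text{lower-order main term})\ +\ o(\cdot)$, where the main term is itself, after rescaling, the vector of critical values of the polynomial built from the \emph{sub-tree} — this is a Rolle/Taylor estimate: on a cluster of width $\delta=x^e$ the factors coming from roots \emph{outside} the cluster are essentially constant (equal to their value at $a^*$, up to $1+O(\delta)$), so $Q_x'$ restricted to the cluster is $\big(\prod_{\text{outside}}(a^*-a_j)\big)\cdot\prod_{\text{inside}}(y-a_i)\cdot(1+O(\delta))$, and integrating gives the sub-polynomial's shape scaled by that product and by $\delta$.

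The induction then runs as follows: by the inductive hypothesis the two children-subtrees of the root already produce, for small $x$, Morse polynomials realising the two sub-snakes $\sigma_L$ and $\sigma_R$; I must check that gluing them at the root's scale $x^{e_{\mathrm{root}}}$ keeps \emph{all} $n$ critical values distinct and in the order dictated by $\oplus$ or $\ominus$. Distinctness within each child-block is the inductive hypothesis (robust under the $O(\delta)$ perturbation once $x$ is small enough); distinctness \emph{across} the two blocks is arranged by the choice of exponents — the left block's values occupy an interval of length $O(x^{e_L})$ around $Q_x(a^*_L)$, the right block's an interval of length $O(x^{e_R})$ around $Q_x(a^*_R)$, and since $e_{\mathrm{root}}<e_L,e_R$ the two enclosing intervals are themselves separated by a gap of order $x^{e_{\mathrm{root}}}$ in the $\oplus$ (resp.\ $\ominus$) order, so the blocks cannot interleave. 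One then peels off finitely many such constraints — one strict inequality per node of $T$ — and because there are finitely many of them, a single sufficiently small $x>0$ satisfies them all simultaneously; at that $x$, $Q_x$ is Morse (real simple critical points, distinct critical values) and its Arnold snake is exactly $\sigma$.

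The main obstacle I anticipate is the \emph{sign bookkeeping} in the second step: making the local model near each cluster reproduce a genuine \emph{up-down} sub-snake rather than its reverse or its complement. The leading constant $\prod_{\text{outside}}(a^*-a_j)$ can be positive or negative depending on how many roots lie to the right of the cluster, and multiplying a snake's critical-value vector by a negative number turns maxima into minima; similarly, whether an internal node should be $\oplus$ or $\ominus$ interacts with this sign. So the choice of the exponents $e$ (their ordering by depth is easy) is less delicate than the choice of which \emph{half} of the cluster to perturb and with which sign, and a clean statement of the inductive hypothesis — carrying not just "realises $\sigma_L$'' but "realises $\sigma_L$ with critical values in a prescribed tiny window and with a prescribed sign of the top coefficient'' — will be needed to make the induction close. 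I expect the separability hypothesis to be used precisely here and only here: it is what guarantees the blocks are contiguous both in position and in value, which is what lets the scale-separation argument go through; for a non-separable snake the blocks would have to interleave and no choice of scales could prevent it.
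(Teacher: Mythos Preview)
Your plan is essentially correct and follows the same architecture as the paper: identify a binary separating tree $T$ of $\sigma$, realise $T$ as the \emph{contact tree} of polynomials $a_1(x),\dots,a_n(x)$ (your ``clusters at several scales'' is exactly this --- the exponent $e$ at each node is the valuation decorating that bifurcation vertex), and then argue that the scale hierarchy forces the critical values $c_i(x)=Q_x(a_i(x))$ into the order prescribed by $\sigma$.

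Where you diverge is in the \emph{execution} of the comparison step. You propose an induction on the depth of $T$, with a Taylor-type estimate (``outside factors are essentially constant on a cluster'') and an inductive hypothesis that carries both the sub-snake and the sign of the leading coefficient. The paper instead works non-inductively: it writes $c_j-c_i=\sum_{\ell=i}^{j-1}(-1)^{n-\ell}S_\ell$ as an alternating sum of the areas $S_\ell=\big|\int_{a_\ell}^{a_{\ell+1}}P_x\big|$, proves a closed formula for $\nu_x(S_\ell)$ in terms of the contact tree (your cluster estimate in disguise), and then shows (Proposition~\ref{prop:valuationOfSumOfAreas}) that the \emph{single} term $S_k$ with $k$ corresponding to the meet $a_i\wedge a_j$ has strictly smallest valuation in this sum. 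Hence the sign of $c_j-c_i$ is simply the sign of $(-1)^{n-k}S_k$.

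This also dissolves the sign-bookkeeping obstacle you flag: since $\sigma$ is a snake, the $\oplus/\ominus$ labels on the internal vertices \emph{alternate} from left to right (Lemma~\ref{lemma:AlternDecompos}), and since $P_x$ is monic with simple real roots, the area signs $(-1)^{n-k}$ alternate in the same pattern with the same rightmost sign. So $\mathrm{sign}(\sigma(i)\wedge\sigma(j))=\oplus$ iff $(-1)^{n-k}>0$ iff $c_j>c_i$, with no need to thread signs through an induction. Your inductive route would work, but the paper's valuation argument is shorter and sidesteps exactly the difficulty you anticipated.
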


What motivates us to construct effectively polynomials in one variable with preassigned critical values configurations is the fact that Davis, Arnold and Douady only proved the existence of such polynomials.

Note that separable permutations were introduced in \cite{BBL}. They are the permutations that do not \enquote{contain} (\cite[page 13, 18]{Gh1}) either of the following two \enquote{patterns}:
 $$\begin{pmatrix}
    1 & 2 & 3& 4 \\
    3 &1& 4& 2
  \end{pmatrix} \text{ or } \begin{pmatrix}
    1 & 2 & 3& 4 \\
    2 &4& 1& 3
  \end{pmatrix}.$$ 
For more information see for instance \cite{BBFGP} or \cite{Ki}. The most recent characterisation of separable permutations is given in \cite{Gh1}, where Ghys describes them in terms of the relative positions of univariate polynomials that change when the polynomials cross a common zero at the origin.

In the next sections we start by introducing in detail the main tools necessary for our construction: Arnold snakes, the contact tree of a finite set of univariate polynomials, and separable permutations. The last section of the paper is dedicated to the main result and its proof.

The context of this work is at the intersection between singularity theory and real algebraic geometry. Interest in the study of real algebraic curves dates back to the works of Harnack, Klein, and Hilbert (\cite{Har1}, \cite{Kl1}, \cite{Hil1}, \cite{Vi1}). In addition, recent considerable progress has been made in this subject, see for instance the results of Ghys (\cite{Gh1}), Itenberg (\cite{II1}), and Sturmfels (\cite{Stu1}).

\section*{Acknowledgements}
This manuscript is based on the first chapter of my PhD thesis (see \cite{So1}), defended at \href{http://math.univ-lille1.fr/}{\textit{Paul Painlevé Laboratory}}. I am very grateful to my PhD advisors, \href{http://math.univ-lille1.fr/~bodin/index.html}{Arnaud Bodin} and \href{http://math.univ-lille1.fr/~popescu/}{Patrick Popescu-Pampu}, for all their guidance throughout this work. During my PhD thesis, my research was financially supported by \href{http://math.univ-lille1.fr/~cempi/}{\textit{Labex CEMPI}} and by \href{http://www.hautsdefrance.fr/}{\textit{the Hauts-de-France Region}}. I would like to express my gratitude to these institutions for their support. The reviewers of my PhD thesis were \href{http://ergarcia.webs.ull.es/}{Evelia R. Garc\' ia Barroso} and \href{https://webusers.imj-prg.fr/~ilia.itenberg/}{Ilia Itenberg}. The president of the committee was \href{http://perso.ens-lyon.fr/ghys/accueil/}{\' Etienne Ghys}.

\section{Arnold snakes}

\begin{definition}\label{def:alternatingSeq}
Let $A:=[a_1,a_2,\ldots,a_{n}]$ be a finite sequence of pairwise distinct real numbers, where $n\geq 1.$ We say that $A$ is \defi{an alternating sequence} if one of the following conditions holds:
 
\begin{enumerate}
\item $a_1>a_2<a_3>a_4<\ldots;$
\item $a_1<a_2>a_3<a_4>\ldots;$
\end{enumerate}

\end{definition}

Let us present a related notion which was used by Arnold (see \cite{Ar1}, \cite{CN}):
\begin{definition}\label{def:snake}
Consider a permutation $\sigma:\{1,2,\ldots,{n}\}\rightarrow \{1,2,\ldots,{n}\}.$ We say that $\sigma$ is \defi{an $n$-snake} if $[\sigma(1),\sigma(2),\ldots,\sigma(n)]$ is an alternating sequence in the sense of Definition \ref{def:alternatingSeq}.
\end{definition}

\begin{remark*}\label{rem:DrawSnake}
For a given snake $\sigma:=\begin{pmatrix}
    1 & 2 &\ldots& n \\
    \sigma(1) & \sigma(2) &\ldots& \sigma(n)
  \end{pmatrix}
  $, consider the set  $\{(i,\sigma(i))\mid i=1,\ldots,n\}$ of points in the real plane. Then for each $i=1,\ldots,n-1$ connect the consecutive points $(i,\sigma(i))$ and $(i+1,\sigma(i+1))$ by an edge. This is the graphic representation of snakes that we use throughout this paper (see for instance Figure \ref{Fig:AltTOSnake}).
\end{remark*}

Note that Andr\'e (\cite{An2}, \cite{An1}) and Stanley (\cite{St}) were also interested in snakes and they were calling them \enquote{alternating permutations}.

\begin{notation}
Denote by $\mathcal{A}_{n}$ the set of alternating sequences of $n$ real numbers and by $\mathcal{S}_{n}$ the set of $n$-snakes.
\end{notation}

Our goal will be to associate snakes to Morse polynomials via the alternating sequence of their critical values.

\begin{definition}\label{def:varphi}
Let $A:=[a_1,\ldots,a_{n}]\in\mathcal{A}_{n}$ be an alternating sequence of $n$ distinct real numbers $a_i\in\mathbb{R}$. Consider a second sequence $A'$ obtained by reordering the elements of $A$ in a strictly increasing way. Define the \textbf{rank of $a_i$}, denoted $\mathrm{rk}(a_i)$, to be the position of $a_i$ in this strictly increasing sequence $A'$. 
The \textbf{snake $\varphi_A$ of $A$} is defined by 
$$\varphi_A(i):=\mathrm{rk}(a_i).$$ Now we can define the surjective function $\varphi:\mathcal{A}_{n}\rightarrow \mathcal{S}_{n}$ as follows: $\varphi(A):=\varphi_A$.
\end{definition}

\begin{example}\label{Ex:AltTOsnake}
Consider the following alternating sequence: $[3>1<4>-1<6].$ See Figure \ref{Fig:AltTOSnake}. 

\begin{figure}[H]
\begin{center}
\begin{tikzpicture}[scale=0.4]
\draw[->] (-1.8,0) -- (6.8,0) node[below] {$x$};
      \draw[->] (0,-1.8) -- (0,6.8) node[left] {$y$};
\tikzstyle{vertex} = [draw,red]
\tikzstyle{edge} = [draw,dashed]

\coordinate (A) at (1,3);
\coordinate (B) at (2,1);
\coordinate (C) at (3,4);
\coordinate (M) at (4,-1);
\coordinate (N) at (5,6);
\coordinate (O) at (0,-0.3);

\coordinate (D) at (1,0);
\coordinate (E) at (2,0);
\coordinate (F) at (3,0);
\coordinate (P) at (4,0);
\coordinate (Q) at (5,0);
\coordinate (W) at (0,6);

\coordinate (Z) at (0,-1);
\coordinate (G) at (0,1);
\coordinate (H) at (0,2);
\coordinate (I) at (0,3);
\coordinate (R) at (0,4);
\coordinate (S) at (0,5);

\node at (Z) [left] {$-1$};
\node at (O) [left] {$O$};
\node at (D) [below] {$1$};
\node at (E) [below] {$2$};
\node at (F) [below] {$3$};
\node at (P) [above] {$4$};
\node at (Q) [below] {$5$};
\node at (W) [left] {$6$};

\node at (G) [left] {$1$};
\node at (H) [left] {$2$};
\node at (I) [left] {$3$};
\node at (R) [left] {$4$};
\node at (S) [left] {$5$};

\fill[vertex] (A) circle(4pt);
\fill[vertex] (B) circle(4pt);
\fill[vertex] (C) circle(4pt);
\fill[vertex] (M) circle(4pt);
\fill[vertex] (N) circle(4pt);

\tikzstyle{vertex} = [draw,black]
\fill[vertex] (D) circle(2pt);
\fill[vertex] (E) circle(2pt);
\fill[vertex] (F) circle(2pt);
\fill[vertex] (P) circle(2pt);
\fill[vertex] (Q) circle(2pt);

\fill[vertex] (G) circle(2pt);
\fill[vertex] (H) circle(2pt);
\fill[vertex] (I) circle(2pt);
\fill[vertex] (R) circle(2pt);
\fill[vertex] (S) circle(2pt);
\fill[vertex] (Z) circle(2pt);
\fill[vertex] (W) circle(2pt);

\draw[edge] (B)--(G);
\draw[edge] (B)--(E);

\draw[edge] (D)--(A);
\draw[edge] (I)--(A);

\draw[edge] (F)--(C);
\draw[edge] (R)--(C);

\draw[edge] (P)--(M);
\draw[edge] (Z)--(M);

\draw[edge] (Q)--(N);
\draw[edge] (W)--(N);

\tikzstyle{edge} = [draw, red, thick]
\draw[edge] (B)--(A);
\draw[edge] (B)--(C);
\draw[edge] (C)--(M);
\draw[edge] (M)--(N);

\begin{scope}[xshift=10cm]
\draw[->] (-1.8,0) -- (6.8,0) node[below] {$i$};
      \draw[->] (0,-1.8) -- (0,6.8) node[left] {$\sigma(i)$};
\tikzstyle{vertex} = [draw,blue]
\tikzstyle{edge} = [draw,dashed]
\coordinate (A) at (1,3);
\coordinate (B) at (2,2);
\coordinate (C) at (3,4);
\coordinate (M) at (4,1);
\coordinate (N) at (5,5);
\coordinate (O) at (0,-0.3);

\coordinate (D) at (1,0);
\coordinate (E) at (2,0);
\coordinate (F) at (3,0);
\coordinate (P) at (4,0);
\coordinate (Q) at (5,0);

\coordinate (G) at (0,1);
\coordinate (H) at (0,2);
\coordinate (I) at (0,3);
\coordinate (R) at (0,4);
\coordinate (S) at (0,5);
\node at (O) [left] {$O$};
\node at (D) [below] {$1$};
\node at (E) [below] {$2$};
\node at (F) [below] {$3$};
\node at (P) [below] {$4$};
\node at (Q) [below] {$5$};

\node at (G) [left] {$1$};
\node at (H) [left] {$2$};
\node at (I) [left] {$3$};
\node at (R) [left] {$4$};
\node at (S) [left] {$5$};

\fill[vertex] (A) circle(4pt);
\fill[vertex] (B) circle(4pt);
\fill[vertex] (C) circle(4pt);
\fill[vertex] (M) circle(4pt);
\fill[vertex] (N) circle(4pt);

\tikzstyle{vertex} = [draw,black]
\fill[vertex] (D) circle(2pt);
\fill[vertex] (E) circle(2pt);
\fill[vertex] (F) circle(2pt);
\fill[vertex] (P) circle(2pt);
\fill[vertex] (Q) circle(2pt);

\fill[vertex] (G) circle(2pt);
\fill[vertex] (H) circle(2pt);
\fill[vertex] (I) circle(2pt);
\fill[vertex] (R) circle(2pt);
\fill[vertex] (S) circle(2pt);

\draw[edge] (B)--(E);
\draw[edge] (B)--(H);

\draw[edge] (D)--(A);
\draw[edge] (I)--(A);

\draw[edge] (F)--(C);
\draw[edge] (R)--(C);

\draw[edge] (P)--(M);
\draw[edge] (G)--(M);

\draw[edge] (Q)--(N);
\draw[edge] (S)--(N);

\tikzstyle{edge} = [draw, blue, thick]
\draw[edge] (B)--(A);
\draw[edge] (B)--(C);
\draw[edge] (C)--(M);
\draw[edge] (M)--(N);
\end{scope}

\end{tikzpicture}
\end{center}
\caption{From an alternating sequence (left) to its snake $\sigma$ (right), as in Example \ref{Ex:AltTOsnake}.
\label{Fig:AltTOSnake}}
\end{figure}
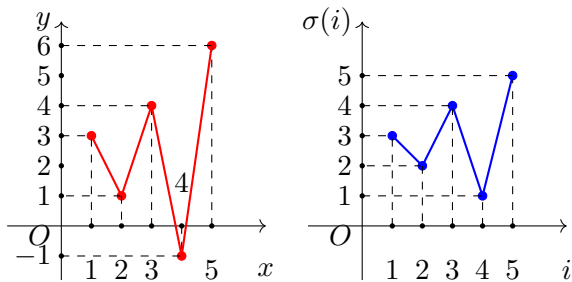

Thus the set of the real numbers to be ordered is $\{3,1,4,-1,6\}.$ We obtain $\mathrm{rk}(3)=3,$ $\mathrm{rk}(1)=2$, $\mathrm{rk}(4)=4$, $\mathrm{rk}(-1)=1,$ $\mathrm{rk}(6)=5$. 
Hence, the $5$-snake associated to $[3>1<4>-1<6]$ is 
$\sigma:=\begin{pmatrix}
    1 & 2 & 3 & 4 & 5\\
    3 & 2 & 4 & 1 & 5
  \end{pmatrix}
  $.   
\end{example}

\begin{definition}\cite[page 64]{La}\label{def:MorsePolyn}
Let us fix $n\geq 1$. We say that a polynomial $P:\bR\rightarrow\bR$ is an $n$-\defi{Morse polynomial} if it satisfies the following conditions:
\begin{enumerate}
\item $\deg P=n$;
\item $P$ is monic, i.e. the leading coefficient of $P$ is equal to $1$;
\item its critical points (i.e. the values $x_i\in\mathbb{C}$ such that $P'(x_i)=0$) are distinct (i.e. simple) and real;
\item its critical values (i.e. the values $P(x_i)$, where $x_i$ is a critical point) are all distinct.
\end{enumerate}
\end{definition}

\begin{proposition}\label{prop:MinMaxAlt}
Between any two consecutive local maxima (respectively minima) of a polynomial function $P:\mathbb{R}\rightarrow \mathbb{R}$ there exists a unique local minimum (respectively maximum) of $P$. In other words, the minima and maxima alternate. 
\end{proposition}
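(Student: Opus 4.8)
The plan is to argue purely by compactness and the extreme value theorem, using only that a non-constant real polynomial is continuous and that its local extrema are automatically \emph{strict}: a polynomial constant on an open interval is constant everywhere, and at a local extremum $c$ the nonzero polynomial $P(x)-P(c)$ vanishes to some finite order $m\geq 1$, with $P(x)-P(c)\sim a_m(x-c)^m$ near $c$; one-sided nonnegativity (or nonpositivity) forces $m$ even and $a_m\neq 0$, so in fact $P(x)-P(c)$ has constant strict sign on a punctured neighbourhood of $c$. Thus we may assume $P$ is non-constant (otherwise the statement is vacuous) and treat "local maximum/minimum" as strict.

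For \textbf{existence}, let $a<b$ be two consecutive local maxima of $P$, i.e. $P$ has no local maximum in $(a,b)$. Restrict $P$ to the compact interval $[a,b]$; by the extreme value theorem it attains its minimum over $[a,b]$ at some point $c$. Since $a$ is a strict local maximum, $P(x)<P(a)$ for $x$ slightly larger than $a$, and similarly $P(x)<P(b)$ for $x$ slightly smaller than $b$; hence the minimum is not attained at an endpoint, so $c\in(a,b)$. Being an interior minimiser over $[a,b]$, the point $c$ is a local minimum of $P$ on all of $\mathbb{R}$, which gives at least one local minimum between $a$ and $b$.

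For \textbf{uniqueness}, suppose there were two distinct local minima $c_1<c_2$ in $(a,b)$. Apply the same compactness argument to $P$ restricted to $[c_1,c_2]$: its maximum over $[c_1,c_2]$ is attained at some $d$, and because $c_1,c_2$ are strict local minima the maximum cannot be at the endpoints, so $d\in(c_1,c_2)\subset(a,b)$ and $d$ is a local maximum of $P$. This contradicts $a$ and $b$ being consecutive local maxima, so the local minimum between them is unique. The symmetric statement (unique local maximum between two consecutive local minima) follows by applying what we proved to $-P$. Finally, the assertion that the local extrema of $P$, listed by increasing abscissa, alternate between minima and maxima is immediate: two consecutive extrema of the same type would, by the existence part or its mirror, force an extremum of the opposite type strictly between them, contradicting consecutiveness.

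The only real obstacle is bookkeeping rather than mathematics: being careful that "local extremum" is understood in the strict sense and justifying that this costs nothing for polynomials, and making sure the word "consecutive" is the hypothesis actually used to close the uniqueness step. No property of polynomials beyond continuity and the strictness of extrema is needed, so the argument would in fact apply to any continuous function all of whose local extrema are strict.
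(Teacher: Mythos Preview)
Your proof is correct. The paper itself leaves this proposition to the reader and provides no argument, so there is nothing to compare against; your compactness/extreme-value approach, together with the observation that local extrema of a non-constant polynomial are automatically strict, is exactly the kind of elementary argument intended here.
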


We leave the proof of Proposition \ref{prop:MinMaxAlt} to the reader. 

The following definition is similar to the one given in \cite[pages 66-67]{La}.
\begin{definition}\label{def:snakeOfP}
Let $P:\bR\rightarrow\bR$ be an $n$-Morse polynomial. The $(n-1)$-snake associated to the alternating sequence of the critical values of $P$ (see Definition \ref{def:varphi}) is called \defi{the Arnold snake associated to the polynomial} $P$.
\end{definition}

\begin{remark*}
Since $P$ is monic, the last critical value is a local minimum both in the case where $P$ has an even degree $n$, and in the case where $P$ has an odd degree. Nonetheless, if we also consider non-monic polynomials, for instance with the leading term being $-x^{n},$ the last critical value could be a local maximum. Without any loss of generality, throughout the paper we focus only on monic polynomials.
\end{remark*}

\section{The contact tree}
A very important role in our construction will be played by a combinatorial object called the contact tree, associated to a finite set of univariate polynomials.
\subsection{Standard vocabulary for graphs and trees}

Let us first introduce briefly standard terminology we use for graphs and trees. The reader who is already familiar with these notions is invited to read directly Subsection \ref{subs:notations}. 

Intuitively, a graph represents a collection of vertices and of edges such that each edge connects two vertices. For a detailed description of graphs and for the basic terminology, the reader is invited to refer to \cite{Go}, which represents the main source for the standard notions we introduce and use in the paper. Other helpful sources are for example \cite{Di} or \cite{PP1}.

\begin{definition}\label{DefGraph}\cite{Di,Go}
A \defi{graph} $G=(V,E,\varphi)$ consists of a non-empty finite set of \defi{vertices} $V$, a finite set of \defi{edges} $E$ and an incidence function $\varphi$ that maps edges to unordered pairs of vertices. If the two vertices in the unordered pair are the same vertex, then the edge is called a \defi{loop}. The \defi{valency} of a vertex $v$ is the number of edges of $v$.
\end{definition}

We will be particularly interested in the following class of simple graphs:
\begin{definition}\label{DefTree}\cite{Go} 
\defi{A tree} is a connected graph without cycles. A \defi{rooted tree} is a tree endowed with a base vertex, called its  \defi{root}.
\end{definition}

As a consequence of Definition \ref{DefTree}, trees have no loops, nor multiple edges. Theorem \ref{th:charactTree} below can be seen as a characterisation of trees.

\begin{theorem}\label{th:charactTree}\cite{Go}
A connected simple graph is a tree if and only if it contains a unique path between any two vertices.
\end{theorem}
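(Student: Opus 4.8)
The plan is to prove the two implications separately, using only the definitions of \emph{connected}, \emph{cycle}, \emph{tree} and \emph{path} recalled above.

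For the forward direction, assume $G$ is a tree, i.e.\ connected and without cycles. Connectedness already guarantees that between any two vertices $u$ and $v$ there exists \emph{at least} one path, so the only thing to establish is uniqueness. I would argue by contradiction: suppose $P_1$ and $P_2$ are two distinct paths from $u$ to $v$. Reading both paths starting from $u$, let $w$ be the last vertex up to which they agree (possibly $w=u$) and let $w'$ be the first vertex they share after $w$ (this exists because both paths terminate at the common vertex $v$). The portion of $P_1$ strictly between $w$ and $w'$ and the portion of $P_2$ strictly between $w$ and $w'$ are internally disjoint and together with the vertices $w,w'$ they form a closed walk that visits no vertex twice except $w=w'$ at the ends, hence a cycle in $G$. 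This contradicts the hypothesis that $G$ is a tree, so the path must be unique.

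For the converse, assume $G$ is connected and that there is a unique path between any two of its vertices; I must show $G$ has no cycle. Again by contradiction, suppose $G$ contains a cycle $v_1 v_2 \cdots v_k v_1$ with $k \geq 3$ (recall that, $G$ being simple, there are neither loops nor multiple edges, so a cycle has length at least $3$). Then the single edge $v_1 v_2$ is a path from $v_1$ to $v_2$, and $v_1 v_k v_{k-1} \cdots v_2$ is another path from $v_1$ to $v_2$; these two paths are distinct since the second has length $k-1 \geq 2$. This contradicts uniqueness, so $G$ is acyclic, and being connected it is by definition a tree.

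The argument is entirely elementary; the only place requiring a little care is the extraction of a genuine cycle (no repeated vertices) from two distinct paths in the forward direction, which is why I would phrase it via the first and last points of agreement rather than by a naive concatenation of the two paths. One may instead cite \cite{Di,Go} for this standard fact, but I prefer to include the short self-contained argument above for completeness.
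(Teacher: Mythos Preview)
Your argument is correct and is the standard textbook proof of this characterisation. Note, however, that the paper does not actually prove this theorem: it is simply stated with a citation to \cite{Go} and used as background, so there is no ``paper's own proof'' to compare against. Your self-contained argument is a fine replacement for the bare citation.
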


\begin{definition}\cite{Se}
A \defi{geodesic} between two vertices of a tree is the shortest path between the two vertices.
\end{definition}

In this subsection, our convention is to draw the root on the top of the tree.
\begin{example}
A geodesic is shown in Figure \ref{Fig:geodesic} below.
\begin{figure}[H]
\begin{center}
\begin{tikzpicture}[scale=0.6,sibling distance=5em,
  every node/.style = {shape=circle, fill=black,
    draw, align=center}]]
  \node [shape=rectangle,fill=white] {R}
child {  node  {}
    child { node {} }
    child { node [color=green] {}
      child [color=green]{ node [color=green, text=black] {$V_i$}}
        child [color=green]{ node[color=green] {} 
        child [color=green] { node [color=green, text=black]{$V_k$} }
        child  [color=black] { node {} } }}};
\end{tikzpicture}
\end{center}
\caption{Geodesic in a tree, between the vertices $V_i$ and $V_k$.\label{Fig:geodesic}}
\end{figure}
\end{example}

\begin{definition}\label{def:treeVerticesETC}\cite[page 669]{Go}
In a rooted tree, the length (i.e. the number of edges) of the geodesic from the root to a vertex is called the \defi{level} of the vertex. The root is at level zero. A vertex $p$ of a rooted tree is called the \defi{parent} of a vertex $c$ if $p$ and $c$ are adjacent and the level of $c$ is one plus the level of $p$. We say then also that $c$ is a \defi{child} of $p$. We call \defi{ancestor} of a vertex $c$ every vertex on the path from $c$ to the root (excluding $c$ but including the root). The vertex $c$ is said to be a \defi{descendant} of each of its ancestors. In a rooted tree, a vertex is called a \defi{leaf} if it has no children. A vertex that is not a leaf, i.e. it has children, is said to be an \defi{internal vertex}. Let $v$ be a vertex of the rooted tree $\mathcal{T}$. The \defi{subtree of} $\mathcal{T}$ \defi{rooted at} $v$ is the tree consisting of all the descendants of $v$ in $\mathcal{T},$ including $v$ itself. 
\end{definition}

\begin{remark*}
The following definition is inspired from \cite[Section 3]{Tr} and \cite[page 624]{Go}. Note the difference between the notion of planar tree (can be embedded in a real plane) and plane tree (it is embedded in a real plane).
\end{remark*}

\begin{definition}
A \defi{plane tree} is a tree which is embedded in a real oriented plane (i.e. a topological surface which is oriented and homeomorphic to $\mathbb{R}^2$) without edge crossings.
\end{definition}

\begin{remark*}
One can also give the following abstract characterisation of plane trees: plane trees are abstract ordered trees, in the sense that each vertex has a cyclic order of the edges adjacent to it (see \cite[page 308]{Kn}). Any embedding of a rooted tree in an oriented plane gives the order of children for each vertex: from left to right, for instance (see \cite{wi-ord}). Note that, to give an abstract order on a rooted tree is the same as cyclically ordering the set of children of the root and totally ordering the set of children of any other vertex. 
\end{remark*}

\begin{example}
Two different embeddings of the same abstract rooted tree in the real plane
are shown in Figure \ref{fig:embeddingTrees}. 

\begin{figure}[H]
\begin{center}
\begin{tikzpicture}[scale=0.5,sibling distance=5em,
  every node/.style = {shape=circle, fill=black,
    draw, align=center}]]
    \begin{scope}
 \node [shape=rectangle,fill=white] {R}
child { node {}
    child { node {} }
    child { node {}
      child { node {}
        child { node {} }
        child { node {} }
        child { node {} } }
      child { node {} } }};
      \end{scope}
      \begin{scope}[xshift=7cm]
      \node [shape=rectangle,fill=white] {R}
child { node {}
    child { node {} }
    child { node {}
    child { node {} }
      child { node {}
        child { node {} }
        child { node {} }
        child { node {} } }
       }};
      \end{scope}
\end{tikzpicture}
\end{center}
\caption{Two different embeddings of the same abstract rooted tree.\label{fig:embeddingTrees}}
\end{figure}
\end{example}

\begin{definition}\label{def:equivEmbedd}\cite[page 410]{AF}
Let $\mathcal{T}_1$ and $\mathcal{T}_2$ be two embeddings of the same tree in a real oriented plane $\mathcal{P}$. We say that the two embeddings are \defi{equivalent} if there exists an orientation preserving homeomorphism $\phi$ from $\mathcal{P}$ to $\mathcal{P}$ such that $\phi(\mathcal{T}_1)=\mathcal{T}_2$ and the image of the root of $\mathcal{T}_1$ is the root of $\mathcal{T}_2$.
\end{definition}

Definition \ref{DefcompleteTree} is inspired from \cite{Go}, \cite{St}, and \cite{St1}.

\begin{definition}\label{DefcompleteTree}
A \defi{binary tree} is a rooted tree in which every vertex has at most two children. A plane binary tree is called \defi{complete} if the root and each of the internal vertices has exactly two children.
\end{definition}

\begin{definition}\cite[Definition 3.6]{GBGPPP}\label{Def:rootedCompleteTree}
An \defi{end-rooted rooted tree} is a rooted tree whose root has exactly one neighbour.
\end{definition}

\subsection{Notations}\label{subs:notations}
In this subsection we will set up some useful notations.
\begin{definition}
Let $P\in\mathbb{R}[x]$ and $Q\in\mathbb{R}[x]$ be two polynomials. We say that \defi{the polynomial} $P$ \defi{is smaller than the polynomial} $Q$ \defi{to the right}, denoted by $P\prec_{+} Q$, if and only if one has $P(x)<Q(x)$ for any sufficiently small $0<x\ll 1$, that is to say  there exists $x_0>0$ such that for any $0<x<x_0$, one has $P(x)<Q(x)$.
\end{definition}

Let us consider the monic polynomial $P_x(y)\in\mathbb{R}[x][y],$ such that 
\begin{equation}\label{eq:P}
 P_x(y):=\displaystyle \prod_{i=1}^{m+1}(y-a_i(x))\in \mathbb{R}[x][y],
 \end{equation} 
 where $m\in \mathbb{N}$, the roots $a_i(x)\in\mathbb{R}[x]$ 
 for any $i=1,\ldots,m+1$ and
\begin{equation}\label{eq:rootsIneq}
  a_1(x)\prec_{+} a_2(x)\prec_{+} \ldots \prec_{+} a_{m+1}(x).
  \end{equation} 

Denote by 

\begin{equation}\label{eq:A}
\mathcal{A}:=\{a_1(x),\ldots,a_{m+1}(x)\}
\end{equation}

the set of roots of $P_x(y)$ (see Figure \ref{fig:1}).

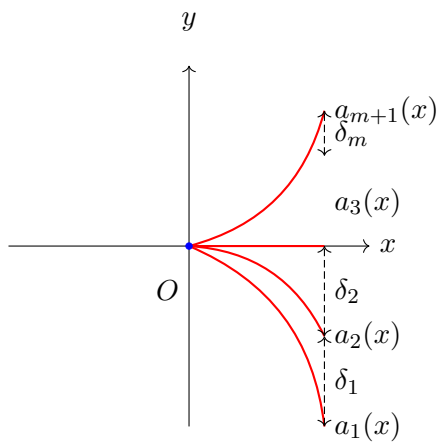
\begin{figure}[H]
\begin{center}
  \begin{tikzpicture}[scale=0.6]


\tikzstyle{vertex} = [draw,blue]
\tikzstyle{edge} = [draw,red,thick]

\coordinate (A) at (0,0);
\coordinate (B) at (3,-4);
\coordinate (C) at (3,0);
\coordinate (D) at (3,3);
\coordinate (E) at (3,-2);

\begin{scope}
      \draw[->] (-4,0) -- (4,0) node[right] {$x$};
      \draw[->] (0,-4) -- (0,4) node[above] {$y$};
      \draw[edge] (A)to[bend left] (B);
      \draw[edge] (A)to[bend left] (E);
      \draw[edge] (A)--(C);
      \draw[edge] (A)to[bend right] (D);
 \draw[<->,line width=0.15mm,dashed,dash pattern=on 1mm off 0.5mm] (3,-4)--(3,-2) 
        node[midway,right] {$\delta_1$};

\draw[<->,line width=0.15mm,dashed,dash pattern=on 1mm off 0.5mm] (3,-2)--(3,-0) 
        node[midway,right] {$\delta_2$};

\draw[<->,line width=0.15mm,dashed,dash pattern=on 1mm off 0.5mm] (3,2)--(3,3) 
        node[midway,right] {$\delta_m$};
\end{scope}

\fill[vertex] (A) circle(2pt);
\node at (A) [below left] {$O$};

\node at (B) [right] {$a_1(x)$};

\node at (E) [right] {$a_2(x)$};

\node at (C) [above right] {$a_3(x)$};

\node at (D) [right] {$a_{m+1}(x)$};

\end{tikzpicture}
  \end{center}
  \caption{The roots of $P_x(y),$ ordered.\label{fig:1}}
\end{figure}

In the sequel, we will consider only polynomials $a_i(x)$ such that  their valuation $\nu_x(a_i(x))\geq 1$, i.e. $a_i(0)=0,$ for any $i=1,\ldots,m+1$. 

\begin{definition}
For any $i=1,\ldots,m$, the difference $$\delta_i(x):=a_{i+1}(x)-a_{i}(x)$$
is called the \defi{gap} between $a_{i+1}(x)$ and $a_{i}(x)$.
\end{definition}

\begin{remark*}
Note that by the initial hypothesis (\ref{eq:rootsIneq}), one has $0\prec_+\delta_i(x),$ for any $i=1,\ldots,m.$
\end{remark*}
\begin{definition}
For any $i=1,\ldots,m,$ the number $$\mathrm{S}_i(x):=\left |  \int_{a_{i}(x)}^{a_{i+1}(x)}P_x(t) \mathrm{d}t \right |$$ is called \defi{the $i$-th area}.
\end{definition}
 
Remember that our goal is to construct Morse polynomials with given Arnold snakes. Roughly speaking, we will do this by controlling the configuration of areas $\mathrm{S}_i(x)$ (see Figure \ref{fig:3} below).

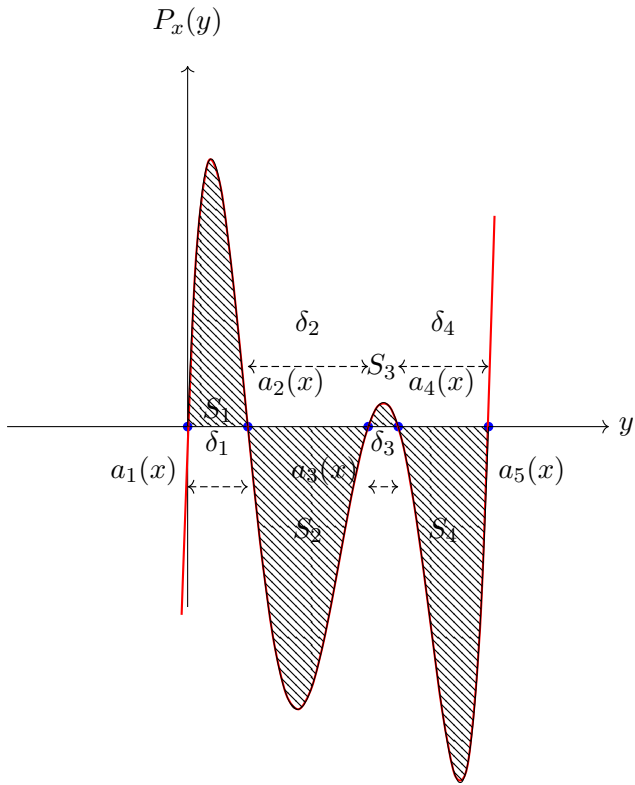
\begin{figure}[H]
\begin{center}
  \begin{tikzpicture}[scale=0.8]


\tikzstyle{vertex} = [draw,blue]
\tikzstyle{edge} = [draw,red,thick]

\begin{scope}
\draw[<->,line width=0.15mm,dashed,dash pattern=on 1mm off 0.5mm] (0,-1)--(1,-1) 
        node[midway,above] {$\delta_1$};

\draw[<->,line width=0.15mm,dashed,dash pattern=on 1mm off 0.5mm] (1,1)--(3,1) 
        node[midway,above] {$\delta_2$};

\draw[<->,line width=0.15mm,dashed,dash pattern=on 1mm off 0.5mm] (3,-1)--(3.5,-1) 
        node[midway,above] {$\delta_3$};
        
\draw[<->,line width=0.15mm,dashed,dash pattern=on 1mm off 0.5mm] (3.5,1)--(5,1) 
        node[midway,above] {$\delta_4$};
\end{scope}

\coordinate (A) at (0,0);
\coordinate (B) at (1,0);
\coordinate (C) at (3,0);
\coordinate (D) at (3.5,0);
\coordinate (E) at (5,0);

\coordinate (X) at (0.5,1);
\coordinate (Y) at (2,-1);
\coordinate (Z) at (2.8,1);
\coordinate (T) at (4.25,-1);

\coordinate (F) at (5.8,2);


\fill[vertex] (A) circle(2pt);
\node at (A) [below left] {$a_1(x)$};

\fill[vertex] (B) circle(2pt);
\node at (B) [above right] {$a_2(x)$};

\fill[vertex] (C) circle(2pt);
\node at (C) [below left] {$a_3(x)$};

\fill[vertex] (D) circle(2pt);
\node at (D) [above right] {$a_4(x)$};

\fill[vertex] (E) circle(2pt);
\node at (E) [below right] {$a_5(x)$};

\node at (X) [below] {$S_1$};
\node at (Y) [below] {$S_2$};
\node at (Z)  {$S_3$};
\node at (T) [below] {$S_4$};


\begin{scope}
      \draw[->] (-3,0) -- (7,0) node[right] {$y$};
      \draw[->] (0,-3) -- (0,6) node[above] {$P_x(y)$};
      \draw[scale=1,domain=-0.1:5.1,
      smooth,variable=\x,thick,red] plot ({\x},{0.5*\x*(\x-1)*(\x-3)*(\x-3.5)*(\x-5)});
      \draw[pattern=north west lines,scale=1,domain=0:1,smooth,variable=\x] plot ({\x},{0.5*\x*(\x-1)*(\x-3)*(\x-3.5)*(\x-5)});

\draw[pattern=north west lines,scale=1,domain=1:3,smooth,variable=\x] plot ({\x},{0.5*\x*(\x-1)*(\x-3)*(\x-3.5)*(\x-5)});

\draw[pattern=north west lines,scale=1,domain=3:3.5,smooth,variable=\x] plot ({\x},{0.5*\x*(\x-1)*(\x-3)*(\x-3.5)*(\x-5)});

\draw[pattern=north west lines,scale=1,domain=3.5:5,smooth,variable=\x] plot ({\x},{0.5*\x*(\x-1)*(\x-3)*(\x-3.5)*(\x-5)});
\end{scope}

\end{tikzpicture}
  \end{center}
  \caption{A given configuration of areas $\mathrm{S}_i(x)$.\label{fig:3}}
\end{figure}

\begin{definition}
Let us consider a sufficiently small $0<x\ll 1$. From now on, $e_i$ stands for the valuation of $\delta_i(x),$ namely $$e_i:=\nu_x(\delta_i(x))\in \mathbb{N}.$$ 
\end{definition}

We will sometimes denote shortly $\mathrm{S}_i$ instead of $\mathrm{S}_i(x)$, and $a_i$ instead of $a_i(x)$.

\subsection{Definition of the contact tree}
The combinatorial object described in what follows plays a key role in our construction of snakes.

In order to study the type of contact between the polynomials $a_i(x)$ (i.e. the roots of the polynomial $P_x(y)$) for a sufficiently small $0<x\ll 1$, we define a tree with numerical information attached. In the sequel we give a constructive definition of this object, called \emph{the contact tree}.

\begin{remark*}
The notion of contact tree is inspired from \emph{the Eggers-Wall tree} (see \cite[Section 4.3]{GBGPPP}, \cite[page 75, Section 4.2]{Wa}). Our definition of the contact tree agrees with the one given by Ghys in \cite{Gh2}, \cite[pages 27-28]{Gh1}, and with the one in \cite[Section 3, Definition 3.5, pages 131-132]{Ka}, where Kapranov called it \emph{the Bruhat-Tits tree}. 
\end{remark*}

Let us first start with an example that makes Definition \ref{def:CT} easier to understand.

\begin{example}
Given two polynomials $$a_1:=m_0 x^0 + m_1 x^1 +\ldots + m_k x^k +\ldots $$ and $$a_{2}:=n_0 x^0 + n_1 x^1 +\ldots + n_k x^k +\ldots,$$ where $m_i=n_i,$ for $i=1,\ldots,k-1$ and $m_k < n_k$, let us construct their contact tree (see Figure \ref{fig:Contractie} below). The relation $m_k < n_k$ gives the order of the two polynomials: $a_1\prec_+ a_2.$

\begin{figure}[H]
\begin{center}
\tikzset{cross/.style={cross out, draw=black, fill=none, minimum size=2*(#1-\pgflinewidth), inner sep=0pt, outer sep=0pt}, cross/.default={2pt}}
\begin{tikzpicture}[scale=0.9]

\tikzstyle{vertex} = [draw,blue]
\tikzstyle{edge} = [draw,red,thick]

\coordinate (A) at (0,0);
\coordinate (B) at (1,0);
\coordinate (C) at (5,0);
\coordinate (D) at (6,0);
\coordinate (E) at (0,1);
\coordinate (F) at (1,1);
\coordinate (G) at (5,1);
\coordinate (H) at (6,1);
\coordinate (FF) at (1,1.2);
\coordinate (GG) at (5,1.2);
\coordinate (HH) at (6,1.2);
\coordinate (I) at (0,2);
\coordinate (J) at (1,2);
\coordinate (K) at (5,2);
\coordinate (L) at (6,2);
\coordinate (JJ) at (1,1.8);
\coordinate (KK) at (5,1.8);
\coordinate (LL) at (6,1.8);

\fill[vertex] (A) circle(2pt);
\node at (A) [below left] {$x^0$};

\fill[vertex] (B) circle(2pt);
\node at (B) [below] {$x^1$};

\fill[vertex] (C) circle(2pt);
\node at (C) [below] {$x^{k-1}$};

\fill[vertex] (D) circle(2pt);
\node at (D) [below] {$x^k$};

\fill[vertex] (E) circle(2pt);
\node at (E) [ left] {$m_0$};

\fill[vertex] (F) circle(2pt);
\node at (FF) [below] {$m_1$};

\fill[vertex] (G) circle(2pt);
\node at (GG) [below] {$m_{k-1}$};

\fill[vertex] (H) circle(2pt);
\node at (HH) [below] {$m_{k}$};

\fill[vertex] (I) circle(2pt);
\node at (I) [ left] {$n_{0}$};

\fill[vertex] (J) circle(2pt);
\node at (JJ) [above] {$n_{1}$};

\fill[vertex] (K) circle(2pt);
\node at (KK) [above] {$n_{k-1}$};

\fill[vertex] (L) circle(2pt);
\node at (LL) [above] {$n_k$};

\fill [pattern=vertical lines] (0,1) rectangle (6,2);

\begin{scope}
      \draw[->] (-3,0) -- (7,0) node[right] {$x$};
      \draw[->] (0,-1) -- (0,4.2) node[left] {$y$};
       \draw[->, color=red] (0,1) -- (7,1) node[right] {$a_1$};
       \draw[->, color=red] (0,2) -- (7,2) node[right] {$a_2$};
        \node at (0,-0.3) [above left] {$O$};
\end{scope}

\begin{scope}[yshift=-5]
      \draw[->] (-3,-4) -- (7,-4) node[right] {$x$};
      \draw[->] (0,-5) -- (0,-1) node[left] {$y$};
       \draw[ color=red] (0,-3) -- (6,-3) ;
        \draw[->, color=red] (6,-3) -- (8,-2)node[right] {$a_2$} ;
         \draw[->, color=red] (6,-3) -- (8,-4) node[right] {$a_1$};
         \fill[vertex] (6,-3) circle(2pt);
        \node at (6,-3) [above] {$k$};
        \node at (0,-4.3) [above left] {$O$};
\node[cross] at (0,-3){}; 
\end{scope}
\end{tikzpicture}

\caption{Construction of the contact tree of two polynomials.\label{fig:Contractie}}
\end{center}
\end{figure}

The construction consists of identifying the points of the real plane up to the point where the contact of the corresponding polynomials  ends and the respective coefficients of the two polynomials are starting to differ.

More precisely, if we consider the $xOy$ coordinate system in the real plane, then let each polynomial $a_i(x)$, $i=1,2$ be represented by the semi-straight line $(y=i)\cap (x>0)$ in the first quadrant $\mathbb{R}_+^2$. The line is called the $a_i$-axis. 

The point of coordinate $(\ell,0)$ on the $Ox$ axis corresponds to the monomial $x^\ell$. On each $a_i$-axis a point $(\ell,i)$ is decorated with the coefficient corresponding to the monomial $x^\ell,$ as it appears in the polynomial $a_i(x).$ 

In this example, the valuation $\nu_x(a_{2}(x)-a_{1}(x))=k$. Let us now identify the points of the first quadrant situated between $a_{2}(x)$ and $a_{1}(x)$, up to the point $x=k$, where the two polynomials separate.

The vertices of the tree are: the root, the bifurcation vertex and the two leaves. By construction, the contact tree is rooted and embedded in the real plane.

\end{example}

We are now ready to define the contact tree of a set of univariate polynomials:

\begin{definition}\label{def:CT}
Given the polynomials $a_i(x)\in \mathcal{A}$ (see notation \ref{eq:A}) such that $a_i(0)=0,$ for any $i=1,\ldots,m+1$, we construct \defi{the contact tree} associated to all the polynomials in $\mathcal{A}$ as follows. 

Let us consider the $xOy$ coordinate system in the real plane. In the first quadrant $\mathbb{R}_+^2$, let each polynomial $a_i(x)$ be represented by the semi-straight line $(y=i)\cap (x>0)$. We call it the $a_i$-axis. Recall that, by hypothesis, the polynomials are ordered: $a_1\prec_{+} a_2\prec_{+} \ldots \prec_{+} a_{m+1}.$

The point of coordinate $(k,0)$ on the $Ox$ axis corresponds to the monomial $x^k$. On each $a_i$-axis a point $(k,i)$ is decorated with the coefficient corresponding to the monomial $x^k,$ as it appears in the polynomial $a_i(x).$ 

Note that for each $i=1,\ldots,m$, the valuation $\nu_x(a_{i+1}(x)-a_{i}(x))$ measures the contact between the polynomials $a_{i+1}(x)$ and $a_{i}(x)$. In particular, it is the exponent of maximal contact between $a_{i+1}(x)$ and $a_{i}(x)$. 

For each $i$ let us identify the points of the first quadrant situated between $a_{i+1}(x)$ and $a_{i}(x)$, up to the point where the two polynomials separate. Formally, for each $i=1,\ldots,m$ let us consider the equivalence relation:
$(x_1,y_1) \sim (x_2,y_2) \Leftrightarrow x_1=x_2 $ and $ x_1, x_2 \leq \nu_x(a_{i+1}(x)-a_{i}(x)),$ for any two points $(x_1,y_1)$ and $(x_2,y_2)$ with $i\leq y_1,y_2\leq i+1.$ 

Now we are able to construct inductively the contact tree, by applying the equivalence relation described above, for all $i=1,\ldots,m$. We denote by $\mathcal{CT}(\mathcal{A})$ the quotient given by this equivalence relation and let us call $\Lambda:\mathbb{R}_+^2\rightarrow \mathcal{CT}(\mathcal{A})$ the quotient map. By construction, the quotient $\mathcal{CT}(\mathcal{A})$ is embedded in the real plane. This is due to the fact that the orientation of the contact tree, both horizontally and vertically, is induced by the orientation of the real line $\mathbb{R}$, since the polynomials $a_i(x)$ are  ordered increasingly as horizontal semi-straight lines with their ends on the $Oy$-axis, and the valuations of their corresponding monomials increase along the $Ox$-axis.

In addition, one has $\Lambda(0,0)\equiv \Lambda(0,i),$ for any $i.$ Therefore $\mathcal{CT}(\mathcal{A})$ is a rooted tree. Its root is $\Lambda(0,0).$ 

We call \defi{bifurcation vertices of the contact tree}, the vertices whose valency is greater or equal to $3$. 
 We preserve as numerical decoration only the values of the valuation function in its bifurcation vertices $\nu_x(a_{i+1}(x)-a_{i}(x))$, for any $i=1,\ldots,m.$ Namely, each bifurcation vertex of the contact tree is the point where the contact of the corresponding polynomials $a_i(x)$ and $a_{i+1}(x)$ ends and the respective coefficients of the two polynomials are starting to differ.  For a bifurcation vertex $v$, we shall denote this by $\nu_x(v):=\nu_x(a_{i+1}(x)-a_{i}(x)).$

The vertices of the contact tree are: the bifurcation vertices, the leaves and the root.

The leaves, i.e. the extremities of the maximal geodesics going from the root, are decorated with arrows and are in a bijective correspondence with the polynomials $a_i(x)\in\mathcal{A}$. In other words, each leaf of the tree is decorated with and denoted by its corresponding polynomial $a_i(x).$ For all $i$, denote by $\mathcal{G}_i$ the geodesic from the root to $a_i(x)$. 

The decorating valuations associated to the bifurcation vertices are, by construction, increasing along any geodesic which goes from the root towards the leaves.

By construction, the contact tree is rooted and embedded in the real plane. This induces an orientation on its leaves: its leaves are totally ordered, and the order is given by the vertical order on the $Oy$-axis.
\end{definition}

\begin{example}
Let us give an example of a non binary contact tree: $$\mathcal{CT}(0,x^1,x^1+x^2,\frac{3}{2}x^1+x^2,\frac{3}{2}x^1+x^2+x^3),$$ presented in Figure \ref{Fig:NongenericCtcTree} below. Namely, given $a_1(x)=0,$ $a_2(x)=x^1,$ $a_3(x)=x^1+x^2,$ $a_4(x)=\frac{3}{2}x^1+x^2,$ $a_5(x)=\frac{3}{2}x^1+x^2+x^3,$ there exist two gaps $\delta_1(x)$ and $\delta_3(x)$, such that $e_1=e_3=1.$ In this situation the vertex $a_1\wedge a_2$ coincides with the vertex $a_3\wedge a_4$ and has valency 4.
\end{example}

\begin{figure}[H]
\begin{center}
\tikzset{cross/.style={cross out, draw=black, fill=none, minimum size=2*(#1-\pgflinewidth), inner sep=0pt, outer sep=0pt}, cross/.default={2pt}}
\begin{tikzpicture}[scale=0.6]

\tikzstyle{vertex} = [draw,blue]
\tikzstyle{edge} = [draw,red,thick]
\draw[->] (-3,-6) -- (7,-6) node[right] {$x$};
\draw[->] (0,-7) -- (0,-1) node[left] {$y$};
\node at (0,-6.3) [above left] {$O$};
\draw[ color=red] (0,-5) -- (1,-5) ;
\node[cross] at (0,-5){}; 

\fill[vertex] (1,-5) circle(6pt);
       \node at (1,-4.7) [below,color=blue] {$1$};
\draw[->, color=red] (1,-5) -- (7,-5)node[right] {$a_1$} ;
\draw[ color=red] (1,-5) -- (2,-4.5) ;
\fill[vertex] (2,-4.5) circle(2pt);
       \node at (2,-4) [below,color=blue] {$2$};
       \draw[->, color=red] (2,-4.5) -- (7,-4)node[right] {$a_2$} ;
       \draw[->, color=red] (2,-4.5) -- (7,-3)node[right] {$a_3$} ;
       
       \draw[ color=red] (1,-5) -- (3,-3) ;
       \fill[vertex] (3,-3) circle(2pt);
       \node at (3,-3.5) [above,color=blue] {$3$};
       
       \draw[->, color=red] (3,-3) -- (7,-2)node[right] {$a_4$} ;
       \draw[->, color=red] (3,-3) -- (7,-1)node[right] {$a_5$} ;

\end{tikzpicture}
 \caption{Example of a non binary contact tree, where the vertex $a_1\wedge a_2$ is the same as the vertex $a_3\wedge a_4$. \label{Fig:NongenericCtcTree}}
 \end{center}
\end{figure}

\subsection{Properties of the contact tree}

By construction, the contact tree is a rooted tree, embedded in the real plane, whose leaves are labelled by the polynomials $a_i(x),$ for $i=1,\ldots,m+1$.

The property of being a tree gives the contact tree also the structure of a lower semi-lattice. 

\begin{definition}\cite[page 13]{Vic}
A \defi{partially ordered set} (called also \defi{poset}) is a set endowed with a binary relation that is reflexive, transitive and antisymmetric.
\end{definition}

\begin{definition}\cite[page 14]{Vic}
Let us consider a partially ordered set $S$ and a subset $A\subseteq S.$ Take $b\in S$. We say that $b$ is a \defi{greatest lower bound} for the subset $A$, if the following conditions hold:
\begin{enumerate}
\item the element $b$ is a lower bound for $A$;
\item any other lower bound $c$ for $A$ has the property: $c\leq b.$
\end{enumerate}

\end{definition}

\begin{definition}\cite[page 38]{Vic}
A \defi{lower semi-lattice} is a partially ordered set whose every finite subset has a greatest lower bound.
\end{definition}

\begin{definition}\label{def:ComparatieInContactTree}
Let us consider two vertices $V_1$ and $V_2$ of a rooted tree $\mathcal{T}$. We denote by $\mathcal{G}_1$ the geodesic from the root of $\mathcal{T}$ to $V_1$ and by $\mathcal{G}_2$ the geodesic from the root of $\mathcal{T}$ to $V_2$. If $\mathcal{G}_1\subseteq \mathcal{G}_2$, then we say that $V_1 \leq_{\mathcal{T}} V_2$. In other words, the smallest vertex is the one closer to the root on the geodesic $\mathcal{G}_2.$

If $V_1 \leq_{\mathcal{T}} V_2$ or $V_2 \leq_{\mathcal{T}} V_1$, then we say that $V_1$ and $V_2$ are \defi{comparable vertices} of the tree $\mathcal{T}$. If neither of the two cases mentioned above is true, then one says that $V_1$ and $V_2$ are \defi{not comparable}.
\end{definition}

\begin{proposition}\label{Prop:POSET}
The contact tree is a lower semi-lattice.
\end{proposition}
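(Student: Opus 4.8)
The plan is to exhibit, for the contact tree $\mathcal{CT}(\mathcal{A})$ equipped with the order $\leq_{\mathcal{T}}$ of Definition \ref{def:ComparatieInContactTree}, a greatest lower bound for every finite subset. First I would check that $\leq_{\mathcal{T}}$ is indeed a partial order: reflexivity is immediate since $\mathcal{G}_v\subseteq \mathcal{G}_v$; transitivity follows from the transitivity of inclusion of geodesics; and antisymmetry follows from Theorem \ref{th:charactTree}, since in a tree there is a unique path between two vertices, so $\mathcal{G}_{V_1}\subseteq\mathcal{G}_{V_2}$ and $\mathcal{G}_{V_2}\subseteq\mathcal{G}_{V_1}$ force $\mathcal{G}_{V_1}=\mathcal{G}_{V_2}$ and hence $V_1=V_2$ (the endpoint of a geodesic from the root determines the vertex). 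This shows $(\mathcal{CT}(\mathcal{A}),\leq_{\mathcal{T}})$ is a poset.

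Next, since every finite subset of a poset has a greatest lower bound as soon as every \emph{two-element} subset does (by induction on cardinality, using associativity of the binary meet, and noting the root is a lower bound for the empty set and for the whole tree), it suffices to treat two vertices $V_1,V_2$. The key step is: the geodesics $\mathcal{G}_1$ from the root to $V_1$ and $\mathcal{G}_2$ from the root to $V_2$ are both paths starting at the root, and in a tree two such paths share a common initial segment and then diverge, never to meet again — otherwise one would create a cycle, contradicting Definition \ref{DefTree}. Let $V_1\wedge V_2$ be the last vertex common to $\mathcal{G}_1$ and $\mathcal{G}_2$ (this is well defined: the root is common, and the set of common vertices, being an initial segment of each geodesic, has a last element). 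Then $\mathcal{G}_{V_1\wedge V_2}$ is the common initial segment, so $\mathcal{G}_{V_1\wedge V_2}\subseteq \mathcal{G}_1$ and $\mathcal{G}_{V_1\wedge V_2}\subseteq \mathcal{G}_2$, i.e. $V_1\wedge V_2$ is a lower bound of $\{V_1,V_2\}$. Conversely, if $W$ is any lower bound, then $\mathcal{G}_W\subseteq \mathcal{G}_1$ and $\mathcal{G}_W\subseteq \mathcal{G}_2$, so $\mathcal{G}_W$ is contained in the common initial segment $\mathcal{G}_{V_1\wedge V_2}$, whence $W\leq_{\mathcal{T}} V_1\wedge V_2$. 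Thus $V_1\wedge V_2$ is the greatest lower bound, and by induction every finite subset has one, so $\mathcal{CT}(\mathcal{A})$ is a lower semi-lattice.

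The main obstacle is not any single hard estimate — there is none — but making the "two geodesics from the root share an initial segment and then separate" claim rigorous from the tree axioms; I would phrase it cleanly as: concatenating $\mathcal{G}_1$ with the reverse of $\mathcal{G}_2$ must contain the unique path from $V_1$ to $V_2$, and comparing lengths/uniqueness forces the overlap structure. I would also remark that nothing here uses the metric decorations $\nu_x(v)$ on the bifurcation vertices; the statement is purely about the underlying rooted tree, so in fact Proposition \ref{Prop:POSET} holds for any rooted tree, with $V_1\wedge V_2$ the usual "meet" (deepest common ancestor or one of the two vertices themselves). This observation justifies writing $a_i\wedge a_j$ for the branch point of two leaves, notation already used informally around Figure \ref{Fig:NongenericCtcTree}.
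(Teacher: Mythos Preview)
Your argument is correct and takes exactly the route the paper does: both reduce the proposition to the general fact that any rooted tree, under the ancestor order of Definition~\ref{def:ComparatieInContactTree}, is a lower semi-lattice. The only difference is that you spell out the verification (the poset axioms and the construction of $V_1\wedge V_2$ as the last common vertex on the two geodesics from the root), whereas the paper's proof is a one-line appeal to this fact with references to \cite{Vic,LiGu}.
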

\begin{proof}
This follows directly from the fact that any rooted tree is a lower semi-lattice (see \cite{Vic,LiGu} for more details).
\end{proof}

We are thus allowed to introduce the following definition: 
\begin{definition}\label{def:meet}
We call the \defi{meet} (or greatest lower bound) of any two distinct vertices $v_i$ and $v_j$ $\in\mathbb{R}[x]$, with $i\neq j$, denoted by $v_i\wedge v_j$ the vertex where the geodesic from the root to $v_i$ and the geodesic from the root to $v_j$ separate in the contact tree. Namely, $v_i\wedge v_j$ is the furthest vertex from the root and it is the most recent common ancestor of $v_i$ and $v_j$. See Figure \ref{fig:5}.
\end{definition}

\begin{figure}[H]
\begin{center}
  \begin{tikzpicture}[scale=0.7,sibling distance=3em,
  every node/.style = {shape=circle, fill=black,
    draw, align=center}, level 1/.style={sibling distance=8em},
  level 2/.style={sibling distance=6em},
  level 3/.style={sibling distance=4em},
  level 4/.style={sibling distance=2em}]
  
   \node [shape=rectangle,fill=white] {R}
child[grow=right, color=red] {node [color=red, label=below:$v_i\wedge v_j$] {}
    child { node [ color=red,label=below :$v_j$]{} child[color=black] { node {} } child [color=black] { node {} }}
    child { node[color=red] {}
      child { node[ color=red,label=above:$v_i$] {}
        child [color=black]{ node {} }
        child [color=black]{ node {} child { node {} } child { node {} }}}
      child [color=black] { node {} } }};
     
\end{tikzpicture}
  \caption{The vertex $v_i\wedge v_j$ where the geodesics of $v_i$ and $v_j$ separate. \label{fig:5}}
  \end{center}
\end{figure}
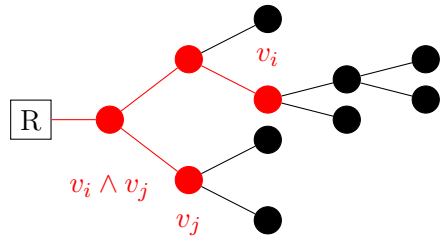

Since the meet in any semilattice is associative as a binary operation (see \cite[page 199]{SS}), one has $(a_i\wedge a_j) \wedge a_k=a_i\wedge (a_j \wedge a_k)$ and we usually skip the parentheses and use the simple notation $a_i\wedge a_j \wedge a_k$.

\begin{remark}\label{remark:ei}
The important thing to note here is the fact that by Definition \ref{def:meet} and the notations introduced before, the meet of two consecutive polynomials $a_{i+1}(x)$ and $a_i(x)\in\mathcal{A}$ has the property $$\nu_x(a_i\wedge a_{i+1})=\nu_x(a_{i+1}(x)-a_i(x))=\nu_x(\delta_i)=e_i.$$
\end{remark}

\begin{proposition}\label{prop:bij-surjection}
The map $(a_i, a_{i+1}) \mapsto a_i \wedge a_{i+1}$ is a  surjection from the set of pairs of consecutive leaves to the set of internal vertices of the contact tree.
 
\end{proposition}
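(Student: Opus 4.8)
The plan is to prove the two assertions (well-definedness as a map, and surjectivity) directly from the construction of the contact tree in Definition \ref{def:CT}, using the interpretation of $a_i\wedge a_{i+1}$ as the bifurcation vertex where the $a_i$-axis and the $a_{i+1}$-axis separate in the quotient $\mathcal{CT}(\mathcal{A})$. First I would check that the map is well defined with values in the internal vertices: by Remark \ref{remark:ei}, $\nu_x(a_i\wedge a_{i+1})=e_i=\nu_x(\delta_i)$ is finite because $0\prec_+\delta_i(x)$ forces $\delta_i(x)\not\equiv 0$, so the geodesics $\mathcal{G}_i$ and $\mathcal{G}_{i+1}$ genuinely separate at a vertex at finite level; this vertex has at least the two distinct children lying on $\mathcal{G}_i$ and $\mathcal{G}_{i+1}$ respectively (they are distinct precisely because separation occurs there), hence it is not a leaf, i.e. it is internal. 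This handles the codomain.

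Next I would prove surjectivity onto internal vertices. Let $v$ be an internal vertex of $\mathcal{CT}(\mathcal{A})$. Since the leaves are totally ordered by the vertical order on the $Oy$-axis and are in bijection with $a_1\prec_+\cdots\prec_+ a_{m+1}$, the subtree rooted at $v$ contains a set of consecutive leaves $a_p, a_{p+1},\ldots, a_q$ with $p<q$ (it contains at least two leaves because $v$ is internal, and the set of leaves below any vertex of a plane tree is an interval for the left-to-right order). Because $v$ is a bifurcation vertex, it has at least two children; pick an index $j$ with $p\le j<q$ such that $a_j$ and $a_{j+1}$ lie below \emph{different} children of $v$ (such a $j$ exists: otherwise all of $a_p,\ldots,a_q$ would lie below a single child of $v$, contradicting that $v$ is their most recent common ancestor). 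Then the geodesics $\mathcal{G}_j$ and $\mathcal{G}_{j+1}$ both pass through $v$ and diverge exactly at $v$ — they cannot diverge below $v$ since they already go to different children, and they cannot diverge strictly above $v$ since both leaves lie in the subtree rooted at $v$, so $v$ is on both geodesics. By Definition \ref{def:meet}, $a_j\wedge a_{j+1}=v$, and $(a_j,a_{j+1})$ is a pair of consecutive leaves mapping to $v$.

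The step I expect to be the main obstacle is the combinatorial claim that the leaves lying in the subtree rooted at a vertex $v$ form a contiguous block with respect to the left-to-right (vertical) order, and the closely related claim that one can always find a \emph{consecutive} pair $a_j, a_{j+1}$ straddling two distinct children of $v$. Both rest on the fact, asserted at the end of Definition \ref{def:CT}, that the contact tree is embedded in the real plane with its leaves totally ordered compatibly with the order $\prec_+$ on the polynomials $a_i(x)$; I would make this precise by noting that in a plane (ordered) rooted tree the descendant-leaf sets of the children of a fixed vertex $v$ partition the descendant-leaves of $v$ into consecutive intervals, in the order of the children, so two leaves in different child-intervals that are closest across the boundary are automatically consecutive in the global order. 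Once this planarity bookkeeping is in place, both directions of the statement follow formally, and I would also remark that the map is typically not injective — distinct consecutive pairs can share the same meet exactly when the corresponding bifurcation vertex has valency $>3$, as in Figure \ref{Fig:NongenericCtcTree} — which is why only surjectivity is claimed.
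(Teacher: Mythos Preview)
Your argument is correct and follows the same idea as the paper, but you have supplied considerably more detail: the paper's proof is a single sentence stating that ``by construction of the contact tree, for each internal vertex $v$ \ldots\ there is at least a pair of consecutive polynomials $(a_i,a_{i+1})$ such that the meet of $a_i$ and $a_{i+1}$ is $v$.'' Your planarity bookkeeping (descendant-leaves form a contiguous block, hence some consecutive pair straddles two children of $v$) is exactly what makes that sentence rigorous, and your remark on non-injectivity matches the paper's later observations.
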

\begin{proof}
By construction of the contact tree, for each internal vertex $v$, called also bifurcation vertex, there is at least a pair of consecutive polynomials $(a_i,a_{i+1}),$ such that the meet of $a_i$ and $a_{i+1}$ is $v$, where $i\in\{1,\ldots,m\}$.
\end{proof}

\begin{lemma}\label{lemma:wedge}
If three polynomials $a_i, a_{i+1}$ and $a_{i+2}\in\mathcal{A}$ (see Subsection \ref{subs:notations}, notation \ref{eq:A}), such that  $a_i\prec_{+} a_{i+1}\prec_{+} a_{i+2}$ are consecutive roots of the polynomial $P\in\mathbb{R}[x][y],$ i.e. three consecutive leaves in the contact tree $\mathcal{CT}(\mathcal{A})$, then $a_i\wedge a_{i+1} \wedge a_{i+2}$ is either $a_i\wedge a_{i+1}$ or $a_{i+1}\wedge a_{i+2}$.
In particular, one has:
$$\nu_x(a_i\wedge a_{i+1} \wedge a_{i+2})=\min(\nu_x(a_i\wedge a_{i+1}), \nu_x(a_{i+1}\wedge a_{i+2})).$$ 
\end{lemma}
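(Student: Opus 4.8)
The plan is to exploit the tree structure directly: the two \enquote{adjacent} meets $u:=a_i\wedge a_{i+1}$ and $v:=a_{i+1}\wedge a_{i+2}$ both lie on the geodesic $\mathcal{G}_{i+1}$ from the root to $a_{i+1}$, hence are comparable in $\leq_{\mathcal{T}}$, and I will show that $a_i\wedge a_{i+1}\wedge a_{i+2}$ is simply whichever of $u,v$ lies closer to the root. All of this takes place inside the lower semi-lattice provided by Proposition \ref{Prop:POSET}, so greatest lower bounds exist and the meet notation of Definition \ref{def:meet} is legitimate.

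First I would record that, by Definition \ref{def:meet}, $u=a_i\wedge a_{i+1}$ is the most recent common ancestor of $a_i$ and $a_{i+1}$; in particular $u$ is an ancestor of $a_{i+1}$, so $u$ lies on $\mathcal{G}_{i+1}$, i.e. $u\leq_{\mathcal{T}}a_{i+1}$. Likewise $v=a_{i+1}\wedge a_{i+2}$ lies on $\mathcal{G}_{i+1}$, so $v\leq_{\mathcal{T}}a_{i+1}$. Since $\mathcal{G}_{i+1}$ is a path (Theorem \ref{th:charactTree}), any two of its vertices are comparable in the sense of Definition \ref{def:ComparatieInContactTree}; hence $u$ and $v$ are comparable. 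By symmetry of the two cases, I assume $u\leq_{\mathcal{T}}v$ (the case $v\leq_{\mathcal{T}}u$ being identical with the roles of $i$ and $i+1$ exchanged).

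Next I would check that $u$ is the greatest lower bound of $\{a_i,a_{i+1},a_{i+2}\}$, so that $a_i\wedge a_{i+1}\wedge a_{i+2}=u=a_i\wedge a_{i+1}$. It is a lower bound: $u\leq_{\mathcal{T}}a_i$ and $u\leq_{\mathcal{T}}a_{i+1}$ hold by definition of $u$, and $u\leq_{\mathcal{T}}v\leq_{\mathcal{T}}a_{i+2}$ by transitivity. Conversely, if $w$ is any lower bound of the three leaves, then $w\leq_{\mathcal{T}}a_i$ and $w\leq_{\mathcal{T}}a_{i+1}$, hence $w\leq_{\mathcal{T}}a_i\wedge a_{i+1}=u$. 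So $u$ is the greatest lower bound, and the first assertion of the lemma follows; in the case $v\leq_{\mathcal{T}}u$ one gets $a_i\wedge a_{i+1}\wedge a_{i+2}=v=a_{i+1}\wedge a_{i+2}$.

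Finally, for the valuation identity I would use that $u$ and $v$ are bifurcation (internal) vertices by Proposition \ref{prop:bij-surjection}, so $\nu_x(u)$ and $\nu_x(v)$ are defined, and that the decorating valuations increase along any geodesic issued from the root (Definition \ref{def:CT}). Since $u$ and $v$ lie on $\mathcal{G}_{i+1}$ with $u\leq_{\mathcal{T}}v$, this gives $\nu_x(u)\leq\nu_x(v)$, whence
$$\nu_x(a_i\wedge a_{i+1}\wedge a_{i+2})=\nu_x(u)=\min(\nu_x(u),\nu_x(v))=\min(\nu_x(a_i\wedge a_{i+1}),\nu_x(a_{i+1}\wedge a_{i+2})),$$
and the case $v\leq_{\mathcal{T}}u$ is symmetric; combined with Remark \ref{remark:ei} this also reads $\nu_x(a_i\wedge a_{i+1}\wedge a_{i+2})=\min(e_i,e_{i+1})$. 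The only point that really needs care — the modest \enquote{main obstacle} — is the observation that $u$ and $v$ both lie on the single geodesic $\mathcal{G}_{i+1}$ and are therefore automatically comparable; once that is in place, the rest is a routine unwinding of the definition of greatest lower bound, and in particular no case analysis on the shape of the branching of the contact tree is needed.
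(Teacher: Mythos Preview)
Your argument is correct and is a cleaner route than the paper's. The paper proceeds by a pictorial case analysis: after recording the associativity inequalities $\nu_x(a_i\wedge a_{i+1}\wedge a_{i+2})\leq\nu_x(a_i\wedge a_{i+1})$ and $\nu_x(a_i\wedge a_{i+1}\wedge a_{i+2})\leq\nu_x(a_{i+1}\wedge a_{i+2})$, it enumerates the three possible local configurations of the three leaves (all three separating at one vertex; $a_i$ splitting off first; $a_{i+2}$ splitting off first) and rules out a fourth \enquote{diamond} configuration on the grounds that it would create a cycle, contradicting the tree axioms. In each admissible picture the conclusion is read off directly.

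Your approach sidesteps this enumeration entirely: the single observation that $u=a_i\wedge a_{i+1}$ and $v=a_{i+1}\wedge a_{i+2}$ are both ancestors of $a_{i+1}$, hence both lie on the geodesic $\mathcal{G}_{i+1}$ and are automatically comparable, replaces the case split. The verification that the $\leq_{\mathcal{T}}$-smaller of $u,v$ is the greatest lower bound of $\{a_i,a_{i+1},a_{i+2}\}$ is then a two-line semi-lattice argument, and the valuation identity follows from the monotonicity of $\nu_x$ along geodesics (Definition \ref{def:CT}). What you gain is a proof that is uniform (no pictures, no excluded configuration) and that makes transparent \emph{why} the triple meet must be one of the two adjacent pairwise meets; the paper's version is perhaps more visual but relies on inspecting figures. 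Note that your argument also handles the degenerate case $u=v$ (the first of the paper's three pictures) without singling it out.
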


\begin{proof}
Recall that the meet in any semilattice is associative (see \cite[page 199]{SS}) as a binary operation, that is to say $(a_i\wedge a_{i+1}) \wedge a_{i+2}=a_i\wedge (a_{i+1}\wedge a_{i+2})=a_i\wedge a_{i+1} \wedge a_{i+2}$. In addition, by definition of the meet $\wedge$, one has both 
\begin{equation}\label{eq:wedge1}
\nu_x(a_i\wedge a_{i+1} \wedge a_{i+2})=\nu_x((a_i\wedge a_{i+1}) \wedge a_{i+2}) \leq \nu_x(a_i\wedge a_{i+1})
\end{equation}
 and 
\begin{equation}\label{eq:wedge2}
\nu_x(a_i\wedge a_{i+1} \wedge a_{i+2})=\nu_x(a_i\wedge (a_{i+1} \wedge a_{i+2}))\leq  \nu_x(a_{i+1} \wedge a_{i+2}).
\end{equation} 

However, there are only the following three possibilities, shown in Figure \ref{fig:4.1} below.

\begin{figure}[H]
  \begin{minipage}{0.3\textwidth}
  \begin{tikzpicture}[scale=0.4]

\tikzstyle{vertex} = [draw,blue]
\tikzstyle{edge} = [draw,red,thick]

\coordinate (A) at (0,0);
\coordinate (B) at (2,2);
\coordinate (C) at (2,0);
\coordinate (D) at (2,-2);

\draw[edge] (A)--(B);
\draw[edge] (A)--(C);
\draw[edge] (A)--(D);

\fill[vertex] (A) circle(2pt);
\node at (A) [left] {$a_i\wedge a_{i+1}\wedge a_{i+2}$};

\fill[vertex] (B) circle(2pt);
\node at (B) [right] {$a_{i+2}$};

\fill[vertex] (C) circle(2pt);
\node at (C) [right] {$a_{i+1}$};

\fill[vertex] (D) circle(2pt);
\node at (D) [right] {$a_{i}$};

\end{tikzpicture}
\end{minipage}
\begin{minipage}{0.3\textwidth}
 \begin{tikzpicture}[scale=0.4]

\tikzstyle{vertex} = [draw,blue]
\tikzstyle{edge} = [draw,red,thick]

\begin{scope}[yshift=-7cm]

\coordinate (E) at (0,0);
\coordinate (F) at (2,2);
\coordinate (G) at (4,4);
\coordinate (H) at (4,0);
\coordinate (I) at (2,-2);

\draw[edge] (E)--(F);
\draw[edge] (E)--(I);
\draw[edge] (F)--(G);
\draw[edge] (F)--(H);

\fill[vertex] (E) circle(2pt);
\node at (E) [left] {$a_i\wedge a_{i+1}\wedge a_{i+2}$\\ $=a_i\wedge a_{i+1}$};

\fill[vertex] (F) circle(2pt);
\node at (F) [left] {$a_{i+1}\wedge a_{i+2}$};

\fill[vertex] (G) circle(2pt);
\node at (G) [right] {$a_{i+2}$};

\fill[vertex] (H) circle(2pt);
\node at (H) [right] {$a_{i+1}$};

\fill[vertex] (I) circle(2pt);
\node at (I) [right] {$a_{i}$};
\end{scope}

\end{tikzpicture}
\end{minipage}

\begin{minipage}{0.3\textwidth}
\begin{center}

  \begin{tikzpicture}[scale=0.4]

\tikzstyle{vertex} = [draw,blue]
\tikzstyle{edge} = [draw,red,thick]

\begin{scope}[yshift=-15cm]

\coordinate (J) at (0,2);
\coordinate (K) at (2,4);
\coordinate (L) at (2,0);
\coordinate (M) at (4,2);
\coordinate (N) at (4,-2);

\draw[edge] (J)--(K);
\draw[edge] (J)--(L);
\draw[edge] (L)--(M);
\draw[edge] (L)--(N);

\fill[vertex] (J) circle(2pt);
\node at (J) [left] {$a_i\wedge a_{i+1}\wedge a_{i+2}= a_{i+1}\wedge a_{i+2}$};

\fill[vertex] (K) circle(2pt);
\node at (K) [right] {$a_{i+2}$};

\fill[vertex] (L) circle(2pt);
\node at (L) [right] {$a_{i+1}\wedge a_i$};

\fill[vertex] (M) circle(2pt);
\node at (M) [right] {$a_{i+1}$};

\fill[vertex] (N) circle(2pt);
\node at (N) [right] {$a_i$};
\end{scope}

\end{tikzpicture}
\end{center}
\end{minipage}
  \caption{The only three possible configurations of $a_i\wedge a_{i+1}\wedge a_{i+2}$.\label{fig:4.1}}
\end{figure}
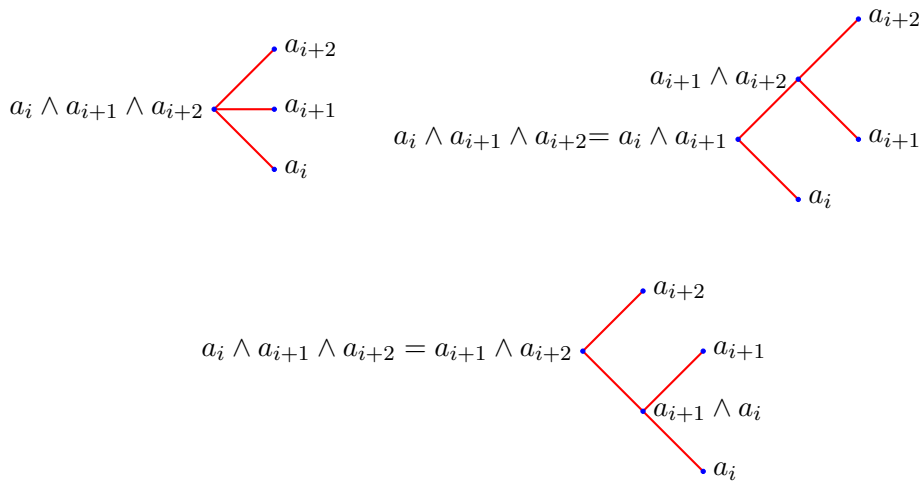

The conclusion is true for all the three situations above, except for the one presented in Figure \ref{fig:6} below. The case in Figure \ref{fig:6} could be taken into consideration if we were thinking in terms of graphs, but it is impossible in terms of trees, because it creates a cycle inside the contact tree, which is not permitted by Definition \ref{DefTree}. This concludes the proof.
\begin{figure}[H]
  \begin{tikzpicture}[scale=0.4]

\tikzstyle{vertex} = [draw,blue]
\tikzstyle{edge} = [draw,red,thick]

\coordinate (A) at (0,0);
\coordinate (B) at (2,2);
\coordinate (C) at (2,-2);
\coordinate (D) at (4,4);
\coordinate (E) at (4,0);
\coordinate (F) at (4,-4);

\draw[edge] (A)--(B);
\draw[edge] (A)--(C);
\draw[edge] (B)--(D);
\draw[edge] (B)--(E);
\draw[edge] (C)--(E);
\draw[edge] (C)--(F);

\fill[vertex] (A) circle(2pt);
\node at (A) [left] {$a_i\wedge a_{i+1}\wedge a_{i+2}$};

\fill[vertex] (B) circle(2pt);
\node at (B) [right] {$a_{i+1}\wedge a_{i+2}$};

\fill[vertex] (C) circle(2pt);
\node at (C) [right] {$a_i\wedge  a_{i+1}$};

\fill[vertex] (D) circle(2pt);
\node at (D) [right] {$a_{i+2}$};

\fill[vertex] (E) circle(2pt);
\node at (E) [right] {$a_{i+1}$};

\fill[vertex] (F) circle(2pt);
\node at (F) [right] {$a_{i}$};

\end{tikzpicture}
  \caption{Impossible configuration for $a_i\wedge a_{i+1}\wedge a_{i+2}$.  \label{fig:6}}
\end{figure}
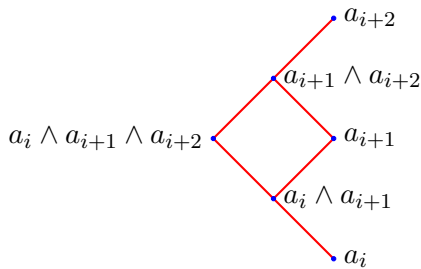

\end{proof}

\begin{corollary}\label{cor:p(3)}
If the polynomials $a_i, a_{i+1}$ and $a_{i+2}\in\mathcal{A}$, $a_i\prec_{+} a_{i+1}\prec_{+} a_{i+2}$ are consecutive roots of the polynomial $P_x(y)\in\mathbb{R}[x][y],$ i.e. three consecutive leaves in the contact tree $\mathcal{CT}(\mathcal{A})$, then one has the following equality: $a_i\wedge a_{i+1} \wedge a_{i+2}=a_i\wedge a_{i+2}.$ 
\end{corollary}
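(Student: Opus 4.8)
The plan is to obtain Corollary~\ref{cor:p(3)} as a direct consequence of Lemma~\ref{lemma:wedge} together with the fact that the contact tree is a genuine tree embedded in the plane, so that its leaves $a_1(x)\prec_{+}\cdots\prec_{+}a_{m+1}(x)$ are totally ordered. Concretely, I would first use associativity of the meet (\cite[page~199]{SS}) to write
$$a_i\wedge a_{i+1}\wedge a_{i+2}=(a_i\wedge a_{i+2})\wedge a_{i+1},$$
so that it suffices to prove that the leaf $a_{i+1}(x)$ is a descendant of the vertex $z:=a_i\wedge a_{i+2}$; indeed, once this is known one has $z\wedge a_{i+1}=z$, and the claimed equality $a_i\wedge a_{i+1}\wedge a_{i+2}=a_i\wedge a_{i+2}$ follows immediately.

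To prove that $a_{i+1}(x)$ descends from $z$, I would argue by the position of the leaves. By Definition~\ref{def:meet} the geodesics $\mathcal{G}_i$ and $\mathcal{G}_{i+2}$ coincide from the root up to $z$ and then leave $z$ along two distinct edges $e\neq e'$. Since the contact tree is embedded in the real plane, the leaves lying in the subtree hanging from $z$ along $e$ form a block of consecutive leaves for the total order $\prec_{+}$, and likewise for $e'$; as $a_i(x)$ lies in the first block, $a_{i+2}(x)$ in the second, and $a_{i+1}(x)$ sits between them in the order $\prec_{+}$, the leaf $a_{i+1}(x)$ must itself hang from $z$, which is what we wanted. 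Alternatively, and perhaps more transparently, I would simply revisit the three configurations of Figure~\ref{fig:4.1} already exhibited in the proof of Lemma~\ref{lemma:wedge}: in each of them one reads off at once that the vertex where $\mathcal{G}_i$ and $\mathcal{G}_{i+2}$ separate is exactly the top vertex $a_i\wedge a_{i+1}\wedge a_{i+2}$, and I would organise the write-up as this short case check.

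I do not expect a real obstacle here: the only slightly delicate point is the planarity argument of the previous paragraph, and it rests on precisely the same tree-theoretic fact (no cycles, leaves totally ordered by the embedding) that already excluded the configuration of Figure~\ref{fig:6} in the proof of Lemma~\ref{lemma:wedge}. For later use I would also record, combining the corollary with Remark~\ref{remark:ei} and Lemma~\ref{lemma:wedge}, the numerical consequence $\nu_x(a_i\wedge a_{i+2})=\min(e_i,e_{i+1})$.
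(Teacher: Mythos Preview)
Your proposal is correct and matches the paper's own treatment: the paper states Corollary~\ref{cor:p(3)} without proof, as an immediate consequence of Lemma~\ref{lemma:wedge}, and your suggested case-check on the three configurations of Figure~\ref{fig:4.1} is precisely how one reads it off. Your alternative planarity argument (showing $a_{i+1}$ must descend from $a_i\wedge a_{i+2}$ because the leaves form consecutive blocks) is a fine stand-alone justification but more than the paper provides.
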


\begin{corollary}\label{coroll:GeneralWedge}
Let us consider an arbitrary number of consecutive roots of the polynomial $P\in\mathbb{R}[x][y],$ denoted by $a_i, a_{i+1},\ldots, a_{j}$, $j>i,$. Therefore, $a_i, a_{i+1},\ldots, a_{j}$ are consecutive leaves of the contact tree $\mathcal{CT}(\{a_1, a_2, \ldots, a_{m+1}\})$. Then one has the following equality: 
$$\nu_x(a_i\wedge a_{i+1} \wedge\ldots\wedge a_{j})=\min(\nu_x(a_i\wedge a_{i+1}), \nu_x(a_{i+1}\wedge a_{i+2}),\ldots,\nu_x(a_{j-1}\wedge a_j)).$$ 
Moreover, one also has the following equality: 
$$a_i\wedge a_{i+1} \wedge\ldots\wedge a_{j}=a_i\wedge a_{j}.$$

\end{corollary}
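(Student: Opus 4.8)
The plan is to prove both assertions simultaneously by induction on $j-i\ge 1$. The case $j-i=1$ is vacuous ($a_i\wedge a_j=a_i\wedge a_{i+1}$, and the right-hand side of the valuation formula is the single number $\nu_x(a_i\wedge a_{i+1})$), and the case $j-i=2$ is exactly Lemma \ref{lemma:wedge} together with Corollary \ref{cor:p(3)}. These serve as the base of the induction.

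For the inductive step, assume $j-i\ge 3$ and that both equalities are already known for every block of consecutive leaves of strictly smaller length. Using the associativity and idempotency of the meet operation (see \cite[page 199]{SS}), I would first write
$$a_i\wedge a_{i+1}\wedge\cdots\wedge a_j=\bigl(a_i\wedge a_{i+1}\wedge\cdots\wedge a_{j-1}\bigr)\wedge\bigl(a_{j-1}\wedge a_j\bigr),$$
which is legitimate because the set of leaves occurring on the right is exactly $\{a_i,\ldots,a_j\}$. By the induction hypothesis applied to the block $a_i,\ldots,a_{j-1}$, the first factor equals $a_i\wedge a_{j-1}$ and has valuation $\min\bigl(\nu_x(a_i\wedge a_{i+1}),\ldots,\nu_x(a_{j-2}\wedge a_{j-1})\bigr)$. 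Now $a_i\wedge a_{j-1}$ and $a_{j-1}\wedge a_j$ both lie on the geodesic $\mathcal{G}_{j-1}$ from the root to $a_{j-1}$ — by Definition \ref{def:meet} the former is the vertex at which $\mathcal{G}_{j-1}$ separates from $\mathcal{G}_i$ and the latter the vertex at which it separates from $\mathcal{G}_j$ — so they are comparable in the sense of Definition \ref{def:ComparatieInContactTree}, and their meet is whichever of the two lies closer to the root. Since the decorating valuations increase along every geodesic issuing from the root (Definition \ref{def:CT}), that vertex is also the one of smaller valuation, whence
$$\nu_x\bigl(a_i\wedge\cdots\wedge a_j\bigr)=\min\bigl(\nu_x(a_i\wedge a_{j-1}),\,\nu_x(a_{j-1}\wedge a_j)\bigr).$$
Substituting the inductive value of $\nu_x(a_i\wedge a_{j-1})$ gives the first claimed equality.

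It remains to identify $a_i\wedge\cdots\wedge a_j$ with $a_i\wedge a_j$. One inclusion is automatic: $a_i\wedge\cdots\wedge a_j\leq_{\mathcal{T}}a_i\wedge a_j$, since the left-hand side is a lower bound already for the smaller set $\{a_i,a_j\}$. For the reverse it suffices to check that $v:=a_i\wedge a_j$ is a common ancestor of every leaf $a_k$ with $i\le k\le j$. Here I would invoke the planar structure of the contact tree from Definition \ref{def:CT}: the leaves are totally ordered by the vertical order on the $Oy$-axis, the children of any vertex inherit a compatible linear order, and the leaves lying in the subtree rooted at a given child form an interval for this order. Since $a_i$ and $a_j$ belong to two distinct child-subtrees of $v$ and $a_i<a_k<a_j$ in the leaf order, $a_k$ must lie in one of the (finitely many) child-subtrees of $v$ squeezed between those containing $a_i$ and $a_j$; in particular $a_k$ is a descendant of $v$. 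Hence $v\leq_{\mathcal{T}}a_i\wedge\cdots\wedge a_j$, and equality follows. Equivalently, one may first record this as a lemma — the analogue of Corollary \ref{cor:p(3)} for three not necessarily consecutive leaves $a_p<a_q<a_r$, namely $a_p\wedge a_q\wedge a_r=a_p\wedge a_r$, proved by the same planar reasoning that underlies Corollary \ref{cor:p(3)} — after which the present statement follows from the induction above by a purely formal manipulation of meets.

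I expect this last point to be the only genuinely non-routine step. The inductive reduction and the valuation formula use nothing beyond the lower-semilattice structure of the contact tree and the elementary fact that two vertices lying on a common geodesic from the root are comparable; but the identification with $a_i\wedge a_j$ really does require the embedding of the contact tree in the oriented plane and the total order it induces on the leaves — in contrast with Lemma \ref{lemma:wedge}, whose proof needed only the absence of cycles.
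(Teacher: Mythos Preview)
Your proof is correct and follows the same inductive scheme as the paper: reduce from a block of $n$ consecutive leaves to one of $n-1$ by peeling off the last leaf, and compare the two resulting meets. Your observation that $a_i\wedge a_{j-1}$ and $a_{j-1}\wedge a_j$ both lie on the geodesic $\mathcal{G}_{j-1}$ (hence are automatically comparable) is a slightly cleaner packaging of what the paper does by an explicit case split on which of the two is closer to the root. The one point where you go further than the paper is the second equality $a_i\wedge\cdots\wedge a_j=a_i\wedge a_j$: the paper states it but its written proof only tracks the valuation formula, whereas you supply an explicit argument via the planar ordering of the leaves, correctly noting that this is where the embedding of the contact tree is genuinely used.
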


\begin{example}
An example of the conclusions presented in Corollary \ref{coroll:GeneralWedge} can be seen in Figure \ref{Fig:ExGeneralWedge}.

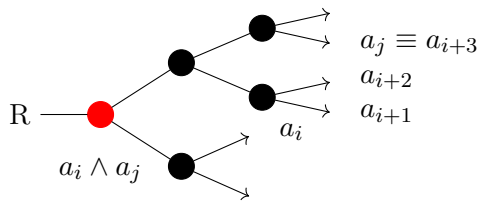
\begin{figure}[H]
  \begin{tikzpicture}[scale=0.6,sibling distance=3em,
   somenode/.style = {shape=circle, scale=0.3,fill=black,
    draw, align=center},
     level 1/.style={sibling distance=8em},
  level 2/.style={sibling distance=6em},
  level 3/.style={sibling distance=4em},
  level 4/.style={sibling distance=2em}]
    \node [shape=rectangle] {R}
child[grow=right] {node [somenode,color=red, label=below:$a_i\wedge a_j$] {}
    child { node [somenode]{} child[color=black] { node {} edge from parent[->]} child [color=black] { node [label=right:$a_{i}$]{} edge from parent[->] }}
    child { node[somenode] {}
      child { node[somenode] {}
        child [color=black]{ node [label=right:$a_{i+1}$]{} edge from parent[->]}
        child [color=black]{ node [label=right:$a_{i+2}$]{}edge from parent[->] }}
      child [color=black] { node [somenode]{} child { node (q9) [label=right:$a_j \equiv a_{i+3}$] {} edge from parent[->] } child { node {} edge from parent[->]}} }};
\end{tikzpicture}
\caption{The maximal contact between several consecutive roots.\label{Fig:ExGeneralWedge}}
\end{figure}
\end{example}

\begin{proof}
For the proof of Corollary \ref{coroll:GeneralWedge}, let us proceed by induction on the number, say $n-1$, of the leaves $a_i, a_{i+1},\ldots,a_{j}.$ Denote by $\mathcal{P}(k)$ the proposition \enquote{if $a_i, a_{i+1},\ldots, a_{j}$ are $k$ consecutive leaves in the contact tree $\mathcal{CT}(\mathcal{A})$, then  
$\nu_x(a_i\wedge a_{i+1} \wedge\ldots\wedge a_{j})=\min(\nu_x(a_i\wedge a_{i+1}), \nu_x(a_{i+1}\wedge a_{i+2}),\ldots,\nu_x(a_{j-1}\wedge a_j)).$}

By Corollary \ref{cor:p(3)}, the proposition $\mathcal{P}(3)$ is true.

Let us prove that if $\mathcal{P}(n-1)$ is true then $\mathcal{P}(n)$ is also true. There are two situations, presented in Figure \ref{fig:recurr1} and Figure \ref{fig:recurr2} below:
\begin{enumerate}
\item either $a_i\wedge \ldots \wedge a_j$ $\leq_{\mathcal{CT}}$ $a_j \wedge a_{j+1}$ and then by the hypothesis $\mathcal{P}(n-1)$ we conclude that $\mathcal{P}(n)$ is also true;
\begin{figure}[H] 
\centering
\includegraphics[scale=0.15]{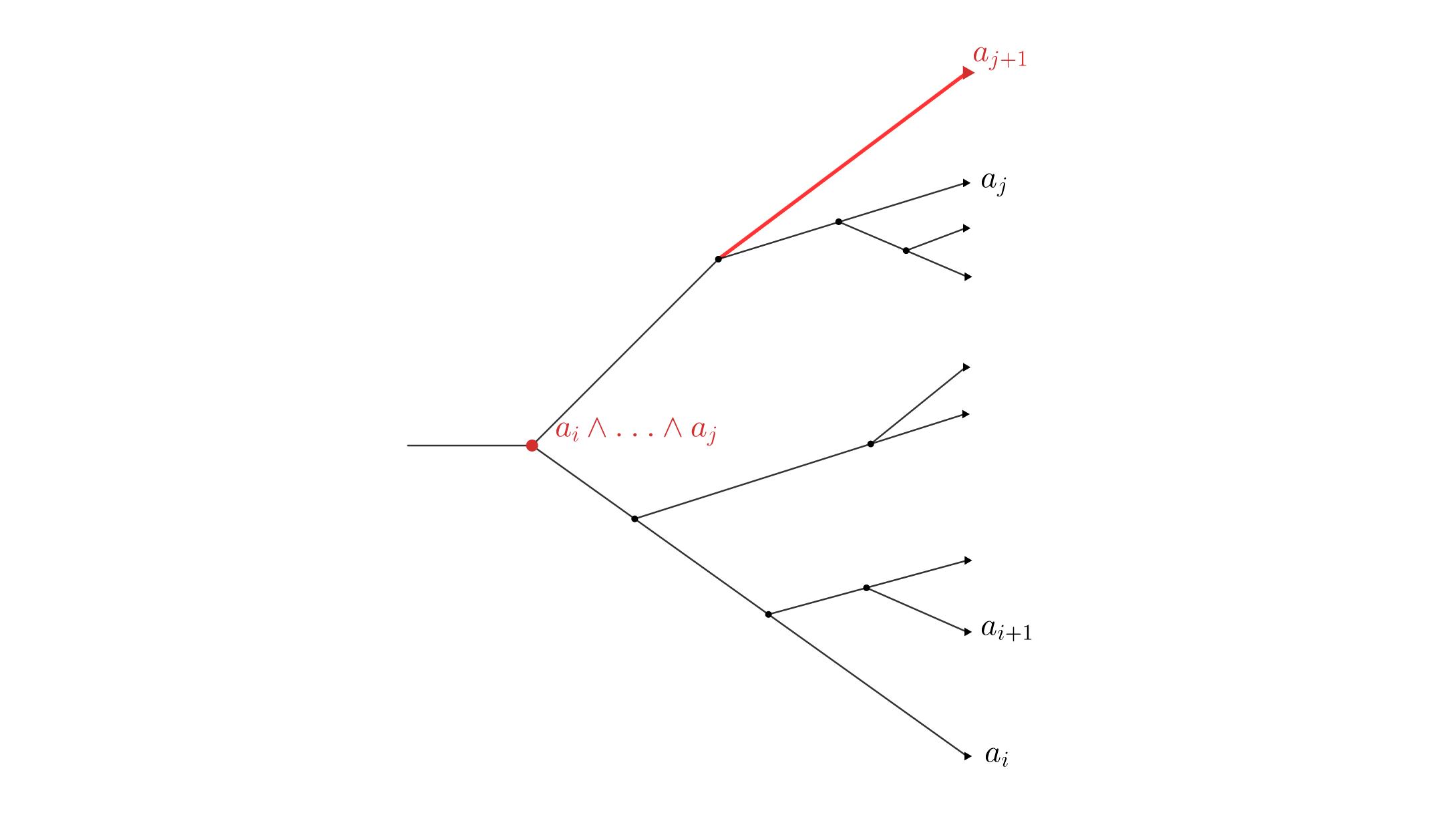}  
\caption{Recursive step case (a).\label{fig:recurr1}}
\end{figure}

\item or $a_i\wedge \ldots \wedge a_j$ $>_{\mathcal{CT}}$ $a_j \wedge a_{j+1}$; in this case, $\nu_x(a_i\wedge a_{i+1} \wedge\ldots\wedge a_{j}\wedge a_{j+1})=\nu_x(a_j\wedge a_{j+1})$. 
Since $a_i\wedge \ldots \wedge a_j$ $>_{\mathcal{CT}}$ $a_j \wedge a_{j+1}$, we have $ \nu_x(a_j \wedge a_{j+1})=\min(\nu_x(a_i\wedge a_{i+1}), \nu_x(a_{i+1}\wedge a_{i+2}),\ldots,\nu_x(a_{j-1}\wedge a_j))$ and the induction step is finished.

\begin{figure}[H] 
\centering
\includegraphics[scale=0.15]{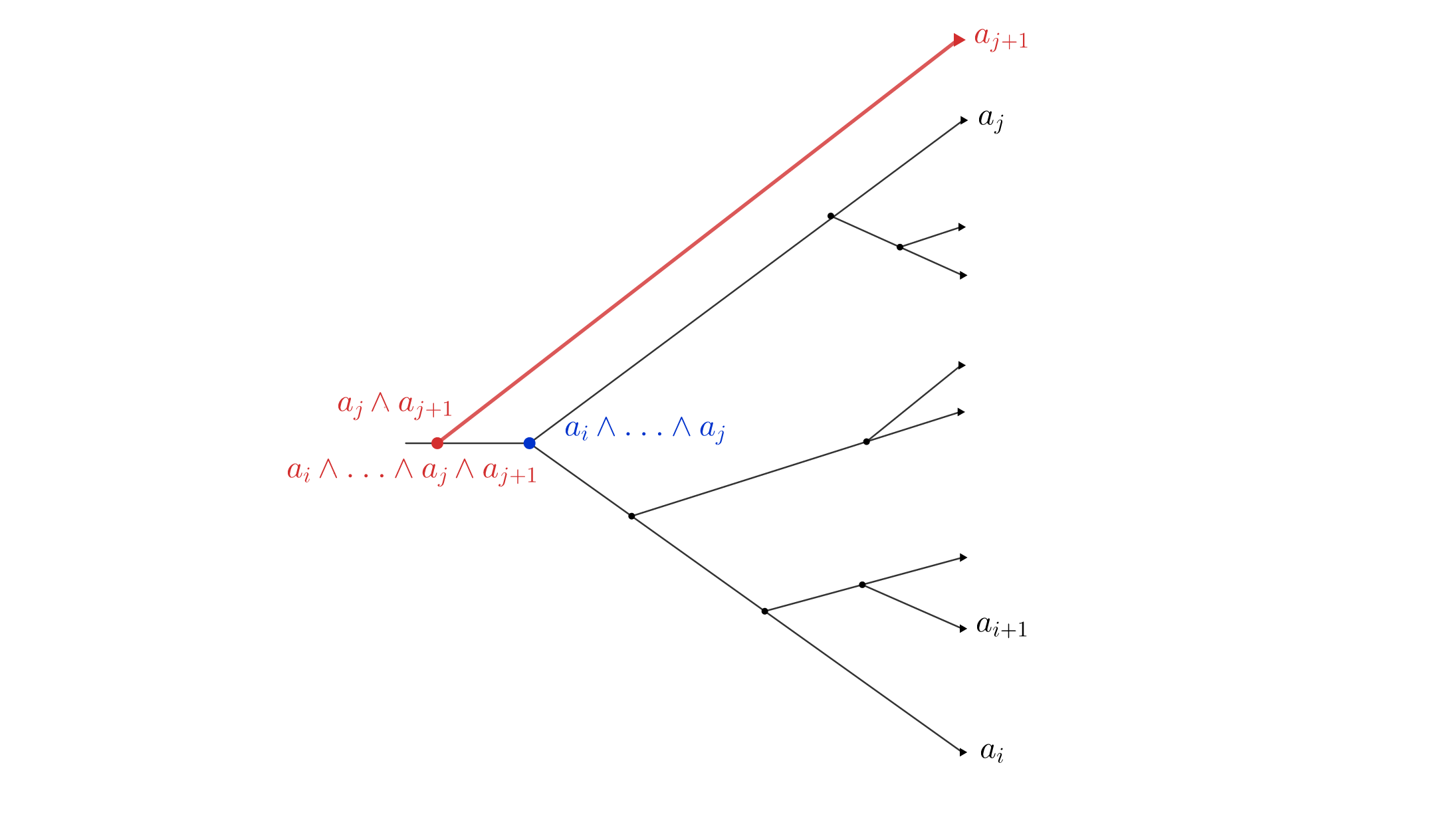} 
\caption{Recursive step case (b).\label{fig:recurr2}}
\end{figure}

\end{enumerate}

\end{proof}

In the sequel we consider only complete plane binary contact trees, in order to have a bijection between the bifurcation vertices and the pairs $(a_{i+1},a_{i})$ for $i=1,\ldots,m$.

\begin{corollary}
If the contact tree is an end-rooted complete plane binary plane tree then there is a bijection between the set of internal vertices of the contact tree and the set of the vertices $a_i\wedge a_{i+1},$ for $i=1,\ldots,m$.
\end{corollary}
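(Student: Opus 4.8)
The statement asserts that the correspondence $\Phi\colon i\mapsto a_i\wedge a_{i+1}$, $i=1,\dots,m$, is a bijection from $\{1,\dots,m\}$ onto the set of bifurcation (internal) vertices of $\mathcal{CT}(\mathcal{A})$; without the binary hypothesis this already fails, as the non‑binary example of Figure \ref{Fig:NongenericCtcTree} shows (there $a_1\wedge a_2=a_3\wedge a_4$, so $\Phi(1)=\Phi(3)$). The plan is therefore straightforward: surjectivity of $\Phi$ onto the internal vertices is exactly Proposition \ref{prop:bij-surjection}, so only injectivity remains, and this is the single point where the \enquote{complete plane binary} hypothesis is used.

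For injectivity, I would suppose $a_i\wedge a_{i+1}=a_j\wedge a_{j+1}=:v$ with $i\le j$. Since $\mathcal{CT}(\mathcal{A})$ is complete binary, $v$ has exactly two children $c_1,c_2$, and the subtrees $T_1,T_2$ rooted at them contain between them all leaves descending from $v$. Because $\mathcal{CT}(\mathcal{A})$ is a \emph{plane} tree whose leaves are totally ordered by the vertical order on the $Oy$‑axis (Definition \ref{def:CT}), and this order is compatible with the planar embedding, every leaf of one of the two subtrees precedes (for $\prec_{+}$) every leaf of the other; say all leaves of $T_1$ precede all leaves of $T_2$. Now $v=a_i\wedge a_{i+1}$ means that the geodesics to $a_i$ and $a_{i+1}$ separate at $v$, so one of them lies in $T_1$ and the other in $T_2$; as $a_i\prec_{+}a_{i+1}$ this forces $a_i\in T_1$, $a_{i+1}\in T_2$. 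Since $a_i$ and $a_{i+1}$ are \emph{consecutive} leaves, $a_i$ must be the largest leaf of $T_1$ and $a_{i+1}$ the smallest leaf of $T_2$ (any intermediate leaf of $T_1$ would lie strictly between $a_i$ and $a_{i+1}$). The identical reasoning for $v=a_j\wedge a_{j+1}$ gives that $a_j$ is also the largest leaf of $T_1$; hence $a_i=a_j$, and since the leaves are in bijection with the distinct polynomials of $\mathcal{A}$, $i=j$. So $\Phi$ is injective, and together with Proposition \ref{prop:bij-surjection} it is a bijection.

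Equivalently, and more quickly, one may finish by a cardinality count: in any plane tree in which every bifurcation vertex has exactly two children, the number of bifurcation vertices equals the number of leaves minus one (immediate induction on the number of leaves, or the handshake lemma), so here there are exactly $m$ of them, and a surjection $\Phi$ from the $m$‑element set $\{1,\dots,m\}$ onto an $m$‑element set is automatically bijective. I do not expect a genuine obstacle here; the only thing to watch is a bookkeeping subtlety, namely that since $\mathcal{CT}(\mathcal{A})$ is end‑rooted its root is an internal vertex in the sense of Definition \ref{def:treeVerticesETC} yet has valency $1$, hence is never of the form $a_i\wedge a_{i+1}$ — so \enquote{internal vertices} in the statement must be read as \enquote{bifurcation vertices}, consistently with the usage in Proposition \ref{prop:bij-surjection}.
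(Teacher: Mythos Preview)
Your proof is correct. The paper states this corollary without proof (it is followed only by Remark \ref{remark:GenericContactTreeBijection}, which restates the bijection in the form you use but does not argue it), so there is nothing to compare against; your argument supplies exactly the missing details. Both routes you give are valid: the cardinality count is the cleanest (surjectivity from Proposition \ref{prop:bij-surjection}, and in an end-rooted complete plane binary tree with $m+1$ leaves there are exactly $m$ bifurcation vertices), while the direct argument via the planar order on leaves makes explicit why binarity is needed. Your closing observation is also on point: the paper uses ``internal vertex'' and ``bifurcation vertex'' interchangeably (see the proof of Proposition \ref{prop:bij-surjection}), so the root, though not a leaf, is excluded from the count.
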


\begin{remark}\label{remark:GenericContactTreeBijection}
To be more precise, if the contact tree is complete plane binary, then there is a bijection between the set $\{a_i\wedge a_{i+1}\mid i=1,\ldots, m\}$ and the set of pairs  $\{(a_i,a_{i+1})\mid i=1,\ldots, m-1\}$: to each pair of consecutive polynomials it corresponds the unique vertex $a_i\wedge a_{i+1}$ and vice-versa. In this case, the contact tree is necessarily an end-rooted complete plane binary tree. 
\end{remark}

\section{A valuative study on the contact tree}\label{subs:v(Si)}
This section consists of a valuative study for small enough $x>0$. We start by giving an exact computation of the valuation in $x$ of any area $\mathrm{S}_i(x)$, using the numerical information that decorates the contact tree (see Proposition \ref{prop:valuationOfSi}). This result is an important step towards the main goal of this section: Proposition \ref{prop:valuationOfSumOfAreas}, which will be one of the key ingredients in our construction of separable snakes.

In the sequel, let us fix $i$ and compute the valuation $\nu_x(\mathrm{S}_i(x)),$ in terms of the contact tree and of the valuations $e_j:=\nu_x(\delta_j(x)),$ for $j=1,\ldots,m$.

Recall that the roots of the polynomial $$P_x(y):=\displaystyle \prod_{i=1}^{m+1}(y-a_i(x))\in \mathbb{R}[x][y]$$ (see equation \ref{eq:P}), namely $a_i(x)\in\mathcal{A}$ verify equation \ref{eq:rootsIneq}:
  $$a_1(x)\prec_{+} a_2(x)\prec_{+} \ldots \prec_{+} a_{m+1}(x).$$ For any $i=1,\ldots,m+1$, one has $$\delta_i(x):=a_{i+1}(x)-a_{i}(x),$$ $$\nu_x(\delta_i(x)):=e_i$$ and the area $$\mathrm{S}_i(x):=\left | \int_{a_{i}(x)}^{a_{i+1}(x)}P_x(y) \mathrm{d}y \right |.$$ 
  
  \begin{figure}[H]
  \begin{center}
  \begin{tikzpicture}[scale=0.8]


\tikzstyle{vertex} = [draw,blue]
\tikzstyle{edge} = [draw,red,thick]

\coordinate (A) at (1,0);
\coordinate (B) at (4,0);
\coordinate (C) at (2.5,1);
\coordinate (D) at (2.5,-1);


\fill[vertex] (A) circle(2pt);
\node at (A) [above] {$a_i(x)$};

\fill[vertex] (B) circle(2pt);
\node at (B) [above right] {$a_{i+1}(x)$};

\node at (C) [below] {$S_i(x)$};

\node at (D) [above] {$\delta_i(x)$};


\begin{scope}
      \draw[->] (-3,0) -- (7,0) node[right] {$y$};
      \draw[->] (0,-3) -- (0,2.2) node[above] {$P_x(y)$};
      \draw[scale=1,domain=-0.1:5.1,smooth,variable=\x,thick,blue] plot ({\x},{-0.5*(\x-1)*(\x-4)});
      \draw[pattern=north west lines,scale=1,domain=1:4,smooth,variable=\x] plot ({\x},{-0.5*(\x-1)*(\x-4)});
\end{scope}

\end{tikzpicture}
  \end{center}
  \caption{Area $\mathrm{S_i}$, corresponding to $\delta_i=a_{i+1}-a_i$.  \label{fig:7}}
\end{figure}
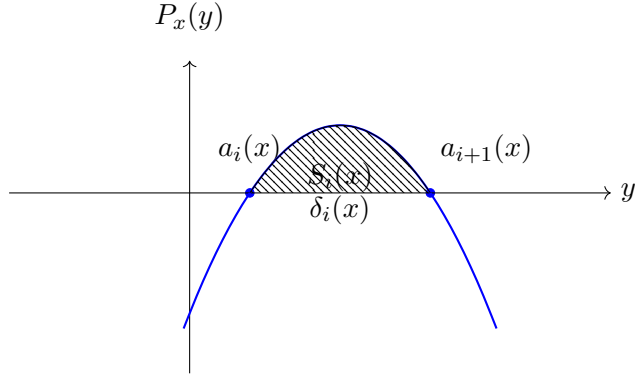

\begin{proposition}\label{prop:valuationOfSi}
Let us consider the geodesic $\mathcal{G}_i$, from the root to $a_i\wedge a_{i+1}$. One has the formula: 

\begin{equation}\label{ec:ValuationS}
\nu_x(\mathrm{S}_i(x))=e_i+\sum_{\{G\in \mathcal{G}_i\mid G\leq_{\mathcal{CT}} a_i\wedge a_{i+1}\}} c_{G}(i)  \nu_x(G),
\end{equation}
 
where the coefficient $c_{G}(i)\in\mathbb{N}$ represents the number of leaves of the contact tree $\mathcal{CT}(\mathcal{A})$, whose most recent ancestor belonging to $\mathcal{G}_i$ is $G$. We shall further write $c_{G}(i):=c_G$ if $i$ is clear from the context. 
\end{proposition}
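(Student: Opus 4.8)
The plan is to compute $\nu_x(\mathrm{S}_i(x))$ directly from the integral $\mathrm{S}_i(x)=\left|\int_{a_i(x)}^{a_{i+1}(x)}P_x(y)\,\mathrm dy\right|$ by performing the substitution $y=a_i(x)+\delta_i(x)\,u$, $u\in[0,1]$, which is the natural rescaling adapted to the gap $\delta_i$. Under this change of variables $\mathrm dy=\delta_i(x)\,\mathrm du$, and each factor $y-a_j(x)$ of $P_x(y)$ becomes $\big(a_i(x)-a_j(x)\big)+\delta_i(x)\,u$. The key observation is that for the two factors $j=i$ and $j=i+1$ one has $y-a_i(x)=\delta_i(x)u$ and $y-a_{i+1}(x)=\delta_i(x)(u-1)$, contributing a clean factor $\delta_i(x)^2$; combined with the $\delta_i(x)$ from $\mathrm dy$ this already produces $\delta_i(x)^3$, which is \emph{not} what the formula predicts — so the real content is understanding the valuation of the remaining product $\prod_{j\neq i,i+1}\big((a_i(x)-a_j(x))+\delta_i(x)u\big)$ and extracting the dominant term in $x$.

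**Key steps.** First I would show that for each $j\notin\{i,i+1\}$ the valuation of $(a_i(x)-a_j(x))+\delta_i(x)u$ (as a function of $x$, for generic fixed $u\in(0,1)$) equals $\min\big(\nu_x(a_i-a_j),\,e_i\big)$; since the leaf $a_j$ branches off the geodesic $\mathcal G_i$ at the vertex $a_i\wedge a_j$, Corollary~\ref{coroll:GeneralWedge} (applied to the consecutive block between $a_j$ and $a_i$, or between $a_i$ and $a_j$) identifies $\nu_x(a_i-a_j)=\nu_x(a_i\wedge a_j)$, and because the valuations increase along geodesics and $a_i\wedge a_j\leq_{\mathcal{CT}} a_i\wedge a_{i+1}$, this minimum is simply $\nu_x(a_i\wedge a_j)$ whenever $a_i\wedge a_j<_{\mathcal{CT}}a_i\wedge a_{i+1}$, and equals $e_i$ when $a_i\wedge a_j=a_i\wedge a_{i+1}$ (i.e. when $a_j$ hangs off the terminal vertex). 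Next I would group the leaves $a_j$ ($j\neq i,i+1$) according to which vertex $G\in\mathcal G_i$, $G\leq_{\mathcal{CT}}a_i\wedge a_{i+1}$, is their most recent ancestor lying on $\mathcal G_i$: the number of such leaves attached at $G$ is precisely $c_G(i)$, and each contributes valuation $\nu_x(G)$. Summing, the product $\prod_{j\neq i,i+1}(\cdots)$ has valuation $\sum_{\{G\in\mathcal G_i\mid G\leq_{\mathcal{CT}}a_i\wedge a_{i+1}\}}c_G(i)\,\nu_x(G)$, and adding the $\delta_i(x)^3$ but subtracting the overcount (the $i$-th and $(i+1)$-th leaves are themselves attached at $a_i\wedge a_{i+1}$, already accounted for in $c_{a_i\wedge a_{i+1}}(i)$, so the bookkeeping gives $e_i$ net rather than $3e_i$) yields exactly \eqref{ec:ValuationS}. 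Finally I would check that the leading coefficient of $\mathrm{S}_i(x)$ in $x$ is nonzero — i.e. the integral $\int_0^1 u(u-1)\prod_{j}(\lambda_j+\mu_j u)\,\mathrm du$ of the leading terms does not vanish identically — so that no cancellation lowers the predicted valuation.

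**Main obstacle.** The delicate point is the last one: controlling possible cancellation in the $x$-leading term of the integral. The valuation of an integral can in principle exceed the minimum of the valuations of the integrand (if the leading-order integrand integrates to zero over $[0,1]$), so I must argue that the relevant leading polynomial in $u$ — a product of linear factors $\lambda_j+\mu_j u$ times $u(u-1)$, with the $\lambda_j,\mu_j$ the leading coefficients of $a_i-a_j$ and $\delta_i$ respectively — has nonzero integral on $[0,1]$. This should follow because that polynomial has a strict sign on $(0,1)$: the factor $u(u-1)<0$ there, and for $j\notin\{i,i+1\}$ the linear factor $\lambda_j+\mu_j u$ inherits the strict sign of $a_i(x)-a_j(x)$ for small $x>0$ (there is no sign change inside $[0,1]$ precisely because $a_j$ lies entirely below $a_i$ or entirely above $a_{i+1}$ for small $x>0$, the curves being ordered by $\prec_+$ and non-crossing for $0<x\ll1$). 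Hence the integrand of the leading term is of constant sign on $(0,1)$, its integral is nonzero, and the valuation is exactly as claimed. I would make the "non-crossing for small $x>0$" statement precise using \eqref{eq:rootsIneq} and the fact that distinct real polynomials cross at finitely many points.
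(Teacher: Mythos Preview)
Your proposal is correct and follows essentially the same route as the paper: both perform the substitution $y=a_i+s\,\delta_i$, identify the valuation of each factor $y-a_j$ with $\nu_x(a_i\wedge a_j)$ via Corollary~\ref{coroll:GeneralWedge}, and then regroup the factors according to the branching vertex on $\mathcal G_i$. The paper handles your ``main obstacle'' (non-cancellation in the $u$-integral) by writing each factor explicitly as a sum of \emph{positive} gaps, e.g.\ $\delta_j+\cdots+\delta_{i-1}+s\,\delta_i$ or $(1-s)\delta_i+\delta_{i+1}+\cdots+\delta_{j-1}$, so that the integrand is manifestly of constant sign on $(0,1)$ --- this is exactly your sign argument, made slightly more transparent.
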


Before proceeding to the proof of Proposition \ref{prop:valuationOfSi}, we strongly recommend  the lecture of Example \ref{ex:calculsValuationSi} below.

\begin{example}\label{ex:calculsValuationSi}
Let us consider the following given roots of the polynomial $P_x(y):$

$a_1(x)=0,$ 

$a_2(x)=x^4$, 

$a_3(x)=x^3+x^4$, 

$a_4(x)=x^2+x^3+x^4$,

 $a_5(x)=x^1+x^2+x^3+x^4.$ 

Therefore, we obtain: 

$\delta_1:=a_2(x)-a_1(x)=x^4$ and $e_1:=\nu_x(\delta_1)=4$, 

$\delta_2:=a_3(x)-a_2(x)=x^3$ and $e_2:=\nu_x(\delta_2)=3$, 

$\delta_3:=a_4(x)-a_3(x)=x^2$ and $e_3:=\nu_x(\delta_3)=2$,
 
$\delta_4:=a_5(x)-a_4(x)=x^1$ and $e_4:=\nu_x(\delta_4)=1$.
 
We want to compute the valuation of the following area $S_3(x):=\left | \int_{a_3(x)}^{a_4(x)}P_x(y)\mathrm{d}y\right |.$ 
 
We may write $S_3(x)=\int_{a_3(x)}^{a_4(x)}(y-a_1(x))(y-a_2(x))(y-a_3(x))(a_4(x)-y)(a_5(x)-y)\mathrm{d}y.$ 

Consider the following change of variable: $y:=a_3(x)+s\delta_3(x),$ where $s\in[0,1].$ We obtain :

\begin{footnotesize}
$$S_3(x)=\int_{0}^{1}(\delta_1(x)+\delta_2(x)+s\delta_3(x))(\delta_2(x)+s\delta_3(x))s\delta_3(x)(1-s)\delta_3(x)(\delta_4(x)+(1-s)\delta_3(x))\delta_3(x)\mathrm{d}s=$$

$$=\delta_1(x)\delta_2(x)\delta_3(x)^3\delta_4(x)
\int_{0}^1 s(1-s)\mathrm{d}s+\delta_1(x)\delta_2(x)\delta_3(x)^4
\int_{0}^1 s(1-s)^2\mathrm{d}s+\delta_1(x)\delta_3(x)^4\delta_4(x)
\int_{0}^1 s^2(1-s)\mathrm{d}s+$$ $$+\delta_1(x)\delta_3(x)^5
\int_{0}^1 s^2(1-s)^2\mathrm{d}s+\delta_2(x)^2\delta_3(x)^3\delta_4(x)
\int_{0}^1 s(1-s)\mathrm{d}s+\delta_2(x)^2\delta_3(x)^4
\int_{0}^1 s(1-s)^2\mathrm{d}s+$$ $$+\delta_2(x)\delta_3(x)^4\delta_4(x)
\int_{0}^1 2s^2(1-s)\mathrm{d}s+\delta_2(x)\delta_3(x)^5
\int_{0}^1 2s^2(1-s)\mathrm{d}s+\delta_3(x)^5\delta_4(x)
\int_{0}^1 s^3(1-s)\mathrm{d}s+\delta_3(x)^6
\int_{0}^1 s^3(1-s)\mathrm{d}s.$$ 

Thus $$\nu_x(S_3(x))=\mathrm{min}\bigg\{
\nu_x(\delta_1(x)\delta_2(x)\delta_3(x)^3\delta_4(x)),\nu_x(\delta_1(x)\delta_2(x)\delta_3(x)^4),
\nu_x(\delta_1(x)\delta_3(x)^4\delta_4(x)), \nu_x(\delta_1(x)\delta_3(x)^5),
\nu_x(\delta_2(x)^2\delta_3(x)^3\delta_4(x)),$$
$$\nu_x(\delta_2(x)^2\delta_3(x)^4), \nu_x(\delta_2(x)\delta_3(x)^4\delta_4(x)),
\nu_x(\delta_2(x)\delta_3(x)^5), \nu_x(\delta_3(x)^5\delta_4(x)),
\nu_x(\delta_3(x)^6)\bigg\}=\nu_x(\delta_3(x)^5\delta_4(x))=5e_3+e_4=\textbf{11}.$$
\end{footnotesize}
Let us now interpret the result obtained above, this time with respect to the contact tree. Denote by $\mathcal{G}_i$, the geodesic from the root to $a_i\wedge a_{i+1}$.  See Figure \ref{Fig:calculValSi}.

\begin{figure}[H]
\begin{center}
\tikzset{cross/.style={cross out, draw=black, fill=none, minimum size=2*(#1-\pgflinewidth), inner sep=0pt, outer sep=0pt}, cross/.default={2pt}}
\begin{tikzpicture}[scale=0.9]

\tikzstyle{vertex} = [draw,blue]
\tikzstyle{edge} = [draw,red,thick]
\draw[->] (-3,-6) -- (7,-6) node[right] {$x$};
\draw[->] (0,-7) -- (0,-1) node[left] {$y$};
\node at (0,-6.3) [above left] {$O$};
\draw[ color=red] (0,-5) -- (1,-5) ;
\node[cross] at (0,-5){}; 

\fill[vertex] (1,-5) circle(4pt);
       \node at (1,-5) [below] {$1$};
        \node at (1,-5) [above] {$G_1$};
\draw[->, color=red] (1,-5) -- (7,-5)node[right] {$a_1$} ;
\draw[->, color=red] (1,-5) -- (7,-1)node[right] {$a_5$} ;
\fill[vertex] (2,-5) circle(4pt);
       \node at (2,-5) [below] {$2$};
       \node at (2,-5) [above] {$G_2$};
       \draw[->, color=red] (2,-5) -- (7,-2)node[right] {$a_4$} ;
       \draw[->, color=red] (3,-5) -- (7,-3)node[right] {$a_3$} ;
       \fill[vertex] (3,-5) circle(2pt);
       \node at (3,-5) [below] {$3$};
        \fill[vertex] (4,-5) circle(2pt);
       \node at (4,-5) [below] {$4$};
       
       \draw[->, color=red] (4,-5) -- (7,-4)node[right] {$a_2$} ;
       
        \draw (7,-5) -- (7,-4)node[below right] {$\delta_1$} ;
        \draw (7,-4) -- (7,-3)node[below right] {$\delta_2$} ;
         \draw (7,-3) -- (7,-2)node[below right] {$\delta_3$} ;
          \draw (7,-2) -- (7,-1)node[below right] {$\delta_4$} ;

\end{tikzpicture}
 \caption{How to read the valuation of $\mathrm{S}_3(x)$ on the contact tree of the roots $a_i(x)\in\mathcal{A}$.\label{Fig:calculValSi}}
 \end{center}
\end{figure}

We want to check that: 
$$\nu_x(\mathrm{S}_3)=e_3+\sum_{\{G\in \mathcal{G}_3\mid G\leq_{\mathcal{CT}} a_3\wedge a_{4}\}} c_{G}(3)  \nu_x(G),$$

\noindent where the coefficient $c_{G}(3)\in\mathbb{N}$ represents the number of leaves of the contact tree $\mathcal{CT}(\mathcal{A})$, whose most recent ancestor belonging to $\mathcal{G}_3$ is $G$.

In this example,  $a_3\wedge a_{4}\equiv G_2$ and we can check that formula \ref{ec:ValuationS} gives us the same result, namely $\nu_x(S_3(x))=11$: the vertices on the geodesic $\mathcal{G}_3$ which are $\leq_{\mathcal{CT}}G_2$ are $G_1$ and $G_2$. We have $c_{G_1}(3)=1$ representing the number of leaves of the contact tree $\mathcal{CT}(\mathcal{A})$, whose most recent ancestor belonging to $\mathcal{G}_3$ is $G_1$. Also, $c_{G_2}(3)=4$ representing the number of leaves of the contact tree $\mathcal{CT}(\mathcal{A})$, whose most recent ancestor belonging to $\mathcal{G}_3$ is $G_2$. 

Thus $\nu_x(\mathrm{S}_3)=e_3+ c_{G_1}(3)  \nu_x(G_1)+ c_{G_2}(3)  \nu_x(G_2)=2+1\times 1+4\times 2=11.$ The verification is completed.

\end{example}

\begin{proof}
Let us prove Proposition \ref{prop:valuationOfSi}. We have (recall equation \ref{eq:P})
$$\mathrm{S}_i(x):=\left | \int_{a_{i}(x)}^{a_{i+1}(x)}P_x(y) \mathrm{d}y \right |=\left | \int_{a_{i}(x)}^{a_{i+1}(x)} (y-a_1(x))(y-a_2(x))\cdots (y-a_{m+1}(x))\mathrm{d}y\right |.$$ 

Note that $y\geq a_k(x),$ for any $k\leq i$ and that $y\leq a_k(x)$ for any $k\geq i+1$, thus by taking the absolute value we obtain: 
$$\mathrm{S}_i(x)=\int_{a_{i}(x)}^{a_{i+1}(x)} (y-a_1(x))(y-a_2(x))\cdots (y-a_i(x))(a_{i+1}(x)-y)(a_{i+2}(x)-y)\cdots (a_{m+1}(x)-y)\mathrm{d}y.$$

For $s\in[0,1]$ (see Figure \ref{fig:delta} below), make the change of variables $y=a_i(x)+s\delta_i(x).$
 
\begin{figure}[H]
\begin{center}
\tikzset{cross/.style={cross out, draw=black, fill=none, minimum size=2*(#1-\pgflinewidth), inner sep=0pt, outer sep=0pt}, cross/.default={2pt}}
\begin{tikzpicture}[scale=0.7]
\tikzstyle{vertex} = [draw,blue]
\tikzstyle{edge} = [draw,red,thick]
\draw (0,0) -- (16,0) ;
\fill[vertex] (0,0) circle(4pt);
       \node at (0,0) [below] {$a_1(x)$};
        \node at (1.25,0) [above] {$\delta_1$};
\fill[vertex] (2.5,0) circle(4pt);
       \node at (2.5,0) [below] {$a_2(x)$};       
        \node at (4.3,0) [above] {$\delta_2$};
       \fill[vertex] (6,0) circle(4pt);
       \node at (6,0) [below] {$a_3(x)$};   
        \fill[vertex] (8,0) circle(4pt);
       \node at (8,0) [below] {$a_i(x)$};  
        \node at (7,0) [below] {$\cdots$};
          \fill[vertex] (10,0) circle(4pt);
       \node at (10,0) [below] {$y$};      
           \fill[vertex] (11,0) circle(4pt);
       \node at (11,0) [below] {$a_{i+1}$};    
       \node at (13,0) [below] {$\cdots$};
        \fill[vertex] (15,0) circle(4pt);
       \node at (15,0) [below] {$a_{m}$};
        \fill[vertex] (16,0) circle(4pt);
        \node at (15.5,0) [above] {$\delta_m$};
       \node at (16,0) [below] {$a_{m+1}$};    
       \draw[ color=red, thick] (8,0) -- (11,0) ;

\end{tikzpicture}
 \caption{Change of variable $y=a_i(x)+s\delta_i(x)$\label{fig:delta}}
 \end{center}
\end{figure}

Therefore:
$$
 \mathrm{S}_i(x)=\int_{0}^{1}(\delta_1(x)+\delta_2(x)+\cdots+
\delta_{i-1}(x)+s\delta_i(x))(\delta_2(x)+\cdots+
\delta_{i-1}(x)+s\delta_i(x))\cdots $$
$$
\cdots (0+s\delta_i(x)) ((1-s)\delta_i(x)) ((1-s)\delta_i(x)+\delta_{i+1}(x))\cdots ((1-s)\delta_i(x)+\delta_{i+1}(x)+\cdots +\delta_m(x)) (\delta_i(x)\mathrm{d}s).
$$

Let us study the valuation in $x$ of  one of the parentheses, say $$\nu:=\nu_x((1-s)\delta_i(x)+\delta_{i+1}(x)+\cdots +\delta_j(x)).$$ The other parentheses can all be treated similarly. 

The key fact is that by integrating in $s$ one does not change the valuation in $x$.

Since $a_1\prec_{+} a_2\prec_{+} \ldots\prec_{+} a_{m+1}$, one has $\delta_i(x)>0,$ for any $0<x\ll 1$ and for any $i=1,\ldots,m.$ Thus we need to compute the valuation of a sum of positive terms. Therefore the valuation will be equal to the minimum of the valuations of the terms in the chosen  parenthesis, namely: $\nu=\min (\nu_x(\delta_i(x)),
\nu_x(\delta_{i+1}(x)),\ldots,\nu_x(\delta_j(x)))$. By Remark \ref{remark:ei} we get $\nu=\min (\nu_x(a_{i}\wedge a_{i+1}),
\nu_x(a_{i+1}\wedge a_{i+2}),\ldots,\nu_x(a_{j}\wedge a_{j+1}))$. By Corollary \ref{coroll:GeneralWedge}, we obtain $\nu=\nu_x(a_i\wedge a_{i+1} \wedge \ldots \wedge a_{j+1}).$ There are two consequences of this fact, as follows:
\begin{enumerate}
\item $\nu\leq \nu_x(a_{i}\wedge a_{i+1})=e_i$;

\item by Corollary \ref{coroll:GeneralWedge}, $\nu=\nu_x(a_i \wedge a_{j+1}),$ 
namely the valuation of the vertex $G_{a_i \wedge a_{j+1}}:=G$ where the geodesic of $a_i$ and the geodesic of $a_{j+1}$ separate in the contact tree. Thus the vertex $G$ is on the geodesic $\mathcal{G}_i.$

\end{enumerate}

For fixed $i$, to each leaf $a_j$, $j=1,\ldots,m+1$, it corresponds a parenthesis in the expression of $\mathrm{S}_i$ as above. As we have just seen, the parenthesis corresponds to exactly one vertex $G$ situated on the geodesic $\mathcal{G}_i$ such that $G\leq_{\mathcal{CT}} a_i\wedge a_{i+1}$. 

Let us count differently: for each vertex $G$ laying on the geodesic $\mathcal{G}_i$, with $ G \leq_{\mathcal{CT}} a_i \wedge a_{i+1}$, we count the number of leaves of $\mathcal{CT}$ whose most recent ancestor belonging to $\mathcal{G}_i$ is $G$. Denote this number by $c_{G}.$ Note that we only consider the vertices $ G \leq_{\mathcal{CT}} a_i \wedge a_{i+1}$ since we proved above that for each parenthesis we have $\nu\leq \nu_x(a_{i}\wedge a_{i+1})=e_i$.  

In addition, since we made the change of variable, another factor $\delta_i(x)$ appeared in the product. Thus the valuation $e_i=\nu_x(\delta_i(x))$ has to be taken into consideration when we sum all the terms. In conclusion, $$\nu_x(\mathrm{S}_i(x))=e_i+\sum_{\{G\in \mathcal{G}_i\mid G \leq_{\mathcal{CT}} a_i\wedge a_{i+1}\}} c_{G}  \nu_x (G).$$

\end{proof}

\begin{example}\label{ex:S_4}
Given the roots $a_1(x)=0,$ $a_2(x)=x^6,$ $a_3(x)=x+x^6,$ $a_4(x)=x+x^2+x^6$, $a_5(x)=x+x^2+x^3+x^6,$ $a_6(x)=x+x^2+x^3+x^4+x^6,$ $a_7(x)=x+x^2+x^3+x^4+x^5+x^6,$ let us compute $\nu_x( \mathrm{S}_i(x)),$ for $i=4.$  

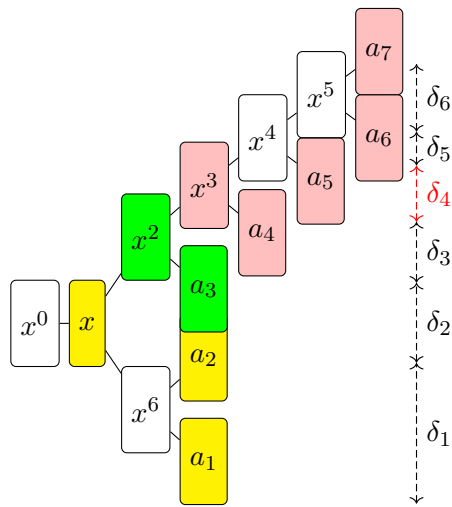
\begin{figure}[H]
  \begin{tikzpicture}[scale=0.3,grow=right,
  mynode/.style={
         draw, rounded corners=2pt
  },
  level 1/.style={sibling distance=20em},
  level 2/.style={sibling distance=20em},
  level 3/.style={sibling distance=12em},
  level 4/.style={sibling distance=11em},
  level 5/.style={sibling distance=10em}]
   \node  [mynode] {$x^0$} 
    child { node  [mynode,fill=yellow]  {$x$} 
		    child { node  [mynode] {$x^6$}            
              child { node  [mynode,fill=yellow]  {$a_1$} 
             edge from parent node {}}
              child { node  [mynode,fill=yellow]  {$a_2$} 
              edge from parent node {}}
             edge from parent node {} }
             child { node  [mynode,fill=green]  {$x^2$} 
             child { node  [mynode,fill=green] {$a_3$} 
             edge from parent node {}  }
             child { node  [mynode,fill=pink] {$x^3$} 
             child { node  [mynode,fill=pink] {$a_4$} 
             edge from parent node {}  }
             child { node  [mynode] {$x^4$} 
             child { node  [mynode,fill=pink] {$a_5$} 
             edge from parent node {}  }
             child { node  [mynode] {$x^5$}
             child { node  [mynode,fill=pink] {$a_6$} 
             edge from parent node {}  }
             child { node  [mynode,fill=pink] {$a_7$} 
             edge from parent node {}  }
             edge from parent node {}  }
             edge from parent node {}  }
             edge from parent node {}  }
             edge from parent node {}  }            
      edge from parent node {} };
      \draw[<->,line width=0.15mm,dashed,dash pattern=on 1mm off 0.5mm] (18,-8)--(18,-1.75) 
        node[midway,right] {$\mathbb{\delta}_1$};
        \draw[<->,line width=0.15mm,dashed,dash pattern=on 1mm off 0.5mm] (18,-1.75)--(18,1.8) 
        node[midway,right] {$\mathbb{\delta}_2$};
        \draw[<->,line width=0.15mm,dashed,dash pattern=on 1mm off 0.5mm] (18,1.8)--(18,4.5) 
        node[midway,right] {$\mathbb{\delta}_3$};
        \draw[<->,line width=0.15mm,dashed,dash pattern=on 1mm off 0.5mm,color=red] (18,4.5)--(18,7) 
        node[midway,right] {\textcolor{red}{$\mathbb{\delta}_4$}};
         \draw[<->,line width=0.15mm,dashed,dash pattern=on 1mm off 0.5mm] (18,7)--(18,8.5) 
        node[midway,right] {$\mathbb{\delta}_5$};
         \draw[<->,line width=0.15mm,dashed,dash pattern=on 1mm off 0.5mm] (18,8.5)--(18,11.5) 
        node[midway,right] {$\mathbb{\delta}_6$};
\end{tikzpicture}
\caption{How to compute the coefficients $c_k$ in the formula for the valuation of $S_4$: for each of the vertices on the geodesic to $a_4\wedge a_5$, count the leaves whose most recent common ancestor is that vertex.\label{fig:ExValuationOfS}}
\end{figure}

We have $\delta_1=x^6$, $e_1=6$, $\delta_2=x$, $e_2=1,$ $\delta_3=x^2,$ $e_3=2,$ $\delta_4=x^3$, $e_4=3,$ $\delta_5=x^4,$ $e_5=4$, $\delta_6=x^5,$ $e_6=5.$ Let us consider three steps: first we compute the valuation by integration; second we compute the valuation by using formula \ref{ec:ValuationS}; in the end, the third step is to check if the two results coincide.

\ \\
$\bullet$ \emph{Step 1: integration}

We have $$ \mathrm{S}_4(x)=\int_{0}^{1}(\delta_1+\delta_2+
\delta_{3}+s\delta_4)(\delta_2+
\delta_{3}+s\delta_4)(
\delta_{3}+s\delta_4)s\delta_4
 (1-s)\delta_4 ((1-s)\delta_4+\delta_5) ((1-s)\delta_4+\delta_5+\delta_6) (\delta_4\mathrm{d}s)=$$
 $$=\int_{0}^{1}(x^6+x^1+
x^2+sx^3)(x^1+
x^2+sx^3)(
x^2+sx^3)sx^3
 (1-s)x^3 ((1-s)x^3+x^4) ((1-s)x^3+x^4+x^5) (x^3\mathrm{d}s).$$
 We obtain 
$\nu_x( \mathrm{S}_4(x))=\nu_x(\int_{0}^1 x^1 x^1 x^2 s x^3 (1-s)x^3(1-s)x^3 (1-s)x^3 (x^3 \mathrm{d}s)),$ hence 

$\nu_x( \mathrm{S}_4(x))=2\nu_x(x^1)+1\nu_x(x^2)+4\nu_x(x^3)+\nu_x(x^3)=2e_2+1 e_3+4 e_4+e_4=19.$

\ \\
$\bullet$ \emph{Step 2: formula and contact tree}

See Figure \ref{fig:ExValuationOfS}, where we compute the coefficients $c_k$ in the formula for the valuation of $S_4$. The vertex $a_4\wedge a_5$ corresponds to $\delta_4$. Here $a_1\wedge a_2\not\in \mathcal{G}_4$, so $e_1$ does not appear in the sum. For each $G$ on the geodesic $\mathcal{G}_4$ with $ G\leq_{\mathcal{CT}} a_4\wedge a_5$, we count the number of leaves of $\mathcal{CT}$ whose most recent ancestor belonging to $\mathcal{G}_4$ is $G$. Thus $c_{a_2\wedge a_3}=2,$ $c_{a_3\wedge a_4}=1,$ $c_{a_4\wedge a_5}=4.$ 

Thus $\nu_x(\mathrm{S}_4(x))=2\times 1+ 1\times 2+4\times 3+3=19.$

\ \\
$\bullet$ \emph{Step 3: comparison of the two results}

Both methods yield the same result: $\nu_x(\mathrm{S}_4(x))=19.$
\end{example}

\begin{corollary}\label{corol:ValuationS_iAll}
With the hypotheses and notations from Proposition \ref{prop:valuationOfSi}, one can write $$\nu_x(\mathrm{S}_i)=e_i+\sum_{j=1}^i q_{ij}  e_j,$$
where $q_{ij}\in\mathbb{N}$, $
q_{ij}=
\begin{cases}
c_{G}(i), \text{ if } G \text{ is comparable to } a_i\wedge a _{i+1} \text{ and } G\in \mathcal{G}_i,\ G\leq_{\mathcal{CT}} a_i\wedge a_{i+1};\\
0 \text{ else. }

\end{cases}
$

\end{corollary}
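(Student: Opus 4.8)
The plan is to treat Corollary \ref{corol:ValuationS_iAll} as a purely combinatorial re‑indexing of Proposition \ref{prop:valuationOfSi}: the sum there runs over certain vertices $G$ of the contact tree, and I want to re‑label those vertices by integers and replace each $\nu_x(G)$ by the appropriate $e_j$. So I would start from the formula
\[
\nu_x(\mathrm{S}_i(x))=e_i+\sum_{\{G\in\mathcal{G}_i\mid G\leq_{\mathcal{CT}} a_i\wedge a_{i+1}\}} c_G(i)\,\nu_x(G)
\]
of Proposition \ref{prop:valuationOfSi} and manipulate the sum on the right.

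First I would check that every vertex $G$ which genuinely contributes, that is with $c_G(i)\geq 1$, is an internal (bifurcation) vertex of $\mathcal{CT}(\mathcal{A})$. A vertex lying on the geodesic $\mathcal{G}_i$ from the root to $a_i\wedge a_{i+1}$ is the root, or a bifurcation vertex, or the endpoint $a_i\wedge a_{i+1}$ itself, which is again a bifurcation vertex; a leaf cannot occur in the interior of a geodesic, since it has valency one. The root can be discarded too: the contact tree being end‑rooted, the root has a single child $r$, which is an ancestor of every leaf and already belongs to $\mathcal{G}_i$, so the root is never the most recent ancestor on $\mathcal{G}_i$ of any leaf, and hence $c_G(i)=0$ when $G$ is the root. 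Thus every nonzero summand is indexed by a bifurcation vertex.

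Next I would invoke the standing hypothesis that $\mathcal{CT}(\mathcal{A})$ is a complete plane binary end‑rooted tree. By Remark \ref{remark:GenericContactTreeBijection} the assignment $j\mapsto a_j\wedge a_{j+1}$ is then a bijection from $\{1,\dots,m\}$ onto the set of bifurcation vertices, so each contributing $G$ is of the form $a_j\wedge a_{j+1}$ for a unique $j$, and by Remark \ref{remark:ei} one has $\nu_x(G)=\nu_x(a_j\wedge a_{j+1})=\nu_x(\delta_j)=e_j$. Substituting this and setting
\[
q_{ij}:=
\begin{cases}
c_{a_j\wedge a_{j+1}}(i), & \text{if } a_j\wedge a_{j+1}\in\mathcal{G}_i \text{ (equivalently } a_j\wedge a_{j+1}\leq_{\mathcal{CT}} a_i\wedge a_{i+1}\text{)},\\
0, & \text{otherwise},
\end{cases}
\]
transforms $\sum_G c_G(i)\,\nu_x(G)$ into $\sum_j q_{ij}\,e_j$ (the index $j$ ranging over $\{1,\dots,m\}$), with every $q_{ij}\in\mathbb{N}$; and because $a_i\wedge a_{i+1}$ is the endpoint of $\mathcal{G}_i$ it is one of the vertices $G$, so $q_{ii}=c_{a_i\wedge a_{i+1}}(i)\geq 1$ and the term $j=i$ really does occur. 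This produces $\nu_x(\mathrm{S}_i)=e_i+\sum_j q_{ij}\,e_j$, as wanted.

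I do not foresee any serious obstacle: the corollary adds nothing new to Proposition \ref{prop:valuationOfSi}. The only point needing a moment of attention is the reduction carried out in the first step — verifying that the root, and more generally every non‑bifurcation vertex, carries coefficient zero — since it is precisely this that allows the bijection of Remark \ref{remark:GenericContactTreeBijection} to be applied term by term and each $\nu_x(G)$ to be rewritten as $e_j$.
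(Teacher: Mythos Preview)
Your argument is correct and is precisely the re-indexing the paper has in mind: the corollary is stated without proof, as an immediate reformulation of Proposition \ref{prop:valuationOfSi}, and you have spelled out the only nontrivial point, namely that under the complete binary hypothesis each contributing vertex $G$ is uniquely of the form $a_j\wedge a_{j+1}$ (Remark \ref{remark:GenericContactTreeBijection}), so that $\nu_x(G)=e_j$.

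One small observation in your favour: you let $j$ range over $\{1,\dots,m\}$, whereas the printed statement has $\sum_{j=1}^{i}$. Your range is the correct one. For instance, in a four-leaf binary contact tree whose first bifurcation (the child of the root) is $a_2\wedge a_3$, the geodesic $\mathcal{G}_1$ to $a_1\wedge a_2$ passes through $a_2\wedge a_3$, giving $q_{12}=c_{a_2\wedge a_3}(1)=2\neq 0$ with $j=2>i=1$; so the upper limit $i$ in the statement is a misprint for $m$. You have silently fixed it.
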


The purpose of Corollary \ref{corol:ValuationS_iAll} is to be able to express $\nu_x(\mathrm{S}_i)$ in function of all the $e_j$, for $j=1,i$, even if some of the $e_j$ will have zero coefficient.

Under the notations from Proposition \ref{prop:valuationOfSi}, we conclude this subsection with a lemma and a corollary, which we use later, in Subsection \ref{subsect:realising1var}.

\begin{lemma}\label{lemma:evenIThenUnderAxis}
If  $P_x(y)$ is a monic polynomial such that all its roots $a_i(x)\in\mathcal{A},$ $i=1,\ldots,m+1$ are distinct, then we have $\int_{a_i(x)}^{a_{i+1}(x)}P_x(t)\mathrm{d}t=(-1)^{m+1-i}\ \mathrm{S}_i.$
\end{lemma}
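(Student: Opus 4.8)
The statement is purely about the sign of the integral $\int_{a_i(x)}^{a_{i+1}(x)}P_x(t)\,\mathrm{d}t$, since $\mathrm{S}_i$ is by definition its absolute value. So the whole task reduces to showing that $P_x(t)$ has constant sign $(-1)^{m+1-i}$ on the open interval $(a_i(x),a_{i+1}(x))$, for sufficiently small $x>0$.

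First I would fix a sufficiently small $x>0$ so that the ordering $a_1(x)<a_2(x)<\cdots<a_{m+1}(x)$ of genuine real numbers holds; this is legitimate by hypothesis (\ref{eq:rootsIneq}), i.e. $a_1\prec_+a_2\prec_+\cdots\prec_+a_{m+1}$. Then I would take any $t$ with $a_i(x)<t<a_{i+1}(x)$ and examine the factorisation $P_x(t)=\prod_{k=1}^{m+1}\big(t-a_k(x)\big)$ coming from (\ref{eq:P}). For each index $k\le i$ we have $a_k(x)\le a_i(x)<t$, so $t-a_k(x)>0$; for each index $k\ge i+1$ we have $t<a_{i+1}(x)\le a_k(x)$, so $t-a_k(x)<0$. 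Hence exactly $(m+1)-i$ of the factors are negative and none vanishes, so $P_x(t)$ has the sign $(-1)^{m+1-i}$, independently of $t$ in this interval.

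Consequently the integrand keeps the constant sign $(-1)^{m+1-i}$ on $(a_i(x),a_{i+1}(x))$, so $(-1)^{m+1-i}\int_{a_i(x)}^{a_{i+1}(x)}P_x(t)\,\mathrm{d}t=\big|\int_{a_i(x)}^{a_{i+1}(x)}P_x(t)\,\mathrm{d}t\big|=\mathrm{S}_i(x)$, which gives $\int_{a_i(x)}^{a_{i+1}(x)}P_x(t)\,\mathrm{d}t=(-1)^{m+1-i}\mathrm{S}_i(x)$, as claimed.

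\textbf{Main obstacle.} There is essentially no obstacle here: the argument is a one-line sign count on the factors of $P_x(t)$. The only point demanding a word of care is that the inequalities between the $a_k(x)$ are a priori only asymptotic ($\prec_+$), so one must phrase everything for $0<x\ll 1$; once $x$ is fixed small enough the $a_k(x)$ are honestly ordered reals and the elementary sign analysis applies verbatim. (Note also this is consistent with the monic remark: for $i=m$ one gets sign $(-1)$, so $\mathrm{S}_m$ corresponds to a local minimum, matching the earlier observation that the last critical value of a monic Morse polynomial is a minimum.)
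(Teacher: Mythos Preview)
Your proof is correct and essentially the same elementary sign argument as the paper's: the paper anchors the sign on the rightmost interval using $\lim_{y\to\infty}P_x(y)=+\infty$ (monicity) to get $\int_{a_m}^{a_{m+1}}P_x<0$, then propagates the alternation of signs across the simple roots, whereas you count directly that exactly $m+1-i$ of the factors $t-a_k(x)$ are negative on $(a_i,a_{i+1})$. Both routes yield the same parity $(-1)^{m+1-i}$ with equal ease; your direct factor count is perhaps slightly more self-contained, while the paper's ``anchor-and-alternate'' phrasing makes the connection with the last critical value being a minimum (your parenthetical remark) more immediately visible.
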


\begin{proof}
 Since $P_x(y)$ is a monic polynomial, we know that $\lim_{y\rightarrow\infty}P_x(y)
=\infty$, and we obtain $$\int_{a_m(x)}^{a_{m+1}(x)}P_x(t)\mathrm{d}t=-\mathrm{S}_m$$ (namely the rightmost surface is always under the horizontal $Oy$-axis). By hypothesis, all the roots of $P_x(y)$ are distinct, thus the areas are alternatively above and under the $Oy$-axis drawn horizontally. One can conclude that a given area $\mathrm{S}_i$ is above $Oy$-axis if and only if $m+1-i$ is even and we have $\int_{a_i(x)}^{a_{i+1}(x)}P_x(t)\mathrm{d}t=(-1)^{m+1-i}\  \mathrm{S}_i.$
\end{proof}

\begin{corollary}\label{cor:Diferenta}
If $\mathrm{S}_i:=\bigg\vert\int_{a_i(x)}^{a_{i+1}(x)}P_x(t)\mathrm{d}t\bigg\vert$, then $\mathrm{S}_i$ is above $Oy$-axis if and only if $m+1-i$ is even.
\end{corollary}

\subsection{Inequalities between areas $\mathrm{S}_i$, read on the contact tree}\label{sect:IneqTree}

Considers the contact tree $\mathcal{CT}$ and two comparable vertices $v_k$ and $v_{\ell}$. The question is whether the order relation between the two vertices gives us an order between the corresponding areas $\mathrm{S}_k$ and $\mathrm{S}_{\ell}.$ 

\begin{proposition}\label{prop:comparable}
 Let $v_k:=a_k\wedge a_{k+1}$ and $v_{\ell}:=a_\ell\wedge a_{\ell+1}$ denote two comparable vertices of the contact tree $\mathcal{CT}$. If $v_k <_{\mathcal{CT}} v_{\ell}$, then we have the polynomial inequality $\mathrm{S}_k\succ_{+}  \mathrm{S}_{\ell}.$ 
\end{proposition}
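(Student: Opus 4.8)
The plan is to reduce the polynomial inequality $\mathrm{S}_k\succ_{+}\mathrm{S}_\ell$ to a strict inequality of valuations in $x$, and then to extract that inequality from the formula of Proposition~\ref{prop:valuationOfSi}. First, since each gap $\delta_i(x)$ has positive valuation (hypothesis~(\ref{eq:rootsIneq})), each $\mathrm{S}_i(x)$ is, up to sign, a nonzero polynomial in $x$; hence for $0<x\ll 1$ it is strictly positive and asymptotic to $c_i\,x^{\nu_x(\mathrm{S}_i)}$ with $c_i>0$. So it suffices to prove
\[
\nu_x(\mathrm{S}_k) < \nu_x(\mathrm{S}_\ell),
\]
because this forces $\mathrm{S}_\ell(x)/\mathrm{S}_k(x)\to 0$ as $x\to 0^+$, i.e.\ $\mathrm{S}_k(x)>\mathrm{S}_\ell(x)$ for all sufficiently small $x>0$, which is exactly $\mathrm{S}_k\succ_{+}\mathrm{S}_\ell$.

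To prove the valuation inequality, write $v_i:=a_i\wedge a_{i+1}$, recall $e_i=\nu_x(v_i)$ by Remark~\ref{remark:ei}, and use Proposition~\ref{prop:valuationOfSi} in the form
\[
\nu_x(\mathrm{S}_i)=e_i+\sum_{G\in\mathcal{G}_i}c_G(i)\,\nu_x(G),
\]
where $\mathcal{G}_i$ is the geodesic from the root to $v_i$ (the root contributes $0$) and $c_G(i)$ is the number of leaves whose most recent ancestor lying on $\mathcal{G}_i$ is $G$. Since $v_k<_{\mathcal{CT}}v_\ell$, the geodesic $\mathcal{G}_k$ is a proper initial segment of $\mathcal{G}_\ell$; write the latter as the path $G_0,G_1,\dots,G_s=v_k,G_{s+1},\dots,G_r=v_\ell$ with $s<r$, so $\mathcal{G}_k$ is $G_0,\dots,G_s$. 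For $t<s$ the vertices $G_t$ lie strictly below $v_k$, and one checks that $c_{G_t}(k)=c_{G_t}(\ell)$, because a leaf branches off $\mathcal{G}_k$ at $G_t$ exactly when it branches off $\mathcal{G}_\ell$ at $G_t$. At the branch vertex $v_k=G_s$, the leaves counted by $c_{v_k}(k)$ are precisely the descendant leaves of $v_k$, and each of them leaves $\mathcal{G}_\ell$ at a unique $G_u$ with $s\le u\le r$; hence $c_{v_k}(k)=\sum_{u=s}^{r}c_{G_u}(\ell)$.

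Substituting these relations and cancelling the contributions common to the two formulas gives
\[
\nu_x(\mathrm{S}_\ell)-\nu_x(\mathrm{S}_k)=\bigl(\nu_x(v_\ell)-\nu_x(v_k)\bigr)+\sum_{u=s+1}^{r}c_{G_u}(\ell)\bigl(\nu_x(G_u)-\nu_x(v_k)\bigr).
\]
Each term on the right is nonnegative: distinct bifurcation vertices on a single geodesic carry distinct valuations and the valuations increase towards the leaves, so $\nu_x(G_u)>\nu_x(v_k)$ for every $u>s$, in particular $\nu_x(v_\ell)>\nu_x(v_k)$; and $c_{G_u}(\ell)\ge 0$. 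Hence the right-hand side is strictly positive, which yields $\nu_x(\mathrm{S}_k)<\nu_x(\mathrm{S}_\ell)$ and, together with the first paragraph, the claim.

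The step that will require the most care — and essentially the only real obstacle — is the combinatorial bookkeeping of the coefficients: checking that $c_G$ is unchanged on the common segment of the two geodesics, and that the weight $c_{v_k}(k)$ splits exactly along the tail $G_s,\dots,G_r$ of $\mathcal{G}_\ell$. For this one uses that $v_k$ is an internal vertex and that the contact tree, being a tree, contains no cycles, the same subtlety exploited in the proof of Lemma~\ref{lemma:wedge}.
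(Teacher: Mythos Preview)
Your proof is correct and follows essentially the same approach as the paper: both arguments reduce to the valuation inequality $\nu_x(\mathrm{S}_k)<\nu_x(\mathrm{S}_\ell)$ via Proposition~\ref{prop:valuationOfSi}, and both hinge on the same two combinatorial identities, namely $c_{G_t}(k)=c_{G_t}(\ell)$ for $t<s$ and $c_{v_k}(k)=\sum_{u=s}^{r}c_{G_u}(\ell)$. The only cosmetic difference is that you compute the difference $\nu_x(\mathrm{S}_\ell)-\nu_x(\mathrm{S}_k)$ explicitly and read off its positivity, whereas the paper bounds $\nu_x(\mathrm{S}_\ell)$ from below through a chain of inequalities; your version is arguably a bit cleaner.
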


Before reading the proof of Proposition \ref{prop:comparable}, we suggest the following example. 

\begin{example}\label{ex:LeavesComputation}
For the contact tree in Figure \ref{fig:leaves}, we want to compare the coefficients which appear in the computation of $\nu_x(\mathrm{S}_3)$, i.e. $c_{a_i\wedge a_{i+1}}(3)$ with the coefficients which appear in the computation of $\nu_x(\mathrm{S}_7)$, i.e. $c_{a_i\wedge a_{i+1}}(7)$. We have the inclusion of geodesics $\mathcal{G}_3\subset\mathcal{G}_7.$

Let us count the number of leaves that exit each vertex, for each geodesics.
We find equality in the case of the common vertices of the two geodesics: 

$c_{a_1\wedge a_{2}}(3)=c_{a_1\wedge a_{2}}(7)=1$,

$c_{a_2\wedge a_{3}}(3)=c_{a_2\wedge a_{3}}(7)=1.$

Starting with the vertex $a_3\wedge a_4,$ which is the end on the first geodesic $\mathcal{G}_3$, we obtain different number of leaves which exit the two geodesics:

$c_{a_3\wedge a_{4}}(3)=6$ and $c_{a_3\wedge a_{4}}(7)=1.$

Moreover, since the vertices $a_4\wedge a_{5}$, $a_5\wedge a_{6}$, $a_6\wedge a_{7}$ and $a_7\wedge a_{8}$ are not on the geodesic $\mathcal{G}_3$, it follows that:

$c_{a_4\wedge a_{5}}(3)=c_{a_5\wedge a_{6}}(3)=c_{a_6\wedge a_{7}}(3)=c_{a_7\wedge a_{8}}(3)=0.$ 

However, on the rest of the geodesic $\mathcal{G}_7$, i.e. on $\mathcal{G}_7\setminus\mathcal{G}_3$ we have:

$c_{a_4\wedge a_{5}}(7)=1,$ $c_{a_5\wedge a_{6}}(7)=1;$ 

Since $a_6\wedge a_{7}\notin\mathcal{G}_7$ we have $c_{a_6\wedge a_{7}}(7)=0$.

Finally, we have $c_{a_7\wedge a_{8}}=3,$ wince there are three leaves that exit $\mathcal{G}_7$ at the vertex $a_7\wedge a_{8}.$

\begin{figure}[H]
\begin{center}
  \begin{tikzpicture}[sibling distance=3em,
   somenode/.style = {shape=circle, scale=0.3,fill=black,
    draw, align=center},
     level 1/.style={sibling distance=8em},
  level 2/.style={sibling distance=7em},
  level 3/.style={sibling distance=6em},
  level 4/.style={sibling distance=5em},
  level 5/.style={sibling distance=4em},
  level 6/.style={sibling distance=3em},
  level 7/.style={sibling distance=2em}]

   \node [shape=rectangle,fill=white] {R}
child[grow=right] {node [somenode, label=below:$a_1\wedge a_2$] {}
	child {  node [label=right:$a_{1}$]{} edge from parent[->] }
    child { node[somenode,label=below:$a_{2}\wedge a_3$] {}
    	child { node [label=right:$a_{2}$]{} edge from parent[->]}
      	child { node [somenode,color=red,label=below:$a_{3}\wedge a_4$]{} 
      		child { node [label=right:$ a_3$]{} edge from parent[->]}
      			child { node [somenode,label=below:$a_{4}\wedge a_5$]{} 
      				child { node [label=right:$ a_4$]{} edge from parent[->]}
      				child { node [somenode,label=below:$a_{5}\wedge a_6$]{} 
						child{ node [label=right:$ a_5$]{} edge from parent[->]}
						child{node [somenode,color=red,label=above:$a_{7}\wedge a_8$]{}
							child{ node [somenode,label=below:$a_{6}\wedge a_7$]{} 
									child{ node [label=right:$ a_6$]{} edge from parent[->]}
									child{ node [label=right:$ a_7$]{} edge from parent[->]}
							}
							child{node [label=right:$ a_8$]{} edge from parent[->]{}							
							}
						}      				
      				} 
      			}    		
      	}
   }      
};

\end{tikzpicture}
\caption{Computing the valuation of $\mathrm{S}_3(x)$, respectively of $\mathrm{S}_{7}(x)$.\label{fig:leaves}}
\end{center}
\end{figure}
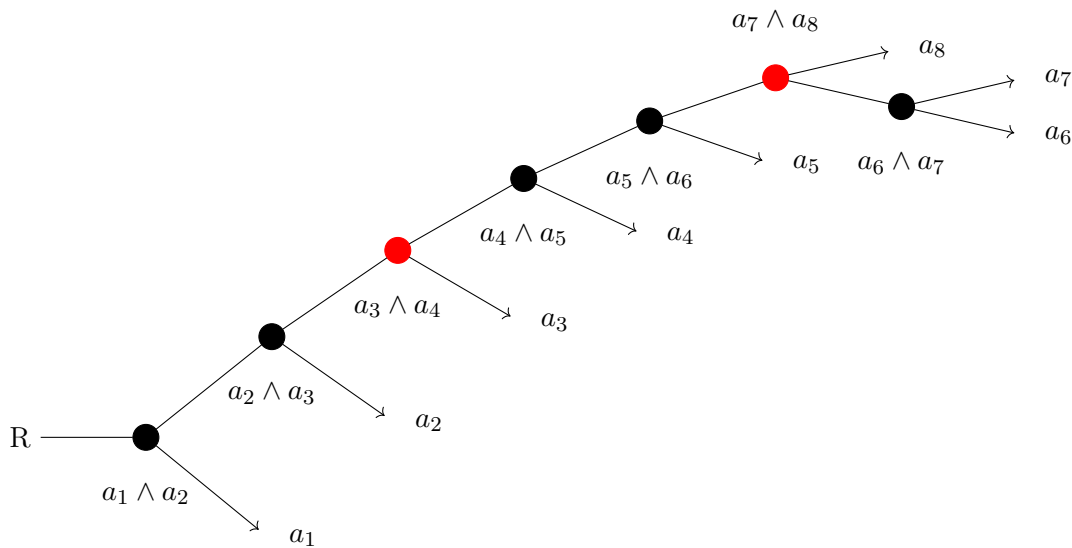
\end{example}

\begin{proof}
Let us prove Proposition \ref{prop:comparable}. By Definition \ref{def:ComparatieInContactTree}, since $v_k<_{\mathcal{CT}} v_{\ell}$,  we obtain that $\mathcal{G}_k\subset\mathcal{G}_\ell.$ The main idea is to compare the number of leaves on each vertex of the geodesic to $v_{\ell}$. Namely, on the common geodesic $\mathcal{G}_k$, the number of leaves that exit the geodesic at a vertex different from $v_k$ is the same both in the computation of $\nu_x(S_k)$ and in the computation of $\nu_x(S_\ell)$: for $\{G\in \mathcal{G}_{\ell}\mid G<_{\mathcal{CT}} a_{k-1}\wedge a_{k}\}$ we have $c_{G}(k)=c_{G}(\ell)$. Furthermore, in the case of $\nu_x(S_k)$, the number of leaves that exit the geodesic at $v_k$ (denoted by $c_{a_k\wedge a_{k+1}}(k)$) is the sum of all the leaves that exit the geodesic $\mathcal{G}_\ell$ computed for $\nu_x(S_\ell)$ i.e. 
\begin{equation}\label{eq:suma}
\sum_{\{G\in \mathcal{G}_{\ell}\mid  a_k\wedge a_{k+1}\leq_{\mathcal{CT}}G\leq_{\mathcal{CT}} a_{\ell}\wedge a_{\ell+1}\}} c_{G}(\ell)=c_{a_k\wedge a_{k+1}}(k).
\end{equation}

Remember Proposition \ref{prop:valuationOfSi}: if we denote by $e_k:=\nu_x(v_k)$, we have $$
\nu_x(\mathrm{S}_k)=e_k+\sum_{\{G\in \mathcal{G}_k\mid G\leq_{\mathcal{CT}} a_k\wedge a_{k+1}\}} c_{G}(k)  \nu_x(G),
$$
where the coefficient $c_{G}(k)\in\mathbb{N}$ represents the number of leaves of the contact tree $\mathcal{CT}(\mathcal{A})$ whose most recent ancestor belonging to $\mathcal{G}_k$ is $G$.

Similarly, if we denote by $e_{\ell}:=\nu_x(v_{\ell})$ we have $$
\nu_x(\mathrm{S}_{\ell})=e_{\ell}+\sum_{\{G\in \mathcal{G}_{\ell}\mid G\leq_{\mathcal{CT}} a_{\ell}\wedge a_{\ell+1}\}} c_{G}(\ell)  \nu_x(G),
$$
where the coefficient $c_{G}(\ell)\in\mathbb{N}$ represents the number of leaves of the contact tree $\mathcal{CT}(\mathcal{A})$, whose most recent ancestor belonging to $\mathcal{G}_{\ell}$ is $G$.

By Definition \ref{def:CT}, since on each geodesic the integers decorating the bifurcation vertices of $\mathcal{CT}(\mathcal{A})$ form a strictly increasing subsequence, we obtain that $e_k<e_{\ell}.$

Thus $$
\nu_x(\mathrm{S}_{\ell})=e_{\ell}+
\sum_{\{G\in \mathcal{G}_{\ell}\mid G\leq_{\mathcal{CT}} a_{\ell}\wedge a_{\ell+1}\}} c_{G}(\ell)  \nu_x(G)
>$$
$$>e_k+
\sum_{\{G\in \mathcal{G}_{\ell}\mid G\leq_{\mathcal{CT}} a_{k-1}\wedge a_{k}\}} c_{G}(\ell)  \nu_x(G)+
\sum_{\{G\in \mathcal{G}_{\ell}\mid  a_k\wedge a_{k+1}\leq_{\mathcal{CT}}G\leq_{\mathcal{CT}} a_{\ell}\wedge a_{\ell+1}\}} c_{G}(\ell)  \nu_x(G).$$

Since for $\{G\in \mathcal{G}_{\ell}\mid G<_{\mathcal{CT}} a_{k-1}\wedge a_{k}\}$ we have $c_{G}(k)=c_{G}(\ell)$ and for $\{G\in \mathcal{G}_{\ell}\mid  a_k\wedge a_{k+1}\leq_{\mathcal{CT}}G\leq_{\mathcal{CT}} a_{\ell}\wedge a_{\ell+1}\}$, we have $\nu_x(G)>e_k$, we obtain
$$\nu_x(\mathrm{S}_{\ell})>e_k+
\sum_{\{G\in \mathcal{G}_{\ell}\mid G<_{\mathcal{CT}} a_{k-1}\wedge a_{k}\}} c_{G}(k)  \nu_x(G)+
\sum_{\{G\in \mathcal{G}_{\ell}\mid  a_k\wedge a_{k+1}\leq_{\mathcal{CT}}G\leq_{\mathcal{CT}} a_{\ell}\wedge a_{\ell+1}\}} c_{G}(\ell)  e_k=$$

$$=e_k+
\sum_{\{G\in \mathcal{G}_{\ell}\mid G<_{\mathcal{CT}} a_{k-1}\wedge a_{k}\}} c_{G}(k)  \nu_x(G)+
e_k\sum_{\{G\in \mathcal{G}_{\ell}\mid  a_k\wedge a_{k+1}\leq_{\mathcal{CT}}G\leq_{\mathcal{CT}} a_{\ell}\wedge a_{\ell+1}\}} c_{G}(\ell).$$

By formula \ref{eq:suma}, $\sum_{\{G\in \mathcal{G}_{\ell}\mid  a_k\wedge a_{k+1}\leq_{\mathcal{CT}}G\leq_{\mathcal{CT}} a_{\ell}\wedge a_{\ell+1}\}} c_{G}(\ell)=c_{a_k\wedge a_{k+1}}(k)$,
which implies that $$\nu_x(\mathrm{S}_{\ell})>e_k+\sum_{\{G\in \mathcal{G}_{\ell}\mid G<_{\mathcal{CT}} a_{k-1}\wedge a_{k}\}} c_{G}(k)  \nu_x(G)+
e_k c_{a_{k}\wedge a_{k+1}}(k)= $$

$$=e_k+\sum_{\{G\in \mathcal{G}_k\mid G\leq_{\mathcal{CT}} a_k\wedge a_{k+1}\}} c_{G}(k)  \nu_x(G)=\nu_x(\mathrm{S}_k).$$

\end{proof}

\begin{remark}\label{remark:eIncreasing}
By Definition \ref{def:CT} and by the initial hypothesis that $a_1\prec_{+} a_2\prec_{+} \ldots\prec_{+} a_{m+1}$, if $V_k <_{\mathcal{CT}} V_{\ell}$, then one has $e_k<e_{\ell}$, which is equivalent to $\delta_k\succ_{+} \delta_{\ell}$. By Proposition \ref{prop:comparable}, $\mathrm{S}_k\succ_{+} \mathrm{S}_{\ell}.$ 
However, if $V_{\ell}$ and $V_k$ are not comparable, then it may happen that either $\mathrm{S}_k\succ_{+} \mathrm{S}_{\ell},$ or $\mathrm{S}_k\prec_{+} \mathrm{S}_{\ell}.$ See Example \ref{Ex:CtrExEtagesSuffitPas}.

\end{remark}
\begin{example}\label{Ex:CtrExEtagesSuffitPas}
Let us study the contact tree given in Figure \ref{Fig:ctrExCtctTree} below and let us compute the valuations of $\mathrm{S}_7$, $\mathrm{S}_1$ and $\mathrm{S}_3$ using Corollary \ref{corol:ValuationS_iAll}. We get  $\nu_x(\mathrm{S}_7)=2\cdot 6+2\cdot 4 +4=24$, $\nu_x(\mathrm{S}_1)=2\cdot 8+1\cdot 6 +3 \cdot 3 + 2\cdot 2 + 8=43$  and  
$\nu_x(\mathrm{S}_3)=2\cdot 2+3 \cdot 6 + 3=25.$ Here the vertices $a_7\wedge a_8$ and $a_3\wedge a_4$ are not comparable. We cannot apply Proposition \ref{prop:comparable}. We have $e_3=3 < e_7=4$ thus $\delta_7\prec_+ \delta_3$, i.e. $S_3 \prec_+ S_7$. On the other hand, for other two non comparable vertices $a_7\wedge a_8$ and $a_1\wedge a_2$ we have $e_7=4<e_1=8$ thus $\delta_1\prec_+ \delta_7$ but we obtain the opposite inequality, namely $S_1 \prec_+ S_7$. 
\begin{figure}[H]
\centering
\includegraphics[scale=0.12,trim={0 4cm 0 2cm},clip]{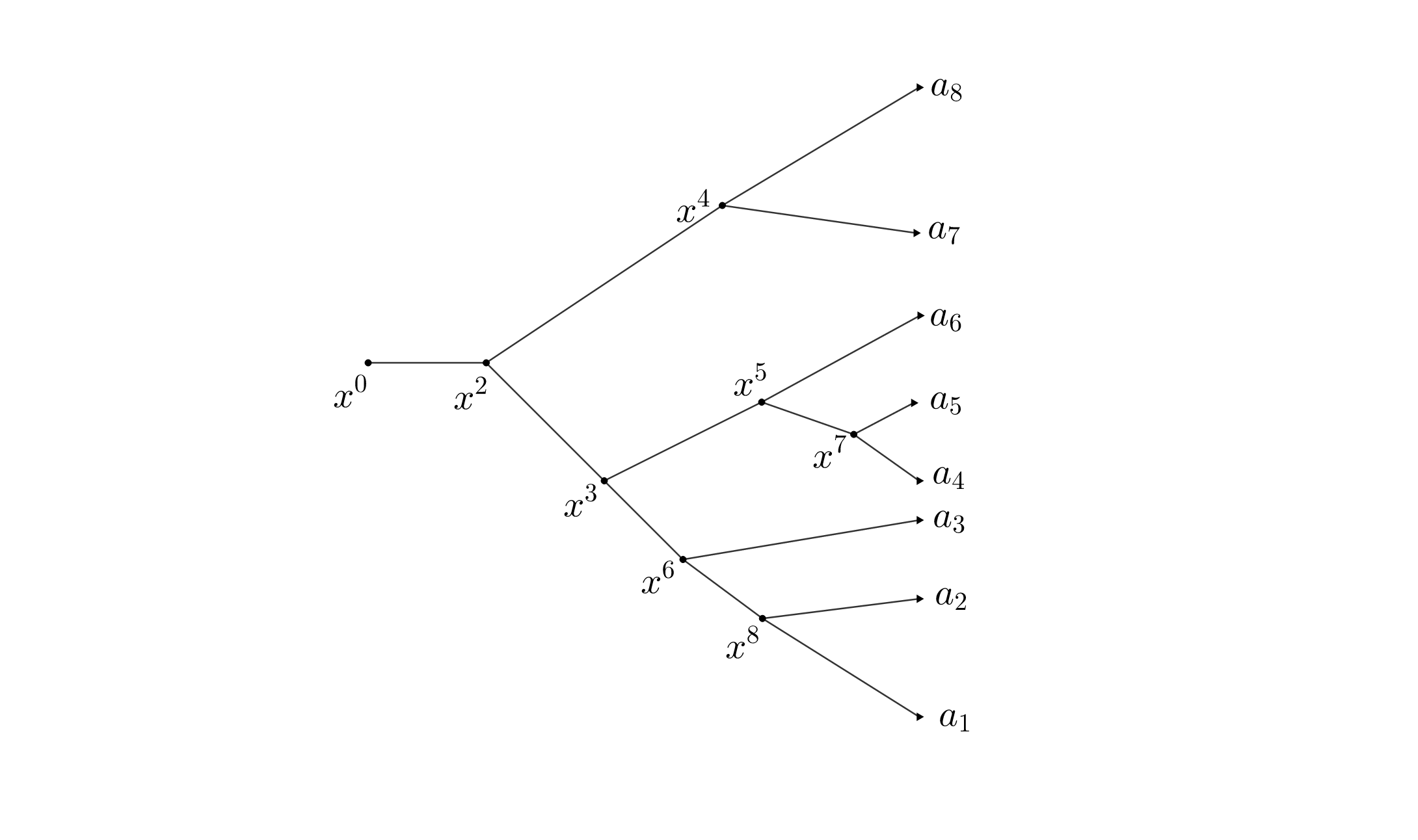}   
\caption{Illustration for Example \ref{Ex:CtrExEtagesSuffitPas}. \label{Fig:ctrExCtctTree}}
\end{figure}

\end{example}

Proposition \ref{prop:valuationOfSumOfAreas}  below represents one of the main arguments in the proof of our main result, namely Theorem \ref{Th:OriginalConstructionOneVariable}.

\begin{proposition}\label{prop:valuationOfSumOfAreas}
Let us consider the  contact tree $\mathcal{CT}(\mathcal{A})$ of the roots $a_i(x)\in\mathcal{A},$ $i=1,\ldots,m+1$ of the polynomial $P_x(y).$ Suppose that the contact tree is complete plane binary. If $i<j$ and $\mathrm{S}_k$ denotes the unique area corresponding to the bifurcation vertex $a_i\wedge a_j$, then 

$$\nu_x\left (\pm\mathrm{S}_i\pm\mathrm{S}_{i+1}
\pm\ldots\pm\mathrm{S}_{j-1})=\nu_x(\mathrm{S}_k\right ).$$
 
\end{proposition}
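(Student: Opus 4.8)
The plan is to reduce the $x$-valuation of the signed sum to that of a single dominant summand, combining the explicit formula of Proposition \ref{prop:valuationOfSi} with the monotonicity statement of Proposition \ref{prop:comparable}.

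First I would set $v_\ell := a_\ell\wedge a_{\ell+1}$ for $\ell=i,\ldots,j-1$, so that $\mathrm{S}_\ell$ is precisely the area attached to the bifurcation vertex $v_\ell$. By Corollary \ref{coroll:GeneralWedge} the vertex $a_i\wedge a_j$ coincides with the iterated meet $a_i\wedge a_{i+1}\wedge\cdots\wedge a_j$; since the meet of a family of leaves is $\leq_{\mathcal{CT}}$ the meet of any subfamily, this yields $a_i\wedge a_j\leq_{\mathcal{CT}} v_\ell$ for every $\ell$ in the range. Next, because the contact tree is complete plane binary, Remark \ref{remark:GenericContactTreeBijection} tells me that $\ell\mapsto v_\ell$ is injective with image the full set of internal vertices; in particular the two children of $a_i\wedge a_j$ split the leaves $a_i,\ldots,a_j$ into two consecutive blocks, and the unique index $k$ at which these blocks abut satisfies $v_k=a_k\wedge a_{k+1}=a_i\wedge a_j$ with $i\leq k\leq j-1$. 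This is the index $k$ appearing in the statement.

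Then, for every $\ell\neq k$ in $\{i,\ldots,j-1\}$, injectivity forces $v_k\neq v_\ell$, hence $v_k<_{\mathcal{CT}} v_\ell$ strictly. Applying Proposition \ref{prop:comparable}---more precisely the strict inequality of valuations $\nu_x(\mathrm{S}_k)<\nu_x(\mathrm{S}_\ell)$ that its proof establishes---I conclude that among the $j-i$ summands of $\pm\mathrm{S}_i\pm\cdots\pm\mathrm{S}_{j-1}$ the term $\mathrm{S}_k$ has strictly the smallest $x$-valuation. Consequently the leading term in $x$ of the whole sum equals $\pm$ the leading term of $\mathrm{S}_k$, no cancellation can occur regardless of the choice of signs, and $\nu_x(\pm\mathrm{S}_i\pm\cdots\pm\mathrm{S}_{j-1})=\nu_x(\mathrm{S}_k)$.

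The main obstacle I anticipate is the second paragraph: one must make sure that the index $k$ singled out by $a_i\wedge a_j$ really lies in $\{i,\ldots,j-1\}$ and, above all, that $v_k$ is \emph{strictly} below every other $v_\ell$---it is strictness that rules out cancellation among the $\pm$ signs. This is exactly where completeness of the plane binary tree is indispensable: for a non-binary contact tree several of the $v_\ell$ could coincide with $a_i\wedge a_j$, the minimal valuation would be attained more than once, and the signed sum could a priori have strictly larger valuation. I would therefore spell out the injectivity in Remark \ref{remark:GenericContactTreeBijection} explicitly and verify that the argument in the proof of Proposition \ref{prop:comparable} indeed delivers $\nu_x(\mathrm{S}_k)<\nu_x(\mathrm{S}_\ell)$ and not merely the relation $\mathrm{S}_k\succ_{+}\mathrm{S}_\ell$.
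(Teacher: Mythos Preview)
Your proposal is correct and follows essentially the same route as the paper: use Corollary~\ref{coroll:GeneralWedge} to see that $v_k=a_i\wedge a_j\leq_{\mathcal{CT}} v_\ell$ for all $\ell\in\{i,\ldots,j-1\}$, invoke the binary hypothesis to make this inequality strict for $\ell\neq k$, then apply Proposition~\ref{prop:comparable} to conclude that $\mathrm{S}_k$ has the unique minimal valuation and hence dominates the signed sum. Your write-up is in fact more careful than the paper's about two points---that $k$ actually lies in $\{i,\ldots,j-1\}$, and that the needed input from Proposition~\ref{prop:comparable} is the strict inequality of valuations rather than merely $\mathrm{S}_k\succ_{+}\mathrm{S}_\ell$---both of which the paper leaves implicit.
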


\begin{proof}
Let us take $\ell \in\{i,\ldots,j-1\}$ such that $\ell \neq k$. By Corollary \ref{coroll:GeneralWedge}, we obtain $v_k<_{\mathcal{CT}}v_\ell.$ Now by Proposition \ref{prop:comparable}, $\nu_x(\mathrm{S}_{\ell})>\nu_x(\mathrm{S}_k).$ Thus $\mathrm{S}_k$ has the minimal valuation in the sum $\pm\mathrm{S}_i\pm\mathrm{S}_{i+1}\pm\ldots\pm\mathrm{S}_{j-1}$. Since the contact tree is binary, this valuation appears only once and we conclude that 
$\nu_x\left (\pm\mathrm{S}_i\pm\mathrm{S}_{i+1}\pm\cdots
\pm\mathrm{S}_{j-1}\right )=\nu_x(\mathrm{S}_k).$
\end{proof}

\section{Separable permutations}
While we know that the existence of Morse polynomials with any given snake was proven by Arnold, the goal of this paper is to give a \emph{constructive} answer, for a special class of snakes, namely the separable ones. 

Let us first define the separable permutations.

\begin{definition}\cite[page 57]{Ki}\label{def:directSkewSum}
Let $\pi:\{1,\ldots,m\}\rightarrow \{1,\ldots,m\}$ and $\sigma:\{1,\ldots,n\}\rightarrow\{1,\ldots,n\}$ be two permutations. Then their \defi{direct sum} $\pi \oplus \sigma$ and their \defi{skew sum} $\pi \ominus \sigma$ are defined as follows:

$\pi \oplus \sigma(i):=\begin{cases}
\pi(i), i=1,\ldots,m;\\
\sigma(i-m)+m,i=m+1,\ldots,m+n.
\end{cases}$

$\pi \ominus \sigma(i):=\begin{cases}
\pi(i)+n, i=1,\ldots,m;\\
\sigma(i-m),i=m+1,\ldots,m+n.
\end{cases}$
\end{definition}

\begin{example}
Let us consider the following two permutations:  $\pi:=\begin{pmatrix}
    1 & 2 & 3& 4 & 5&6\\
    4 &6& 5& 3& 1&2
  \end{pmatrix}
  $ and $\sigma:=\begin{pmatrix}
    1 & 2 & 3\\
    1 &3& 2
  \end{pmatrix}.$
  Thus by Definition \ref{def:directSkewSum}, we obtain:
    \vspace{-.5\baselineskip}
  $$\pi\oplus\sigma=\begin{pmatrix}
    1 & 2 & 3& 4 & 5&6&7&8&9\\
    4 &6& 5& 3& 1&2&7&9&8
  \end{pmatrix}
  $$
  \vspace{-.5\baselineskip}
  $$\pi\ominus\sigma=\begin{pmatrix}
    1 & 2 & 3& 4 & 5&6&7&8&9\\
    7 &9& 8& 6& 4&5&1&3&2
  \end{pmatrix}
  $$
  
  A very useful visual matrix representation of the direct sum (respectively of the skew sum) of $\pi$ and $\sigma$ can be seen in Figure \ref{fig:directSum} (respectively in Figure \ref{fig:skewSum}) below (inspired from \cite[page 4]{Ki}).

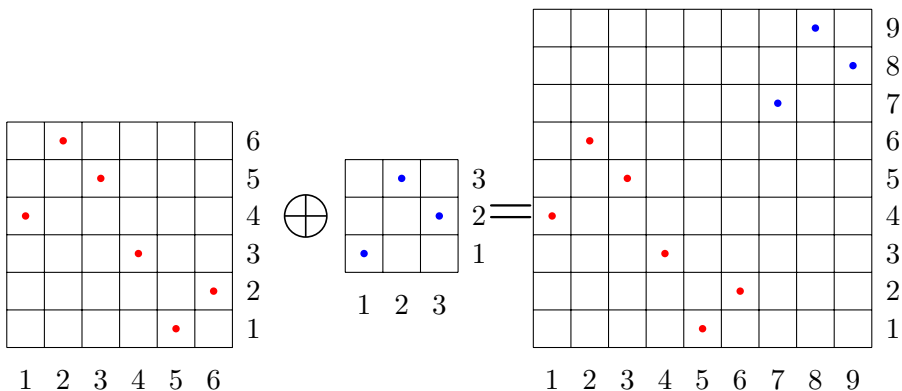
\begin{figure}[H]
\begin{center}
\begin{tikzpicture}[scale=0.5]
\draw [thin, black] (0,0) grid (6,6);
\draw [red] plot [only marks, mark=*,mark options={thick}] coordinates {(0.5,3.5) (1.5,5.5) (2.5,4.5) (3.5,2.5) (4.5,0.5) (5.5,1.5)};

 \node at (0.5,0.3) [below] {$1$};
 \node at (1.5,0.3) [below] {$2$};
 \node at (2.5,0.3) [below] {$3$};
 \node at (3.5,0.3) [below] {$4$};
 \node at (4.5,0.3) [below] {$5$};
 \node at (5.5,0.3) [below] {$6$};
 
  \node at (6.1,0.5) [right] {$1$};
 \node at (6.1,1.5) [right] {$2$};
 \node at (6.1,2.5) [right] {$3$};
 \node at (6.1,3.5) [right] {$4$};
 \node at (6.1,4.5) [right] {$5$};
 \node at (6.1,5.5) [right] {$6$};

\node at (7,3.5) [font=\fontsize{140}{144}\selectfont] { $\oplus$};

\draw [thin, black] (9,2) grid (12,5);
\draw [blue] plot [only marks, mark=*,mark options={thick}] coordinates {(9.5,2.5) (10.5,4.5) (11.5,3.5) };

 \node at (9.5,2.3) [below] {$1$};
 \node at (10.5,2.3) [below] {$2$};
 \node at (11.5,2.3) [below] {$3$};

  \node at (12.1,2.5) [right] {$1$};
 \node at (12.1,3.5) [right] {$2$};
 \node at (12.1,4.5) [right] {$3$};

\node at (12.5,3.5) [font=\fontsize{30}{30}\selectfont] { $=\ \ $};

\draw [thin, black] (14,0) grid (23,9);
\draw [red] plot [only marks, mark=*,mark options={thick}] coordinates {(14.5,3.5) (15.5,5.5) (16.5,4.5) (17.5,2.5) (18.5,0.5) (19.5,1.5) };
\draw [blue] plot [only marks, mark=*,mark options={thick}] coordinates { (20.5,6.5) (21.5,8.5) (22.5,7.5)};

 \node at (14.5,0.3) [below] {$1$};
 \node at (15.5,0.3) [below] {$2$};
 \node at (16.5,0.3) [below] {$3$};
 \node at (17.5,0.3) [below] {$4$};
 \node at (18.5,0.3) [below] {$5$};
 \node at (19.5,0.3) [below] {$6$};
\node at (20.5,0.3) [below] {$7$};
\node at (21.5,0.3) [below] {$8$};
\node at (22.5,0.3) [below] {$9$};

  \node at (23.1,0.5) [right] {$1$};
 \node at (23.1,1.5) [right] {$2$};
 \node at (23.1,2.5) [right] {$3$};
 \node at (23.1,3.5) [right] {$4$};
 \node at (23.1,4.5) [right] {$5$};
 \node at (23.1,5.5) [right] {$6$};
\node at (23.1,6.5) [right] {$7$};
\node at (23.1,7.5) [right] {$8$};
\node at (23.1,8.5) [right] {$9$};

\end{tikzpicture}
\end{center}

\caption{The direct sum $\pi\oplus\sigma$.\label{fig:directSum}}
\end{figure}

\begin{figure}[H]
\begin{center}
\begin{tikzpicture}[scale=0.5]
\draw [thin, black] (0,0) grid (6,6);
\draw [red] plot [only marks, mark=*,mark options={thick}] coordinates {(0.5,3.5) (1.5,5.5) (2.5,4.5) (3.5,2.5) (4.5,0.5) (5.5,1.5)};

 \node at (0.5,0.3) [below] {$1$};
 \node at (1.5,0.3) [below] {$2$};
 \node at (2.5,0.3) [below] {$3$};
 \node at (3.5,0.3) [below] {$4$};
 \node at (4.5,0.3) [below] {$5$};
 \node at (5.5,0.3) [below] {$6$};
 
  \node at (6.1,0.5) [right] {$1$};
 \node at (6.1,1.5) [right] {$2$};
 \node at (6.1,2.5) [right] {$3$};
 \node at (6.1,3.5) [right] {$4$};
 \node at (6.1,4.5) [right] {$5$};
 \node at (6.1,5.5) [right] {$6$};

\node at (7,3.5) [font=\fontsize{140}{144}\selectfont] { $\ominus$};

\draw [thin, black] (9,2) grid (12,5);
\draw [blue] plot [only marks, mark=*,mark options={thick}] coordinates {(9.5,2.5) (10.5,4.5) (11.5,3.5) };

 \node at (9.5,2.3) [below] {$1$};
 \node at (10.5,2.3) [below] {$2$};
 \node at (11.5,2.3) [below] {$3$};

  \node at (12.1,2.5) [right] {$1$};
 \node at (12.1,3.5) [right] {$2$};
 \node at (12.1,4.5) [right] {$3$};

\node at (12.5,3.5) [font=\fontsize{30}{30}\selectfont] { $=\ \ $};

\draw [thin, black] (14,0) grid (23,9);
\draw [red] plot [only marks, mark=*,mark options={thick}] coordinates {(14.5,6.5) (15.5,8.5) (16.5,7.5) (17.5,5.5) (18.5,3.5) (19.5,4.5) };
\draw [blue] plot [only marks, mark=*,mark options={thick}] coordinates { (20.5,0.5) (21.5,2.5) (22.5,1.5)};

 \node at (14.5,0.3) [below] {$1$};
 \node at (15.5,0.3) [below] {$2$};
 \node at (16.5,0.3) [below] {$3$};
 \node at (17.5,0.3) [below] {$4$};
 \node at (18.5,0.3) [below] {$5$};
 \node at (19.5,0.3) [below] {$6$};
\node at (20.5,0.3) [below] {$7$};
\node at (21.5,0.3) [below] {$8$};
\node at (22.5,0.3) [below] {$9$};

  \node at (23.1,0.5) [right] {$1$};
 \node at (23.1,1.5) [right] {$2$};
 \node at (23.1,2.5) [right] {$3$};
 \node at (23.1,3.5) [right] {$4$};
 \node at (23.1,4.5) [right] {$5$};
 \node at (23.1,5.5) [right] {$6$};
\node at (23.1,6.5) [right] {$7$};
\node at (23.1,7.5) [right] {$8$};
\node at (23.1,8.5) [right] {$9$};

\end{tikzpicture}
\end{center}

\caption{The skew sum $\pi\ominus\sigma$.\label{fig:skewSum}}
\end{figure}
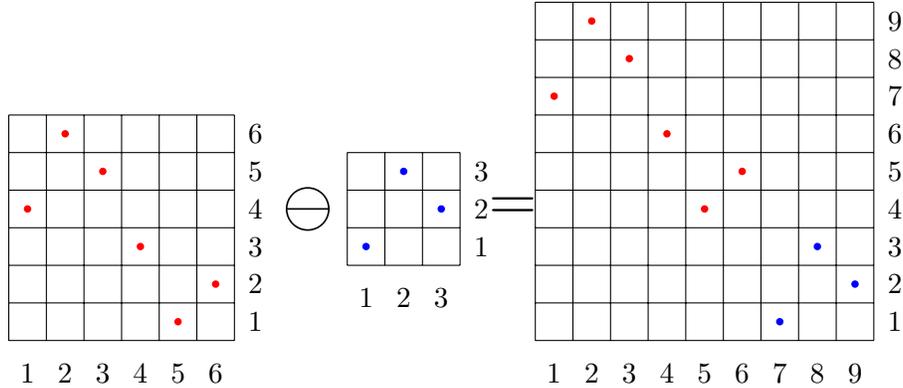
\end{example}

The notion of separable permutation was introduced in \cite{BBL}.

\begin{definition}\cite[page 57]{Ki}\label{def:separablePerm}
A \defi{separable permutation} is a permutation that is obtained by applying several times the $\oplus$ and $\ominus$ operations to the identity permutation of a single element, denoted by $\boxdot:=\begin{pmatrix}
    1\\
   1
  \end{pmatrix}$.
\end{definition}

See Example \ref{ex:DecompositionsSEp}.

\begin{definition}\label{def:decomposition}
Given  a separable permutation $\sigma$, \defi{a decomposition of} $\sigma$ consists of a sequence of operations $\oplus$ and $\ominus$ applied to the identity permutation $\boxdot$, such that the result gives us $\sigma.$
\end{definition}

\begin{remark}\label{remark:associativeominusoplus}
The $\oplus$ operation is (individually) associative and the $\ominus$ operation is (individually) associative (see \cite[page 2]{AHP}). Therefore, a separable permutation may have several distinct decompositions.
\end{remark}

For a new characterisation of the set of separable permutations in terms of graphs of univariate polynomials in the neighbourhood of a common zero and in terms of sub-patterns, the reader should refer to \' Etienne Ghys's recent book \cite[page 27]{Gh1}.

\begin{definition}\cite[page 5]{Ki} 
An \defi{interval} is a non-empty sequence of contiguous integers. 
\end{definition}

\begin{example*}
For instance, a permutation is an interval.
\end{example*}

Recall Definition \ref{Def:rootedCompleteTree} of an end-rooted complete plane binary tree. The following definition of a binary separating tree associated to a separable permutation follows the one given in \cite[page 280]{BBL}. 

\begin{definition}\label{def:binSepTreeAssoc}
If $\sigma$ is a separable permutation of $\{1,2,\ldots,m+1\}$, then \defi{a binary separating tree associated to the separable permutation} $\sigma$ is a complete plane binary tree $T$ such that:
\begin{enumerate}
\item its root is at the top;

\item its leaves are decorated with $\sigma(1)$, $\sigma(2)$,\ldots, $\sigma(m+1)$ in this order from left to right;
\item for every internal vertex $v$, the leaves (seen as a non-ordred set of numbers, for instance (2 3 1)) of the subtree of $v$ form an interval, which we call \textbf{the interval of the node} $v$;
\item let us denote by $v_{left}$ and by $v_{right}$ the left child and the right child of the node $v$, respectively; the internal vertices of $T$ are decorated with $\oplus$ or $\ominus$  as follows:

-if all the numbers of the interval of $v_{left}$ are smaller than all the numbers of the interval of $v_{right}$, then $v$ is \textbf{a positive vertex} and is decorated with the $\oplus$ sign;

-if all the numbers of the interval of $v_{left}$ are bigger than all the numbers of the interval of $v_{right}$, then $v$ is \textbf{a negative vertex} and is decorated with the $\ominus$ sign.
\end{enumerate}
To obtain a separating tree, one should follow step by step a decomposition in direct sums and skew sums of a separable permutation. 
\end{definition}

\begin{remark*}
To a separable permutation one may associate several binary separating trees: see Example \ref{ex:DecompositionsSEp}. This is due to the associativity mentioned in Remark \ref{remark:associativeominusoplus}.  
\end{remark*}

\begin{example}\label{ex:DecompositionsSEp}
An example of a separable permutation is
$\sigma:=\begin{pmatrix}
    1 &2& 3& 4& 5& 6& 7 \\
   6&7&4&5&1&3&2
  \end{pmatrix}$. Two possible ways in which $\sigma$ can be decomposed by repeatedly applying the $\oplus$ and $\ominus$ operations are presented in Figure \ref{fig:descomp1} and in Figure \ref{fig:descomp2} . The algorithm from Definition \ref{def:binSepTreeAssoc} gives us two binary separating trees of $\sigma$. The non-unicity of the decompositions of a separable permutation is due to the associativity of the operation $\oplus$ and of the associativity of the operation $\ominus$.

\begin{figure}[H]

\centering
\includegraphics[scale=0.3]{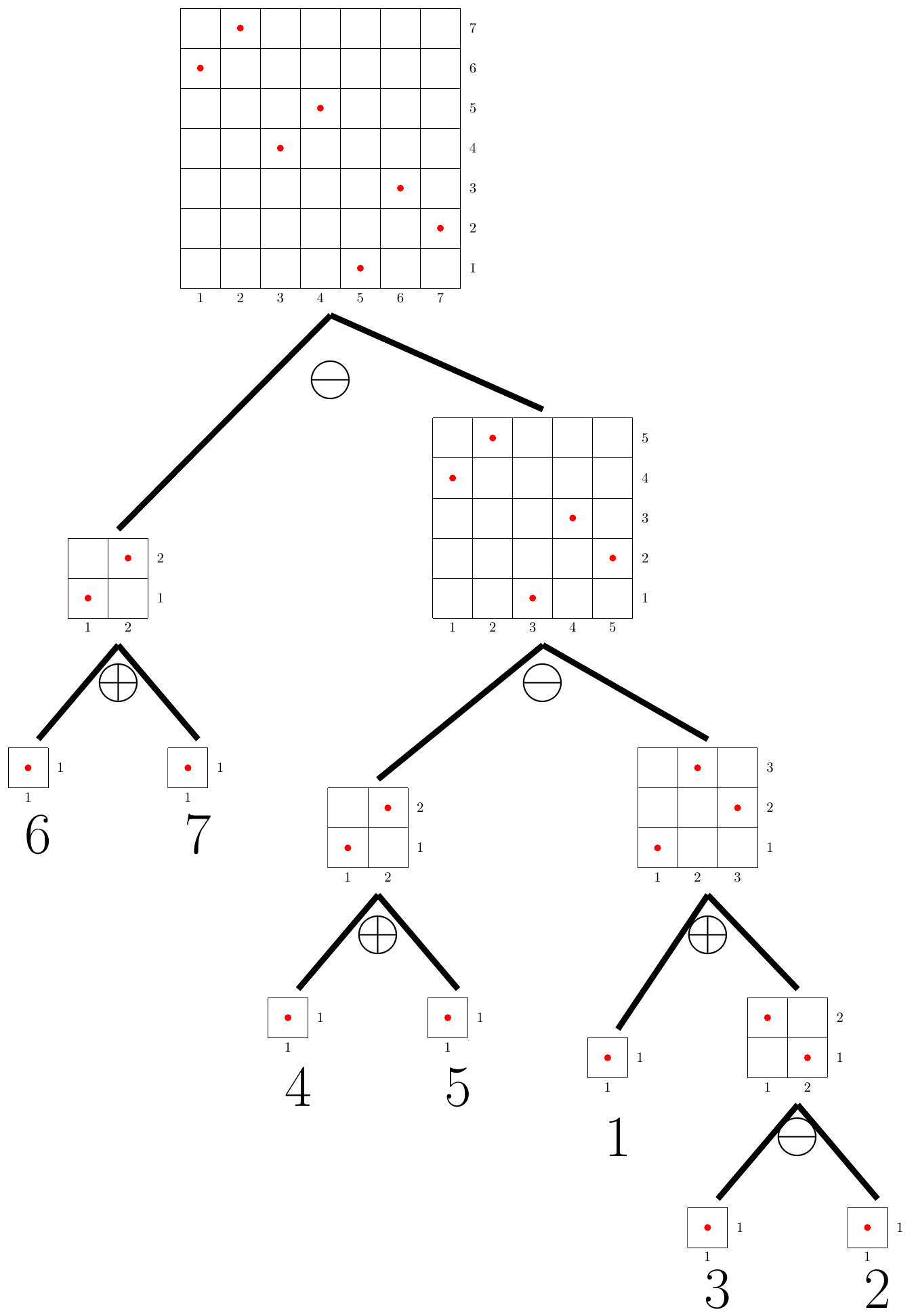} 
\caption{One possible decomposition: $\sigma=(\boxdot\oplus \boxdot)\ominus ((\boxdot\oplus \boxdot)\ominus (\boxdot\oplus(\boxdot\ominus \boxdot))).$  \label{fig:descomp1}}
\end{figure}

\begin{figure}[H]
\centering
\includegraphics[scale=0.3]{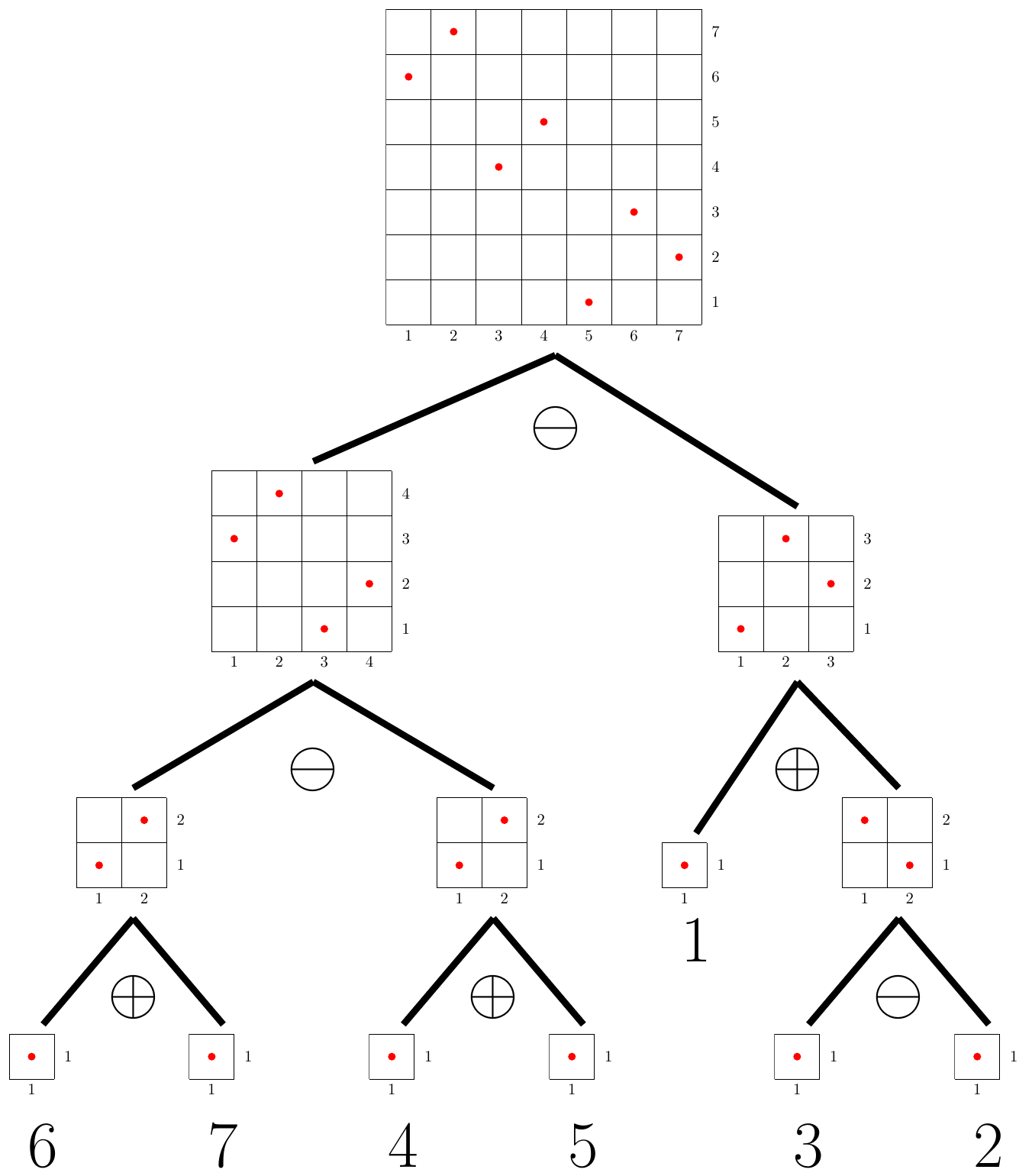} 
\caption{Another possible decomposition: $\sigma=((\boxdot\oplus\boxdot)\ominus (\boxdot \oplus \boxdot))\ominus (\boxdot\oplus(\boxdot\ominus \boxdot)).$  \label{fig:descomp2}}
\end{figure}
\end{example}

\begin{remark*}\cite[page 3]{BoRo}
If one also considers non-binary separating trees, then each separable permutation possesses a unique contracted tree, obtained from any of its binary separating trees by contracting all edges between vertices decorated with the same sign (see Figure \ref{fig:descomp3}). This is again due to the associativity properties of $\oplus$, respectively $\ominus$, operations. We shall not go into further details, since our interest are the binary trees associated to separable permutations. See \cite[page 4]{AHP}.
\end{remark*}
\begin{example}
Let us consider the same separable permutation $\sigma:=\begin{pmatrix}
    1 &2& 3& 4& 5& 6& 7 \\
   6&7&4&5&1&3&2
  \end{pmatrix}$ from Example \ref{ex:DecompositionsSEp}. Thanks to the associativity of the $\ominus$ operation, we can contract any of its two binary separating trees  (see Figure \ref{fig:descomp1} and Figure \ref{fig:descomp2}) in a non-binary contracted tree (in Figure \ref{fig:descomp3}). This can be repeated every time a parent and its child have the same sign (here, $\ominus$).

\begin{figure}[H]
\centering
\includegraphics[scale=0.3]{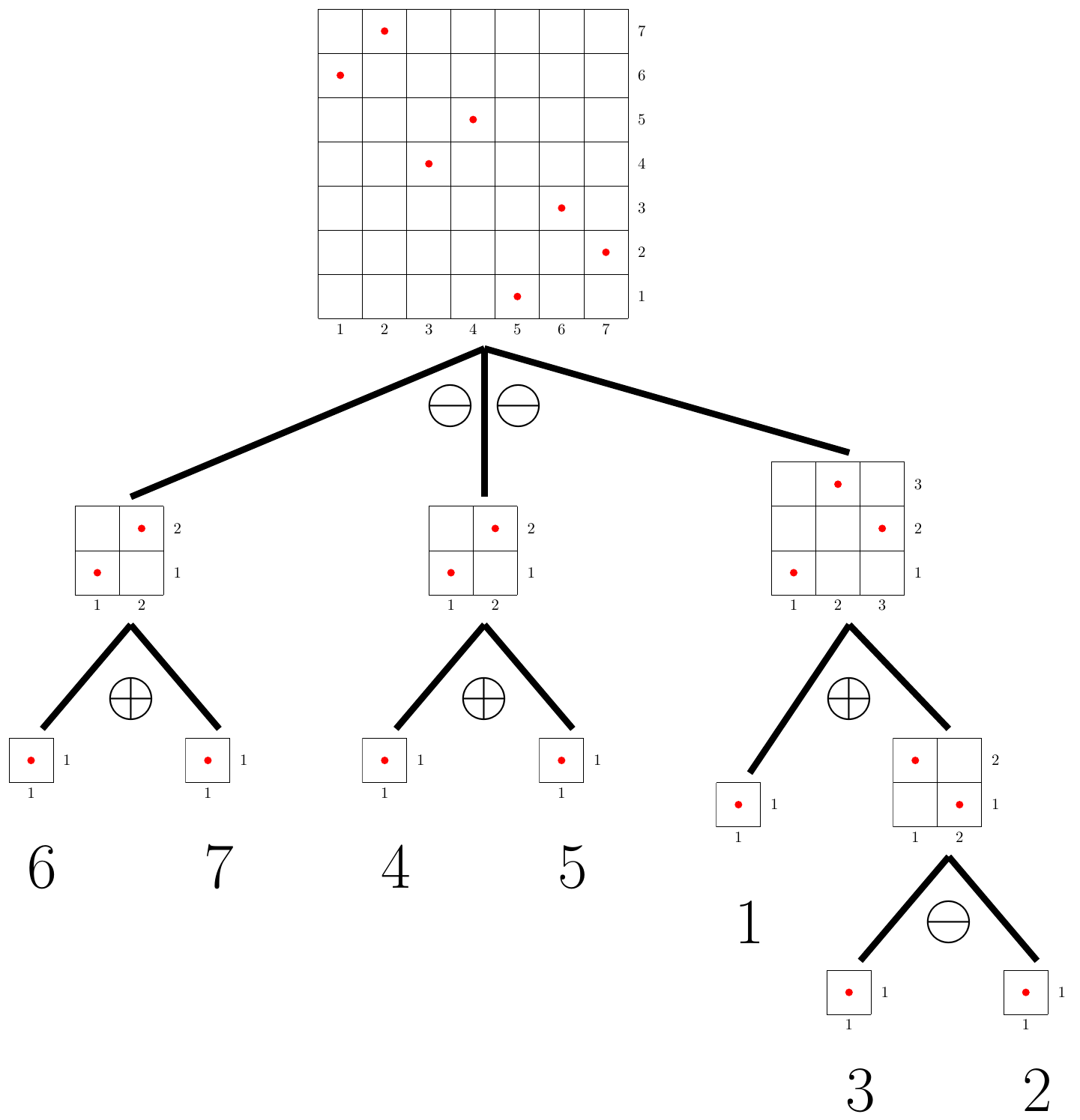} 
\caption{The associativity of $\ominus$ gives us $\sigma=(\boxdot\oplus\boxdot)\ominus (\boxdot \oplus \boxdot)\ominus (\boxdot\oplus(\boxdot\ominus \boxdot))$ and the unique contracted tree associated to a separable permutation from any of its binary separating trees. \label{fig:descomp3}}
\end{figure}
\end{example}

\begin{remark}\label{remark:recursivelyDefinedSets}
Below we provide a recursive definition of the set of complete plane binary trees (inspired from \cite[page 308]{Kn}). The first step is the basic one: we specify an initial collection of elements of the set. The second step is the recursive one: we give a rule to form new elements from those already known to be in the set.

The advantage of using the recursion for this definition lies in the fact that we will use structural induction (see \cite{Shaf}) for the proof of Proposition \ref{prop:BijectionDecomposAndTress}.

The set of \defi{complete plane binary trees} is defined recursively as follows:
\begin{enumerate}
\item the tree consisting of a single vertex is a complete plane binary tree whose root is itself.
\item if $\mathcal{T}_1$ and $\mathcal{T}_2$ are disjoint complete plane binary trees whose roots are $\mathcal{R}_1$ and $\mathcal{R}_2$ respectively, let us denote by $\mathcal{T}$ the tree consisting of a root $\mathcal{R}$ with edges connecting $\mathcal{R}$ to the roots $\mathcal{R}_1$ and $\mathcal{R}_2$ such that $\mathcal{T}_1$ is the left subtree and $\mathcal{T}_2$ is the right subtree. Then $\mathcal{T}$ is also a complete plane binary tree.
\end{enumerate} 
\end{remark}

By Definition \ref{def:separablePerm}, the separable permutations have a recursive structure. We give now a recursive definition (see Remark \ref{remark:recursivelyDefinedSets}) of these binary decompositions that are obtained by applying several times the $\oplus$ and $\ominus$ operations. 

\begin{definition}\label{def:binaryDecomp}
\defi{The set of binary decompositions} is defined recursively as follows:
\begin{enumerate}
\item the trivial permutation $\boxdot:=\begin{pmatrix}
    1\\
   1
  \end{pmatrix}$ is itself a binary decomposition; 
\item if $d_1$ and $d_2$ are two binary decompositions, then the direct sum $d_1 \oplus d_2$ and the skew sum $d_1 \ominus d_2$ are both binary decompositions.

\end{enumerate}
\end{definition}

The above recursive definition will play an important role in establishing a bijection between the set of all the binary decompositions of a separable permutation $\sigma$ and the set of all the binary separating trees associated to $\sigma$ (recall the recursive Definition \ref{DefcompleteTree} and Definition \ref{def:binSepTreeAssoc}), as Proposition \ref{prop:BijectionDecomposAndTress} shows.

\begin{notation}
If $T_1$ and $T_2$ are two complete plane binary trees, then by $T_1\oplus T_2$ (respectively $T_1\ominus T_2$) we denote the new complete plane binary tree obtained by creating a new vertex decorated with the sign $\oplus$ (respectively $\ominus$) such that its left subtree is $T_1$ and its right subtree is $T_2$.
\end{notation}

A reformulation of Definition \ref{def:separablePerm} is the following:
\begin{proposition}\cite[page 280]{BBL}\label{Prop:charactOfSeparable}
A permutation $\sigma$ is separable \emph{if and only if} there exists a \textbf{binary} tree that is a separating tree  associated to $\sigma$.
\end{proposition}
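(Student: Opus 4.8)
The plan is to prove the two implications separately, in each case by structural induction that mirrors the recursive definitions in play: the recursive Definition \ref{def:binaryDecomp} of binary decompositions on one side, and the recursive description of complete plane binary trees (Remark \ref{remark:recursivelyDefinedSets}) together with Definition \ref{def:binSepTreeAssoc} on the other. The only elementary fact I will need repeatedly is that the image of an interval of integers under a translation $n\mapsto n+c$ is again an interval, and that the set of leaves of a subtree of a plane tree, read from left to right, always occupies a contiguous block of leaf positions of the ambient tree.

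First I would treat the implication ``$\sigma$ separable $\Rightarrow$ there is a binary separating tree associated to $\sigma$''. By Definition \ref{def:separablePerm}, $\sigma$ is the value of some binary decomposition $d$ in the sense of Definition \ref{def:binaryDecomp}, and I would induct on the structure of $d$. If $d=\boxdot$, the one-vertex tree is trivially a binary separating tree of $\boxdot$. If $d=d_1\oplus d_2$, where $d_i$ has value a separable permutation $\pi_i$ of $\{1,\dots,n_i\}$, the induction hypothesis furnishes binary separating trees $T_1,T_2$ of $\pi_1,\pi_2$; I then form the tree $T:=T_1\oplus T_2$, that is, a new root decorated with $\oplus$ whose left subtree is $T_1$ and whose right subtree is $T_2$ with every leaf label increased by $n_1$. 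One then checks the four conditions of Definition \ref{def:binSepTreeAssoc}: conditions (i) and (ii) are immediate from the construction and from Definition \ref{def:directSkewSum}; for (iii) and (iv) one argues by cases on the location of an internal vertex $v$ — if $v$ lies inside $T_1$ the conditions are inherited verbatim, if $v$ lies inside $T_2$ they are inherited after translating all relevant labels by $n_1$ (using that translation preserves intervals and order), and if $v$ is the new root its subtree carries all of $\{1,\dots,n_1+n_2\}$, an interval, with the $n_1$ smallest labels on the left, so the sign $\oplus$ is correct. The case $d=d_1\ominus d_2$ is symmetric, using $\ominus$ and the skew sum.

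For the converse, I would induct on the number of leaves of a given binary separating tree $T$ of $\sigma$. If $T$ has one leaf then $\sigma=\boxdot$ is separable. Otherwise the root has two subtrees $T_L$ and $T_R$; by condition (iii) their leaf sets are intervals $I_L,I_R$ of values, and since the leaves of a subtree form a contiguous block of positions, $T_L$ and $T_R$ are themselves binary separating trees of the patterns $\pi_L,\pi_R$ obtained by normalising $I_L,I_R$ to initial segments $\{1,\dots,|I_L|\}$ and $\{1,\dots,|I_R|\}$ (this is again just translation of interval-valued labels). By the induction hypothesis $\pi_L$ and $\pi_R$ are separable, and condition (iv) tells us exactly whether $I_L$ lies entirely below $I_R$ or entirely above, hence whether $\sigma=\pi_L\oplus\pi_R$ or $\sigma=\pi_L\ominus\pi_R$; either way $\sigma$ is separable by Definition \ref{def:separablePerm}. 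Combining the two implications proves Proposition \ref{Prop:charactOfSeparable}.

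The step I expect to be the main obstacle — or at least the one demanding the most care — is the bookkeeping in conditions (iii) and (iv) during the inductive step, namely verifying that the interval and sign decorations of the \emph{sub}separating trees survive the relabelling forced by $\oplus$/$\ominus$. This is where one must be explicit that ``being an interval'' and ``being order-isomorphic to the original pattern'' are invariant under adding a fixed constant to all leaf labels, and that the plane (ordered) structure of the tree guarantees leaf sets of subtrees are positionally contiguous; once this is granted, the associativity remarks (Remark \ref{remark:associativeominusoplus}) show the construction is well defined up to the non-uniqueness already noted after Definition \ref{def:binSepTreeAssoc}.
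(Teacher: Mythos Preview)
Your argument is correct, but note that the paper does not actually supply its own proof of this proposition: it is stated with a citation to \cite[page 280]{BBL} and introduced as ``a reformulation of Definition \ref{def:separablePerm}'', with no accompanying proof block. The paper then immediately moves on to Proposition \ref{prop:BijectionDecomposAndTress}, whose short structural-induction proof (matching $\boxdot$ with the one-vertex tree, and $d_1\oplus d_2$, $d_1\ominus d_2$ with the corresponding tree gluings $T_1\oplus T_2$, $T_1\ominus T_2$) is essentially the engine you are using here. So what you have written is not so much a different route as a fleshed-out version of the argument the paper leaves implicit: your forward implication is exactly the construction underlying Proposition \ref{prop:BijectionDecomposAndTress}, and your converse is the inverse of that construction together with the observation that conditions (c) and (d) of Definition \ref{def:binSepTreeAssoc} force the root to split $\sigma$ as a direct or skew sum. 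The relabelling bookkeeping you flag as the delicate point is indeed the only thing requiring care, and your treatment of it (translation preserves intervals and order) is adequate.
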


\begin{proposition}\label{prop:BijectionDecomposAndTress}
There is a bijection between the set of all the binary decompositions of a separable permutation $\sigma$ and the set of all the binary separating trees associated to $\sigma$.
\end{proposition}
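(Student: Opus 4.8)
The plan is to prove this by structural induction, using the recursive definitions of both sides (Definition \ref{def:binaryDecomp} for binary decompositions, and the recursive characterisation of complete plane binary trees in Remark \ref{remark:recursivelyDefinedSets}). First I would make the bijection explicit: to a binary decomposition $d$ I associate a binary separating tree $\Phi(d)$ defined recursively by $\Phi(\boxdot):=$ the one-vertex tree, and $\Phi(d_1\oplus d_2):=\Phi(d_1)\oplus\Phi(d_2)$ (new root decorated $\oplus$, left subtree $\Phi(d_1)$, right subtree $\Phi(d_2)$), and symmetrically $\Phi(d_1\ominus d_2):=\Phi(d_1)\ominus\Phi(d_2)$. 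One must check that $\Phi(d)$ is genuinely a binary separating tree in the sense of Definition \ref{def:binSepTreeAssoc}: this is where the content sits, and I address it below. Conversely, to a binary separating tree $T$ associated to $\sigma$ one reads off a decomposition $\Psi(T)$ by recursing on the subtrees: if $T$ is a single leaf, $\Psi(T):=\boxdot$; otherwise the root is decorated $\oplus$ (resp.\ $\ominus$) and $\Psi(T):=\Psi(T_{left})\oplus\Psi(T_{right})$ (resp.\ $\ominus$).

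The heart of the argument is the claim that $\Phi(d_1\star d_2)$ is a separating tree for the permutation $d_1\star d_2$ (for $\star\in\{\oplus,\ominus\}$), given inductively that $\Phi(d_i)$ is a separating tree for $d_i$. For this I would verify the four conditions of Definition \ref{def:binSepTreeAssoc} for $\Phi(d_1\star d_2)$. Conditions (a) and (b) are immediate from the construction: the root is at the top, and if $\Phi(d_1)$ has leaves reading $d_1(1),\ldots,d_1(m)$ and $\Phi(d_2)$ has leaves reading $d_2(1),\ldots,d_2(n)$, then by the very definitions of $\oplus$ and $\ominus$ in Definition \ref{def:directSkewSum}, the concatenated leaf-sequence of $\Phi(d_1\star d_2)$ is exactly $(d_1\star d_2)(1),\ldots,(d_1\star d_2)(m+n)$. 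For condition (c), the interval property: the full leaf set is $\{1,\ldots,m+n\}$, an interval; for a node inside the $\Phi(d_1)$-part, in $\Phi(d_1\oplus d_2)$ its leaves are unchanged and still form an interval (they are a subset of $\{1,\ldots,m\}$ that was an interval before), while in $\Phi(d_1\ominus d_2)$ they are all shifted by $+n$, hence still an interval; symmetrically for nodes in the $\Phi(d_2)$-part. Condition (d), the sign condition at the new root, holds because in $d_1\oplus d_2$ all values on $\{1,\ldots,m\}$ (the left interval) are $\leq m$, hence strictly smaller than all values on $\{m+1,\ldots,m+n\}$ (which are $\geq m+1$), so the root is legitimately a $\oplus$-vertex; dually for $\ominus$. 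The sign conditions at old internal vertices are inherited, possibly after the common shift, which preserves all the strict inequalities between left- and right-interval values.

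Having shown $\Phi$ lands in binary separating trees and, symmetrically, that $\Psi(T)$ is a binary decomposition with $\Psi(T)=\sigma$ whenever $T$ is a separating tree for $\sigma$ (same four-condition bookkeeping run backwards, the key point being that condition (d) tells us precisely which of $\oplus,\ominus$ to write at each node, so the recursion is well-defined), it remains to check $\Phi$ and $\Psi$ are mutually inverse. Both $\Phi\circ\Psi=\mathrm{id}$ and $\Psi\circ\Phi=\mathrm{id}$ follow by a routine structural induction: on a single leaf / on $\boxdot$ both composites are the identity, and the recursive clauses of $\Phi$ and $\Psi$ are set up to undo each other at the root, while the inductive hypothesis handles the two subtrees. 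I would also note that $\Phi$ respects the permutation being decomposed — $\Phi(d)$ is a separating tree for $d$ regarded as a permutation — so the bijection restricts, for each fixed $\sigma$, to a bijection between binary decompositions yielding $\sigma$ and binary separating trees of $\sigma$, which is exactly the statement.

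I expect the main obstacle to be purely notational rather than conceptual: keeping careful track of the index shifts by $m$ (in the $\oplus$ case, applied to the right block) and by $n$ (in the $\ominus$ case, applied to the left block) so that the interval condition (c) and the sign condition (d) are checked correctly at \emph{every} internal vertex, not just the new root. Once the shift bookkeeping is organized, each of the four conditions of Definition \ref{def:binSepTreeAssoc} is verified in one line per case, and the inverse-bijection check is a formality by structural induction.
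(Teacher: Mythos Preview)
Your proposal is correct and follows essentially the same approach as the paper: a structural induction along the recursive definitions of binary decompositions and complete plane binary trees, sending $\boxdot$ to the one-vertex tree and $d_1\star d_2$ to $\Phi(d_1)\star\Phi(d_2)$, with the inverse read off from the root sign. In fact you are more thorough than the paper, which leaves the verification of the four conditions of Definition~\ref{def:binSepTreeAssoc} and the mutual-inverse check implicit, whereas you spell out the index-shift bookkeeping explicitly.
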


\begin{proof}
We give an inductive proof that follows the structure of the recursively defined sets (see Definition \ref{DefcompleteTree} and Definition \ref{def:binaryDecomp}).  
The proof has two parts: (a), where we show that the proposition holds for all the minimal structures of the set, and (b) where we prove that if it holds for the immediate substructures of a certain structure $\mathcal{S}$ then it must hold for $\mathcal{S}$ too.

\begin{enumerate}
\item First let us show by structural induction that to every binary decomposition one can associate a unique binary separating tree.

- to the trivial permutation one can associate uniquely the tree with one vertex;

- if $d_1$ and $d_2$ are two binary decompositions which have each a uniquely associated binary separating tree $T_1$ and $T_2$ respectively, then to $d_1 \oplus d_2$ (respectively $d_1 \ominus d_2$) we shall associate the unique binary separating tree $T_1 \oplus T_2$
 (respectively $T_1 \ominus T_2$).
\item A similar structural recursive proof can be given to show that to each binary separating tree one can associate a unique binary decomposition.
\end{enumerate}

\end{proof}

\begin{remark*}
Given a complete plane binary tree such that all its vertices, except from the leaves are decorated with $\oplus$ or $\ominus$ sign, we can construct the unique separable permutation corresponding to it, by using Proposition \ref{prop:BijectionDecomposAndTress} to obtain the decomposition of the permutation.

In other words, one can see any binary decomposition of a separable permutation $\sigma$ as a complete plane binary tree which is in fact one of the binary separating trees of $\sigma$. In addition, if the binary decomposition has $m$ signs, then  the tree has $m$ internal nodes and $m+1$ leaves.
\end{remark*}

\begin{example}
An example which illustrates the bijection from Proposition \ref{prop:BijectionDecomposAndTress}
is shown in Figure \ref{fig:descomp1} and Figure \ref{fig:descomp2}.
\end{example}

We have now a Corollary of Proposition \ref{Prop:charactOfSeparable}, as follows:
\begin{corollary}
A permutation $\sigma$ is separable \emph{if and only if} $\sigma$ has a \textbf{binary} separating decomposition.
\end{corollary}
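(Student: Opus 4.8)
The plan is to derive this statement as a purely formal consequence of the two results just established: Proposition \ref{Prop:charactOfSeparable}, which characterises separability by the existence of a binary separating tree associated to $\sigma$, and Proposition \ref{prop:BijectionDecomposAndTress}, which provides a bijection between the binary decompositions of $\sigma$ and the binary separating trees associated to $\sigma$. No new combinatorial construction is needed; the work consists only of chaining these two equivalences together and checking that the notion of \emph{binary separating decomposition} invoked in the statement is precisely the notion of binary decomposition of Definition \ref{def:binaryDecomp} applied in the separating context.

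For the forward implication, I would argue as follows. Assume $\sigma$ is separable. By Proposition \ref{Prop:charactOfSeparable} there exists a binary separating tree $T$ associated to $\sigma$. Applying the bijection of Proposition \ref{prop:BijectionDecomposAndTress} to $T$ yields a binary decomposition of $\sigma$; since this decomposition is the one read off from the decorations $\oplus,\ominus$ of the separating tree $T$, it is a binary separating decomposition of $\sigma$. For the converse implication, assume $\sigma$ admits a binary separating decomposition $d$. By Definition \ref{def:binaryDecomp} this is a binary decomposition of $\sigma$, so the bijection of Proposition \ref{prop:BijectionDecomposAndTress} associates to $d$ a binary separating tree $T$ associated to $\sigma$. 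Proposition \ref{Prop:charactOfSeparable} then gives that $\sigma$ is separable, which closes the equivalence.

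The only point requiring any care — and the closest thing to an obstacle — is bookkeeping of terminology: one must make explicit that "binary separating decomposition" is not a new object but exactly a binary decomposition in the sense of Definition \ref{def:binaryDecomp}, obtained by following a decomposition of $\sigma$ into direct and skew sums (Definition \ref{def:decomposition}), and that under the bijection of Proposition \ref{prop:BijectionDecomposAndTress} such decompositions correspond exactly to the separating trees appearing in Proposition \ref{Prop:charactOfSeparable}. Once this identification is stated, the corollary is immediate, and I would keep the proof to two or three sentences, simply writing: this follows by combining Proposition \ref{Prop:charactOfSeparable} with the bijection of Proposition \ref{prop:BijectionDecomposAndTress}, since a binary separating tree associated to $\sigma$ exists if and only if a binary decomposition of $\sigma$ — i.e.\ a binary separating decomposition — exists.
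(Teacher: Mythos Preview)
Your proposal is correct and matches the paper's own treatment: the paper presents this corollary without a separate proof, introducing it simply as ``a Corollary of Proposition \ref{Prop:charactOfSeparable}'' immediately after establishing the bijection of Proposition \ref{prop:BijectionDecomposAndTress}. Your argument of chaining these two propositions is exactly the intended reasoning, and your attention to the terminological identification of ``binary separating decomposition'' with the binary decompositions of Definition \ref{def:binaryDecomp} is, if anything, more explicit than the paper itself.
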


We conclude by emphasizing the fact that there is a bijective correspondence between the signs in a binary decomposition of $\sigma$ and the internal vertices of the corresponding binary separating trees associated to $\sigma$. In addition, the internal vertices are not only decorated with the corresponding signs $\oplus$ or $\ominus$, but they also correspond to a certain matrix as one can see in the following definition.

\begin{definition}\label{def:wedgeInSeparatingTRee}
Let us consider a separable permutation $\sigma$ and one of its binary separating trees, say $T(\sigma)$. Let us suppose that  $i<j$. We denote by $\sigma(i)\wedge \sigma(j)$ the internal vertex of $T(\sigma)$ where the geodesics from the root to the leaves $\sigma(i)$ and $\sigma(j)$ separate. 
\end{definition}

\begin{definition}
Let us consider a separable permutation $\sigma$. Let us denote by $\mathrm{mat}(\sigma(i)\wedge \sigma(j))$ the minimal submatrix of $\sigma$ that contains $\sigma(i)$ and $\sigma(j)$.
\end{definition}

In order to understand better the notion of minimal submatrix of a permutation, the reader is invited to read Example \ref{ex:submatrix}.

\begin{example}\label{ex:submatrix}
If $\sigma:=\begin{pmatrix}
    1 &2& 3& 4& 5& 6& 7 \\
   6&7&4&5&1&3&2
  \end{pmatrix}$, see Figure \ref{fig:wedgePermutation} (in red) the matrix $\mathrm{mat}(\sigma(1)\wedge \sigma(4))=\begin{pmatrix}
    1 &2& 3& 4 \\
   3&4&1&2
  \end{pmatrix}$, which represents the minimal submatrix of $\sigma$ that contains both $\sigma(1)=6$ and $\sigma(4)=5$. In addition, we have $\mathrm{sign}(\sigma(1)\wedge \sigma(4))=\ominus$. Note that $\sigma(1)>\sigma(4),$ i.e. $6>5.$
  
\begin{figure}[H]
\centering
\includegraphics[scale=0.45]{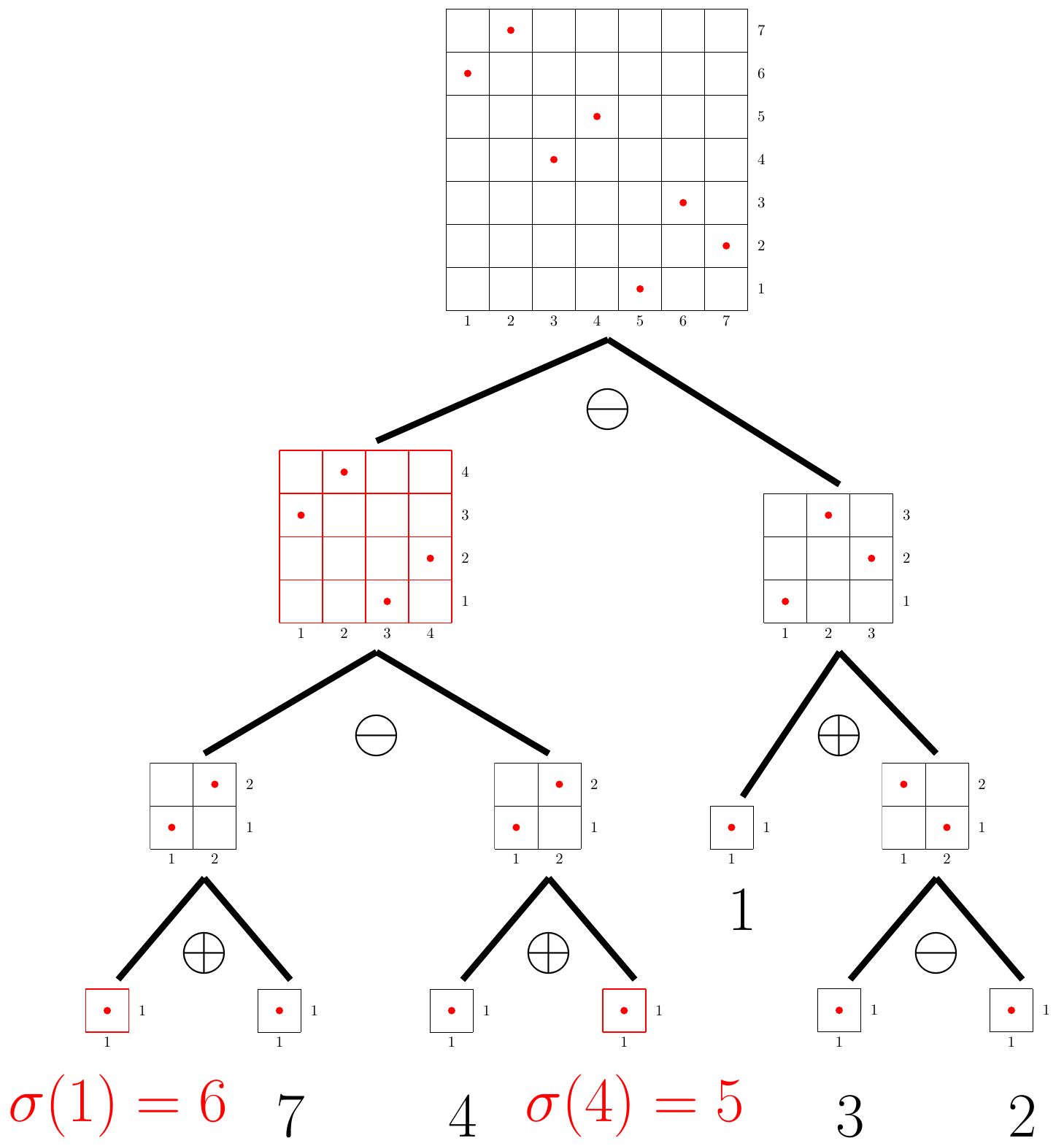} 
\caption{The matrix (in red) of $\sigma(1)\wedge \sigma(4)$ (in red) and the sign $\ominus$ of its corresponding vertex in the separating tree of $\sigma$.\label{fig:wedgePermutation}}
\end{figure}  
\end{example}

\begin{proposition}\label{prop:LegatPermutareSemn}
Let us consider a separable permutation $\sigma$ and one of its binary separating trees, say $T(\sigma)$. Let us suppose that  $i<j$. If one denotes by $\sigma(i)\wedge \sigma(j)$ the internal vertex of $T(\sigma)$ where the geodesics from the root to the leaves $\sigma(i)$ and $\sigma(j)$ separate, then $$\sigma(i)<\sigma(j) \Leftrightarrow \mathrm{sign}(\sigma(i)\wedge \sigma(j))=\oplus$$ and $$\sigma(i)>\sigma(j) \Leftrightarrow \mathrm{sign}(\sigma(i)\wedge \sigma(j))=\ominus.$$ 
\end{proposition}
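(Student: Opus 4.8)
The plan is to read off both equivalences directly from the four defining properties of a binary separating tree (Definition~\ref{def:binSepTreeAssoc}), once the positions of the leaves $\sigma(i)$ and $\sigma(j)$ relative to the internal vertex $v:=\sigma(i)\wedge\sigma(j)$ have been identified. By Definition~\ref{def:wedgeInSeparatingTRee}, $v$ is the vertex at which the geodesic from the root to $\sigma(i)$ and the geodesic from the root to $\sigma(j)$ separate; since $\sigma(i)$ and $\sigma(j)$ are distinct leaves these two geodesics do differ, so $v$ is a genuine internal vertex and exactly one of the two geodesics passes through $v_{left}$ while the other passes through $v_{right}$. The first step is to check that it is $\sigma(i)$ that lies in the subtree rooted at $v_{left}$ and $\sigma(j)$ that lies in the subtree rooted at $v_{right}$. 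This uses the hypothesis $i<j$ together with property~(ii) of Definition~\ref{def:binSepTreeAssoc}: the leaves are listed $\sigma(1),\ldots,\sigma(m+1)$ from left to right in the plane embedding, so $\sigma(i)$ occurs strictly to the left of $\sigma(j)$; and in a plane tree every leaf in the subtree of $v_{left}$ precedes, in the left-to-right order on leaves, every leaf in the subtree of $v_{right}$ (this is exactly the ordered structure of a plane tree recalled earlier in the paper). Hence $\sigma(i)$ is carried by $v_{left}$ and $\sigma(j)$ by $v_{right}$.

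With this in hand I would invoke property~(iii): the numbers decorating the leaves of the subtree of $v_{left}$ form an interval $I_{\mathrm{left}}$ and those decorating the leaves of the subtree of $v_{right}$ form an interval $I_{\mathrm{right}}$, with $\sigma(i)\in I_{\mathrm{left}}$ and $\sigma(j)\in I_{\mathrm{right}}$. Property~(iv) then yields the two forward implications simultaneously: if $v$ is a positive vertex, i.e. $\mathrm{sign}(v)=\oplus$, then every element of $I_{\mathrm{left}}$ is smaller than every element of $I_{\mathrm{right}}$, so $\sigma(i)<\sigma(j)$; if $v$ is a negative vertex, i.e. $\mathrm{sign}(v)=\ominus$, then every element of $I_{\mathrm{left}}$ is larger than every element of $I_{\mathrm{right}}$, so $\sigma(i)>\sigma(j)$.

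To obtain the equivalences rather than mere implications, I would finish with an exhaustion argument: since $\sigma$ is a permutation and $i\neq j$, exactly one of $\sigma(i)<\sigma(j)$ and $\sigma(i)>\sigma(j)$ holds, and since $v$ is an internal vertex of a separating tree it is decorated by exactly one of $\oplus$ and $\ominus$. The two implications already established then force $\mathrm{sign}(v)=\oplus$ to be equivalent to $\sigma(i)<\sigma(j)$, and $\mathrm{sign}(v)=\ominus$ to be equivalent to $\sigma(i)>\sigma(j)$, which is the assertion. I expect the only point requiring genuine care to be the first step, namely the identification of which of $\sigma(i),\sigma(j)$ sits in which subtree of $v$; everything afterwards is a direct application of the defining clauses (iii) and (iv). It may be worth isolating there, once and for all, the elementary fact that the left-to-right order on leaves refines the order ``left subtree before right subtree'' at every internal vertex of a plane tree, since this same observation is used implicitly when reasoning about the contact tree.
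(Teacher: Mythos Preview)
Your argument is correct and is essentially the same as the paper's proof: both identify that $\sigma(i)$ lies in the left part and $\sigma(j)$ in the right part of the decomposition at $v=\sigma(i)\wedge\sigma(j)$, and then read off the inequality from the sign. The paper phrases this via the submatrix $\mathrm{mat}(\sigma(i)\wedge\sigma(j))=m_1\oplus m_2$ (resp.\ $m_1\ominus m_2$) and Definition~\ref{def:directSkewSum}, whereas you stay entirely within the tree language of Definition~\ref{def:binSepTreeAssoc}~(iii)--(iv); but these are two ways of saying the same thing, and your version has the merit of making explicit the use of $i<j$ and the left-to-right leaf order to place $\sigma(i)$ under $v_{left}$, which the paper's proof leaves implicit.
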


\begin{proof}
We have $\mathrm{sign}(\sigma(i)\wedge \sigma(j))=\oplus$ (respectively $\mathrm{sign}(\sigma(i)\wedge \sigma(j))=\ominus$) if and only if we can decompose the submatrix $\mathrm{mat}(\sigma(i)\wedge \sigma(j))=m_1 \oplus m_2$ (respectively $\mathrm{mat}(\sigma(i)\wedge \sigma(j))=m_1 \ominus m_2$) such that $m_1$ contains $\sigma(i)$ and $m_2$ contains $\sigma(j)$. Now by Definition \ref{def:directSkewSum}, $\mathrm{mat}(\sigma(i)\wedge \sigma(j))=m_1 \oplus m_2$ if and only if $\sigma(i)<\sigma(j)$ (respectively $\mathrm{mat}(\sigma(i)\wedge \sigma(j))=m_1 \ominus m_2$ if and only if $\sigma(i)>\sigma(j)$).
\end{proof}

\section{Realising any given separable snake as an Arnold snake}\label{subsect:realising1var}

We are now ready to prove the main result of this paper:
\begin{leftbar}
\begin{theorem}\label{Th:OriginalConstructionOneVariable}
Consider $m\in\mathbb{N}$ and fix a separable $(m+1)$-snake $\sigma:\{1,2,\ldots,m+1\}\rightarrow \{1,2,\ldots,m+1\}$ such that $\sigma(m)>\sigma(m+1)$. Choose the polynomials $a_i(x)\in\mathbb{R}[x]$ such that their contact tree is one of the binary separating trees of $\sigma$ and construct a polynomial $Q_x(y)\in\mathbb{R}[x][y]$,
$$Q_x(y):=\int_{0}^y\prod_{i=1}^n\big (t-a_i(x)\big )\mathrm{d}t.$$ 
Then for sufficiently small $x>0$, $Q_x(y)$ is $(m+2)$-Morse (see Definition \ref{def:MorsePolyn}) and the Arnold snake associated to $Q_x(y)$ is the given snake $\sigma$.
\end{theorem}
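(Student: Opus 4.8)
The plan is to set $P_x(y):=\prod_{i=1}^{m+1}\bigl(y-a_i(x)\bigr)$, so that $Q_x'(y)=P_x(y)$ and $Q_x$ has degree $m+2$. First I would verify the Morse conditions for $0<x\ll 1$: the critical points of $Q_x$ are exactly the roots $a_1(x)\prec_+\cdots\prec_+a_{m+1}(x)$ of $P_x$, which are real and, since every gap satisfies $0\prec_+\delta_i(x)$, pairwise distinct for small $x>0$; a degree-$(m+2)$ polynomial has at most $m+1$ critical points, so these are all of them and they are simple. (The leading coefficient of $Q_x$ is $1/(m+2)$ rather than $1$, but replacing $Q_x$ by $(m+2)Q_x$ changes neither the critical points nor the induced Arnold snake, so I treat this normalisation as cosmetic and will obtain distinctness of the critical values from the computation below.) Since by hypothesis the contact tree is one of the binary separating trees $T(\sigma)$ of $\sigma$, it is complete plane binary; I identify its $i$-th leaf $a_i(x)$ with the $i$-th leaf of $T(\sigma)$ (labelled $\sigma(i)$), so that for $i<j$ the bifurcation vertex $a_i\wedge a_j$ is precisely the vertex $\sigma(i)\wedge\sigma(j)$ of $T(\sigma)$ (Definition \ref{def:wedgeInSeparatingTRee}) and therefore inherits a sign $\oplus$ or $\ominus$.

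The computational core is Lemma \ref{lemma:evenIThenUnderAxis}: for $1\le k\le m$,
\[
Q_x(a_{k+1}(x))-Q_x(a_k(x))=\int_{a_k(x)}^{a_{k+1}(x)}P_x(t)\,dt=(-1)^{m+1-k}\,\mathrm{S}_k(x),
\]
whose sign is $(-1)^{m+1-k}$ because $\mathrm{S}_k(x)>0$ for small $x>0$. Thus consecutive critical values differ, the sequence $[Q_x(a_1),\dots,Q_x(a_{m+1})]$ is alternating, and it ends with the descent $Q_x(a_m)>Q_x(a_{m+1})$ (case $k=m$). On the other hand $\sigma$ is an alternating permutation of $\{1,\dots,m+1\}$ with $\sigma(m)>\sigma(m+1)$, so an immediate induction gives $\sigma(k)<\sigma(k+1)\iff m+1-k$ is even. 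Applying Proposition \ref{prop:LegatPermutareSemn} to the consecutive pair $k<k+1$ and using Proposition \ref{prop:bij-surjection} together with Remark \ref{remark:GenericContactTreeBijection} (so that every bifurcation vertex is $a_c\wedge a_{c+1}$ for a unique $c\in\{1,\dots,m\}$), I obtain for every bifurcation vertex $v=a_c\wedge a_{c+1}$ the equivalence
\[
\mathrm{sign}(v)=\oplus\iff\sigma(c)<\sigma(c+1)\iff m+1-c\ \text{is even}.
\]

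To finish I would take any $1\le i<j\le m+1$ and telescope: $Q_x(a_j)-Q_x(a_i)=\sum_{k=i}^{j-1}(-1)^{m+1-k}\mathrm{S}_k(x)$. Let $v:=a_i\wedge a_j$; by Corollary \ref{coroll:GeneralWedge} this equals $a_i\wedge\cdots\wedge a_j$, hence $v=a_c\wedge a_{c+1}$ for a unique $c\in\{i,\dots,j-1\}$, while by Proposition \ref{prop:comparable} one has $\nu_x(\mathrm{S}_c)<\nu_x(\mathrm{S}_\ell)$ for every other $\ell\in\{i,\dots,j-1\}$ — this is the mechanism behind Proposition \ref{prop:valuationOfSumOfAreas}. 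Hence $(-1)^{m+1-c}\mathrm{S}_c(x)$ is the unique lowest-order term of the telescoped sum and its leading $x$-coefficient is positive, so for small $x>0$ the sign of $Q_x(a_j)-Q_x(a_i)$ is the sign of $(-1)^{m+1-c}$. Chaining this with the displayed equivalence above and with Proposition \ref{prop:LegatPermutareSemn} for the pair $i<j$ and the vertex $\sigma(i)\wedge\sigma(j)=v$,
\[
Q_x(a_i)<Q_x(a_j)\iff m+1-c\ \text{even}\iff\mathrm{sign}(v)=\oplus\iff\sigma(i)<\sigma(j).
\]
So $i\mapsto Q_x(a_i)$ and $i\mapsto\sigma(i)$ define the same linear order on $\{1,\dots,m+1\}$; in particular the critical values of $Q_x$ are pairwise distinct — completing the verification that $Q_x$ is $(m+2)$-Morse — and $\mathrm{rk}\bigl(Q_x(a_i)\bigr)=\sigma(i)$ for all $i$, i.e. the Arnold snake of $Q_x$ is $\sigma$. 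The main difficulty is not any single estimate but the bookkeeping linking the three ingredients: the parity of $m+1-c$ controls simultaneously the sign of the dominant area $(-1)^{m+1-c}\mathrm{S}_c$ (via Lemma \ref{lemma:evenIThenUnderAxis} and Proposition \ref{prop:comparable}) and the sign of the corresponding vertex of $T(\sigma)$ (via the alternation of $\sigma$ and Proposition \ref{prop:LegatPermutareSemn}); once Propositions \ref{prop:comparable}, \ref{prop:valuationOfSumOfAreas} and \ref{prop:LegatPermutareSemn} are in hand the argument is purely formal.
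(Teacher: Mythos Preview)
Your proof is correct and follows essentially the same route as the paper's: identify the critical points as the $a_i(x)$, telescope $Q_x(a_j)-Q_x(a_i)$ into a signed sum of areas, use Proposition~\ref{prop:valuationOfSumOfAreas} (via Proposition~\ref{prop:comparable}) to isolate the dominant term $\mathrm{S}_c$ at the vertex $a_i\wedge a_j$, and match its sign $(-1)^{m+1-c}$ with the $\oplus/\ominus$ label of that vertex through Proposition~\ref{prop:LegatPermutareSemn}. Your treatment is in fact slightly more explicit than the paper's in two places---the verification of the Morse conditions (including distinctness of critical values) and the parity bookkeeping $\sigma(k)<\sigma(k+1)\iff m+1-k$ even---whereas the paper packages the latter into Lemma~\ref{lemma:AlternDecompos} and Corollary~\ref{cor:LastSignMinus}.
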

\end{leftbar}

Note that effective constructions of (counter-)examples of families of (multivariate) polynomials with certain properties is a subject of interest in the study of the geometry and topology of real algebraic varieties: see for instance results from Bodin (\cite{Bod2}), Brugall\'{e} \cite{Bru1}, Itenberg (\cite{II2}), Sturmfels (\cite{Stu1}), Viro (\cite{Vi2}).

Before the proof of Theorem \ref{Th:OriginalConstructionOneVariable}, let us prove some useful lemmas. 

\begin{lemma}\label{lemma:AlternDecompos}
Let $\sigma:\{1,2,\ldots,m+1\}\rightarrow \{1,2,\ldots,m+1\}$ be a separable permutation. A binary decomposition of $\sigma$ has alternating signs $\oplus$ and $\ominus$ if and only if $\sigma$ is an $(m+1)$-snake (see Definition \ref{def:snake}).
\end{lemma}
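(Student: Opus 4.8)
The plan is to pass from the given binary decomposition to its associated binary separating tree $T(\sigma)$ via the bijection of Proposition \ref{prop:BijectionDecomposAndTress}, identify the sequence of signs of the decomposition with a sequence of signs of vertices of $T(\sigma)$, and then feed this through Proposition \ref{prop:LegatPermutareSemn} to recover the comparisons that define an alternating sequence.

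First I would establish the dictionary. When a binary decomposition is written as a parenthesised formula in $\boxdot$, $\oplus$, $\ominus$, its operation signs, read from left to right, occur in the order: all the signs of the left subtree, then the sign of the root, then all the signs of the right subtree; that is, in the in-order listing of the internal vertices of $T(\sigma)$. Moreover, by a one-line structural induction (base case: a single leaf; inductive step: the in-order list of $T_1\star T_2$ is the in-order list of $T_1$, which ends in a leaf, followed by the root, followed by the in-order list of $T_2$, which begins with a leaf) the in-order listing of a complete plane binary tree alternates leaf, internal vertex, leaf, internal vertex, $\ldots$. Since $T(\sigma)$ has $m$ internal vertices and $m+1$ leaves labelled $\sigma(1),\ldots,\sigma(m+1)$ from left to right, it follows that the $i$-th operation sign of the decomposition, for $i=1,\ldots,m$, is the sign of the unique internal vertex located between the leaves $\sigma(i)$ and $\sigma(i+1)$ in the in-order listing, and that vertex is exactly $\sigma(i)\wedge\sigma(i+1)$ in the sense of Definition \ref{def:wedgeInSeparatingTRee} (the vertex where the geodesics to these two consecutive leaves separate). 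In particular, this sign sequence is, by Proposition \ref{prop:LegatPermutareSemn} below, expressible purely in terms of $\sigma$, hence independent of which binary decomposition was chosen, so the statement is unambiguous.

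Then I would apply Proposition \ref{prop:LegatPermutareSemn} with $j=i+1$: one has $\mathrm{sign}(\sigma(i)\wedge\sigma(i+1))=\oplus$ if and only if $\sigma(i)<\sigma(i+1)$, and $\mathrm{sign}(\sigma(i)\wedge\sigma(i+1))=\ominus$ if and only if $\sigma(i)>\sigma(i+1)$. Thus the sign sequence of the decomposition records exactly the sequence of comparison symbols of the consecutive pairs in $[\sigma(1),\ldots,\sigma(m+1)]$.

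Finally I would assemble the chain of equivalences: the decomposition has alternating signs $\Longleftrightarrow$ the sequence $\big(\mathrm{sign}(\sigma(i)\wedge\sigma(i+1))\big)_{i=1}^{m}$ alternates between $\oplus$ and $\ominus$ $\Longleftrightarrow$ the comparison symbols of $[\sigma(1),\ldots,\sigma(m+1)]$ alternate between $<$ and $>$ $\Longleftrightarrow$ $[\sigma(1),\ldots,\sigma(m+1)]$ satisfies condition (a) or (b) of Definition \ref{def:alternatingSeq}, i.e.\ is an alternating sequence $\Longleftrightarrow$ $\sigma$ is an $(m+1)$-snake in the sense of Definition \ref{def:snake} (the cases $m=0$ and $m=1$ being vacuously true on both sides). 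The one step that requires genuine care is the first — pinning down "alternating signs of a decomposition" as the alternation of the sequence $\big(\mathrm{sign}(\sigma(i)\wedge\sigma(i+1))\big)_{i}$, which rests on the in-order alternation of leaves and internal vertices in a complete plane binary tree; once that is in place, the lemma is an immediate consequence of Proposition \ref{prop:LegatPermutareSemn}.
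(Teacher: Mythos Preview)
Your proof is correct and more carefully articulated than the paper's own. The paper's argument is a two-line contrapositive: two equal signs appearing consecutively in the decomposition force (via Definition~\ref{def:directSkewSum}) some $\sigma(i)<\sigma(i+1)<\sigma(i+2)$ or $\sigma(i)>\sigma(i+1)>\sigma(i+2)$, and conversely. You instead build a direct chain of equivalences by first making precise what ``the signs of the decomposition, read left to right'' means---namely the in-order traversal of the internal vertices of $T(\sigma)$, so that the $i$-th sign is $\mathrm{sign}(\sigma(i)\wedge\sigma(i+1))$---and then invoking Proposition~\ref{prop:LegatPermutareSemn} to translate each sign into the corresponding inequality $\sigma(i)\lessgtr\sigma(i+1)$. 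What your approach buys is rigour: the paper never says in what order the signs are being read or why that order matches the sequence of consecutive comparisons, and your in-order alternation argument fills that gap; it also yields, for free, the well-definedness remark that the sign sequence depends only on $\sigma$ and not on the chosen decomposition. The paper's version is shorter but leaves exactly that identification to the reader.
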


\begin{proof}
By Definition \ref{def:directSkewSum} and  Definition \ref{def:separablePerm}, if there are two signs $\oplus$ (respectively two signs $\ominus$) that appear consecutively in the binary decomposition, then $\sigma$ is not a snake. Reversely, if $\sigma$  is not a snake, then it has three leaves such that $\sigma(i)<\sigma(i+1)<\sigma(i+2)$ (respectively $\sigma(i)>\sigma(i+1)>\sigma(i+2)$) and this implies the existence of two signs $\oplus$ (respectively two signs $\ominus$) that appear consecutively in the binary decomposition.
\end{proof}

\begin{corollary}\label{cor:LastSignMinus}
Let $\sigma:\{1,2,\ldots,m+1\}\rightarrow \{1,2,\ldots,m+1\}$ be a separable snake such that $\sigma(m)>\sigma(m+1).$ Then any  binary decomposition of $\sigma$ has alternating signs $\oplus$ and $\ominus$ such that the rightmost sign is $\ominus$.
\end{corollary}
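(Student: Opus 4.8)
The plan is to reduce the statement to Lemma~\ref{lemma:AlternDecompos} together with Proposition~\ref{prop:LegatPermutareSemn}. Since $\sigma$ is a separable snake, Lemma~\ref{lemma:AlternDecompos} already yields that every binary decomposition of $\sigma$ has alternating signs $\oplus$ and $\ominus$; so the only thing left to establish is that the \emph{rightmost} sign is $\ominus$.

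Fix a binary decomposition of $\sigma$ and, via Proposition~\ref{prop:BijectionDecomposAndTress}, regard it as a binary separating tree $T(\sigma)$ whose leaves are $\sigma(1),\ldots,\sigma(m+1)$ read from left to right. First I would identify the rightmost sign of the decomposition with a specific internal vertex of $T(\sigma)$: writing the decomposition as $d_1\star d_2$ and descending recursively into the right component $d_2$ until it becomes the trivial permutation $\boxdot$, the last sign encountered is exactly the rightmost one, and it decorates the last internal vertex $v$ on the ``always--go--right'' path from the root of $T(\sigma)$, i.e. the parent of the rightmost leaf $\sigma(m+1)$. The left subtree of $v$ is an interval of leaves whose last element (from left to right) is $\sigma(m)$, so by Definition~\ref{def:wedgeInSeparatingTRee} we have $v=\sigma(m)\wedge\sigma(m+1)$.

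It then remains to apply Proposition~\ref{prop:LegatPermutareSemn} with $i=m$ and $j=m+1$: since by hypothesis $\sigma(m)>\sigma(m+1)$, we obtain $\mathrm{sign}(\sigma(m)\wedge\sigma(m+1))=\ominus$, that is, the rightmost sign of the decomposition is $\ominus$, as claimed. As the decomposition was arbitrary, this proves the corollary.

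The only step that requires a little care is the combinatorial bookkeeping that matches ``the rightmost sign of a binary decomposition'' with the vertex $\sigma(m)\wedge\sigma(m+1)$ of the separating tree; once this identification is in place, the rest is a direct invocation of the quoted results. Alternatively, one can bypass Proposition~\ref{prop:LegatPermutareSemn} and argue directly: the last step before reaching $\boxdot$ on the right writes the corresponding terminal block of $\sigma$ as $\pi'\star\boxdot$, and if $\star=\oplus$ then $\sigma(m+1)$ is the maximum of that block, forcing $\sigma(m)<\sigma(m+1)$, contrary to hypothesis; hence $\star=\ominus$.
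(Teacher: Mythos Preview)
Your proof is correct. The paper actually states this corollary without proof, treating it as an immediate consequence of Lemma~\ref{lemma:AlternDecompos}; your argument supplies the missing detail by identifying the rightmost sign with the vertex $\sigma(m)\wedge\sigma(m+1)$ and invoking Proposition~\ref{prop:LegatPermutareSemn} (or, in your alternative, arguing directly from the definition of $\oplus$), which is exactly the kind of justification the paper implicitly relies on.
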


\begin{lemma}\label{lemma:RealisingCtctTree}
Let $K_+$ be an end-rooted complete plane binary tree, which has $m+1$ leaves. Then there exist $m+1$ polynomials with real coefficients $a_i(x)\in \mathbb{R}[x]$, for $i=1,\ldots,m+1$, such that $a_i(0)=0$ for $i=1,\ldots,m+1$ and
$K_+=\mathcal{CT}(a_1(x),\ldots,a_{m+1}(x)),$ where $\mathcal{CT}(a_1(x),\ldots,a_{m+1}(x))$ represents the contact tree of the polynomials $a_i.$
\end{lemma}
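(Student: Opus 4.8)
The plan is to proceed by induction on the number of leaves $m+1$, exploiting the recursive structure of complete plane binary trees (see Remark \ref{remark:recursivelyDefinedSets}) together with the constructive Definition \ref{def:CT} of the contact tree. The idea is that a contact tree is entirely determined by two kinds of data: its combinatorial shape as an end-rooted complete plane binary tree, and the strictly increasing sequence of valuations decorating the bifurcation vertices along each geodesic from the root. So I would build the polynomials $a_i(x)$ so as to prescribe exactly these valuations. Concretely, for the base case $m=0$ (one leaf, i.e. $K_+$ consisting of a root joined to a single leaf), take $a_1(x):=x$ — or any polynomial with $a_1(0)=0$ — whose contact tree is trivially $K_+$.

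For the inductive step, write $K_+$ with its root $\mathcal R$ having a unique child $v$, which is the first bifurcation vertex; removing the edge $\mathcal R v$ and the arrows below splits the subtree at $v$ into a left complete plane binary tree $T_1$ with $p$ leaves and a right one $T_2$ with $q$ leaves, $p+q=m+1$. By the induction hypothesis applied to the end-rooted completions of $T_1$ and $T_2$, there are polynomials $b_1(x)\prec_+\cdots\prec_+ b_p(x)$ with $b_i(0)=0$ realising $T_1$, and $c_1(x)\prec_+\cdots\prec_+ c_q(x)$ with $c_j(0)=0$ realising $T_2$. Now pick an integer $N$ strictly larger than every valuation appearing in $T_1$ or $T_2$, and set
$$a_i(x):=x^{N}\cdot b_i(x)\quad (1\le i\le p),\qquad a_{p+j}(x):=x^{N}\cdot c_j(x)+x\quad (1\le j\le q).$$
Multiplying by $x^N$ rescales all the internal valuations of the two blocks upward by $N$ (so their relative structure, hence the shapes $T_1$, $T_2$, is preserved), while the extra additive term $x$ on the second block forces $\nu_x(a_{p+j}(x)-a_i(x))=1$ for every $i\le p<p+j$, since $1<N$. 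Thus every leaf of the first block meets every leaf of the second block at valuation $1$, which produces a single new bifurcation vertex at valuation $1$ below the root, with $T_1$ hanging on the left and $T_2$ on the right; all $a_i(0)=0$ as required. One also has to check the ordering $a_1\prec_+\cdots\prec_+ a_{m+1}$: within each block it is inherited from the $b_i$ resp.\ $c_j$ after multiplication by the positive quantity $x^N$, and across blocks it holds because $a_{p+1}-a_p = x^N(c_1-b_p)+x$ has valuation $1$ with positive leading coefficient. By the description in Definition \ref{def:CT}, the contact tree of $a_1,\ldots,a_{m+1}$ is then exactly $K_+$.

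The main obstacle I anticipate is bookkeeping rather than conceptual: one must make sure that multiplication by $x^N$ really does shift every bifurcation valuation by exactly $N$ and does not accidentally create or destroy a bifurcation (it does not, because $\nu_x(x^N b_i - x^N b_{i'}) = N + \nu_x(b_i - b_{i'})$, and the strict inequalities among valuations are preserved under adding a constant $N$), and that the additive $x$ on the right block does not interfere with the internal structure of that block (it does not, since it is common to all the $c_j$ and so cancels in every difference $a_{p+j}-a_{p+j'}$). A secondary point worth stating explicitly: if the root of $K_+$ is required to have exactly one neighbour (end-rooted), the construction above indeed yields such a tree, because $\Lambda(0,0)\equiv\Lambda(0,i)$ for all $i$ and the first place the $a_i$ start to differ is at valuation $1$, giving the root a single edge down to the first bifurcation vertex. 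Finally I would remark that, combined with Proposition \ref{Prop:charactOfSeparable}, this lemma is what lets us choose the polynomials $a_i(x)$ in Theorem \ref{Th:OriginalConstructionOneVariable} so that their contact tree is a prescribed binary separating tree of the given separable snake $\sigma$.
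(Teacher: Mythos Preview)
Your inductive construction is correct. The paper takes a different, more direct route: instead of recursing on the tree, it assigns in one shot to each bifurcation vertex $v$ of $K_+$ a positive integer $\nu(v)$ (any labelling that is strictly increasing along every root-to-leaf geodesic works), labels the edge from $v$ to its lower child with coefficient $0$ and the edge to its upper child with coefficient $1$, and then sets $a_i(x):=\sum_{v\in\mathcal G_i}c_*(v)\,x^{\nu(v)}$, the sum of the resulting monomials along the geodesic $\mathcal G_i$ from the root to the $i$-th leaf. The two constructions are closely related --- your additive $+x$ at the outermost recursive step is exactly the paper's coefficient-$1$ edge at the first bifurcation vertex --- but the global labelling avoids tracking valuation shifts through a recursion (incidentally, your hypothesis on $N$ is stronger than needed: any $N\ge 1$ already works, since the shifted internal valuations are $\ge N+1>1$ regardless). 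The one-shot formula has the advantage of producing polynomials of small degree with an explicit closed form, as in Example~\ref{ex:ConstructionOfAi}; your recursive version, on the other hand, makes the inductive verification very transparent and would adapt more readily if one wished to graft together contact trees built from separate pieces.
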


The result of Lemma \ref{lemma:RealisingCtctTree} is already mentioned in \cite[page 29]{Gh1}. Let us provide a constructive proof.

\begin{proof}
Construct the polynomials $a_i(x)\in \mathbb{R}[x]$ as follows: decorate each internal vertex, i.e. bifurcation vertex, with a positive integer number such that the sequence of integer numbers is strictly increasing on each geodesic from the root towards any leaf. The root is decorated with $0$.
In other words: by taking into account the fact that on each geodesic starting from the root the valuations of the monomials in $x$ are increasing, we can assign to each vertex a valuation, say $\nu(v):=p\in\mathbb{N}$ (the root is assigned the $0$ valuation); since $K_+$ is an end-rooted complete plane binary tree by hypothesis, each internal vertex $v$ of $K_+$ has exactly two children. Each of the two children of $v$ is connected to $v$ by an edge. Since $K_+$ is embedded in the real plane, one can decide which is the first child and which is the second child of $v$, by the induced orientation of $\mathbb{R}^2.$ Now let us label the edge connecting $v$ to its first child with the coefficient $c_{first}(v):=0$ (obtaining thus the monomial $0\times x^p$). Similarly, let us label the edge connecting $v$ to its second child with the coefficient $c_{second}(v):=1$ (obtaining thus the monomial $1\times x^p$). The unique edge starting from the root will have coefficient $0$.
 Now for each leaf $a_i(x)$ we add the monomials on the geodesic $\mathcal{G}_i$ from the root to  $a_i(x)$, thus obtaining $a_i(x)=\sum_{v\in\mathcal{G}_i}c_
 {*}(v)x^{\nu(v)}$, where $*$ is either \emph{first} or \emph{second}, depending on the geodesic we are following. 
\end{proof}

\begin{remark*}
The condition $a_i(0)=0$ for $i=1,\ldots,m+1$ is realisable since by hypothesis the tree $K_+$ is an \textbf{end-rooted} complete plane binary tree. 
\end{remark*}

\begin{example}\label{ex:ConstructionOfAi}
Given the end-rooted complete plane binary tree $K_+$ with $5$ leaves like in Figure \ref{fig:exCtcTreeRealised}, one can construct the following polynomials $a_i(x)$, $i=1,\ldots,5$ such that they realise $K_+$ as their contact tree: $a_1(x):=0,$ $a_2(x):=x^3,$ $a_3(x)=:x^2,$ $a_4(x):=x^2+x^5$ and $a_5(x):=x^2+x^5+x^6.$
\begin{figure}[H]
\centering
\tikzset{cross/.style={cross out, draw=black, fill=none, minimum size=2*(#1-\pgflinewidth), inner sep=0pt, outer sep=0pt}, cross/.default={2pt}}
\begin{tikzpicture}[scale=0.85]
\tikzstyle{vertex} = [draw,blue]
\tikzstyle{edge} = [draw,red,thick]
\draw[->] (-3,-6) -- (7,-6) node[right] {$x$};
\draw[->] (0,-7) -- (0,-1) node[left] {$y$};
\node at (0,-6.3) [above left] {$O$};
\draw[ color=red] (0,-5) -- (1,-5) ;
\node[cross] at (0,-5){}; 
\fill[vertex] (1,-5) circle(2pt);
       \node at (1,-5) [below, color=blue] {$x^2$};
\draw[->, thick,color=green] (0,-5) -- (7,-5)node[right] { \color{red}{$a_1$} \color{green}{$\mathbf{\mathcal{G}_1}$}} ;
 \node at (3,-5) [below, color=blue] {$x^3$};
 \fill[vertex] (3,-5) circle(2pt);
\draw[->, color=red] (3,-5) -- (7,-4)node[right] {$a_2$} ;
\draw[->, color=red] (1,-5) -- (4,-3)node[] {} ;
\fill[vertex] (4,-3) circle(2pt);
\draw[->, color=red] (4,-3) -- (7,-3)node[right] {$a_3$} ;

\draw[->, color=red] (4,-3) -- (5,-2)node[] {} ;
 \node at (4,-3) [below, color=blue] {$x^5$};
 \node at (5,-2) [above, color=blue] {$x^6$};
\fill[vertex] (5,-2) circle(2pt);
\draw[->, color=red] (5,-2) -- (7,-2)node[right] {$a_4$} ;
\draw[->, color=red] (5,-2) -- (7,-1)node[right] {$a_5$} ;

\node at (0.5,-4.8) [below, color=red] {$0$};
\node at (2.3,-4.8) [below, color=red] {$0$};
\node at (4.3,-4.8) [below, color=red] {$0$};
\node at (4.3,-3.8) [below, color=red] {$1$};
\node at (2.3,-3.8) [below, color=red] {$1$};
\node at (2.3,-3.8) [below, color=red] {$1$};
\node at (4.5,-1.5) [below, color=red] {$1$};
\node at (6.3,-0.5) [below, color=red] {$1$};
\node at (6.3,-1.7) [below, color=red] {$0$};
\node at (6.1,-2.7) [below, color=red] {$0$};
\end{tikzpicture}
\caption{Our choice of polynomials realising a given end-rooted complete plane binary tree as their contact tree.\label{fig:exCtcTreeRealised}}
\end{figure}
\end{example}

\begin{proof}

Let us prove now Theorem \ref{Th:OriginalConstructionOneVariable}.

\begin{leftbar}
There are several steps. First, given a separable $(m+1)$-snake $\sigma:\{1,2,\ldots,m+1\}\rightarrow \{1,2,\ldots,m+1\}$ such that $\sigma(m)>\sigma(m+1),$ we construct one of its \textbf{binary} decomposition trees, denoted by $K_+.$ By Definition \ref{def:binSepTreeAssoc}, the leaves of $K_+$ are in a bijective relation with $\{\sigma(1),\ldots,\sigma(m+1)\}$. By Lemma \ref{lemma:RealisingCtctTree} we can  choose real polynomials $a_1(x),\ldots,a_{m+1}(x)$ such that (after adding an extra root to $K_+$, thus transforming it into an \textbf{end-rooted} complete plane binary tree) we have $K_+=\mathcal{CT}(a_1(x),\ldots,a_{m+1}(x))$. By the construction of the contact tree, the polynomials are in bijective correspondence with the leaves of the contact tree. In this proof, by abuse of language, if a leaf  corresponds to the label $\sigma(i)$ we will sometimes label it with  $a_i(x)$, for any $i\in\{1,\ldots,m+1\}$. Since $K_+$ is an embedded tree which represents both a contact tree and a binary separating tree of $\sigma,$ this double labelling of the leaves will enable us to identify in the tree the vertices as follows: $a_i\wedge a_j\equiv \sigma(i)\wedge\sigma(j)$. Furthermore, we define the one real variable polynomials $P_x(y), Q_x(y)\in\mathbb{R}[x][y]$, $P_x(y):=\prod_{i=1}^{m+1}\big(y-a_i(x)\big )$ and then $Q_x(y):=(m+2)\int_{0}^y P_x(t)\mathrm{d}t.$ Denote by $c_i(x)$ the critical values of $Q_x(y)$. Finally, we prove that for a sufficiently small $x>0$, this construction gives us the desired equivalence $c_i(x)>c_j(x)$ if and only if $\sigma(i)>\sigma(j).$
\end{leftbar}

$\bullet$ \textbf{Step $1$:} By hypothesis, $\sigma$ is separable, thus by Proposition  \ref{Prop:charactOfSeparable} $\sigma$ has at least one binary decomposition.  By Proposition \ref{prop:BijectionDecomposAndTress}, the binary decomposition corresponds to a binary separating tree. Let us denote this tree by $K_+$. Now, we obtained $K_+$, a complete plane binary tree. In addition, since $\sigma$ is also an $(m+1)$-snake $\sigma:\{1,2,\ldots,m+1\}\rightarrow \{1,2,\ldots,m+1\}$ such that $\sigma(m)>\sigma(m+1),$ by Lemma \ref{lemma:AlternDecompos} and by Corollary \ref{cor:LastSignMinus} we know that the signs of the internal vertices of $K_+$ alternate and that the rightmost internal vertex is decorated with the $\ominus$ sign (see Example \ref{ex:constr}).

$\bullet$ \textbf{Step $2$:} After adding an extra root to $K_+$, thus transforming it into an \textbf{end-rooted} complete plane binary tree, let us now apply Lemma \ref{lemma:RealisingCtctTree} and construct a set of polynomials $a_i(x)\in \mathbb{R}[x]$ that realise this tree as their contact tree, namely such that $K_+=\mathcal{CT}(a_1(x),\ldots,a_{m+1}(x)).$

$\bullet$ \textbf{Step $3$:} Define the unitary polynomial $P_x(y)\in\mathbb{R}[x][y]$, of degree $m+1$ in the variable $y$ such that the polynomials $a_i(x)$ constructed above are the simple real roots of $P_x(y)$:
$$P_x(y):=\prod_{i=1}^{m+1}\big(y-a_i(x)\big ).$$
Denote by $$\mathrm{S}_i(x):=\bigg \vert \int_{a_i(x)}^{a_{i+1}(x)}P_x(t)\mathrm{d}t\bigg \vert.$$ By Corollary \ref{cor:Diferenta}, we have that the area $\mathrm{S}_m$ is situated below the $Ox$-axis and then the positions of the areas alternate above and below. In other words, we say that an area $\mathrm{S}_i$ is situated below (respectively above) the $Ox$-axis when the integral $\int_{a_i(x)}^{a_{i+1}(x)}P_x(t)\mathrm{d}t$ is negative (respectively positive), thus we associate the minus (respectively plus) sign to the area $\mathrm{S}_i$. 

Similarly, the reader should remember that the signs of a binary decomposition of the separable snake $\sigma$ are alternating and ending with $\ominus.$
Therefore we obtain $$\mathrm{sign}(\sigma(i)\wedge\sigma(i+1))=\ominus\Leftrightarrow \int_{a_i(x)}^{a_{i+1}(x)}P_x(y)\mathrm{d}y<0$$ and  $$\mathrm{sign}(\sigma(i)\wedge\sigma(i+1))=\oplus\Leftrightarrow \int_{a_i(x)}^{a_{i+1}(x)}P_x(y)\mathrm{d}y>0.$$ 

$\bullet$ \textbf{Step $4$:} Furthermore, denote by 
$$Q_x(y):=(m+2)\int_{0}^y P_x(t)\mathrm{d}t,$$
the unitary polynomial in $\mathbb{R}[x][y]$ such that $\frac{\partial Q_x(y)}{\partial y}=(m+2)P_x(y)$, $Q_x(0)=0$. Therefore, the critical points of $Q_x(y)$ are the roots of $P_x(y),$ i.e. the polynomials $a_i(x),$ for $i=1,\ldots,m+1.$ The critical values of $Q_x(y)$ are thus $$c_i(x):=Q_x(a_i(x)),$$ for $i=1,\ldots,m+1.$

$\bullet$ \textbf{Step $5$:} Now by using Corollary \ref{cor:Diferenta}, let us compute the difference between two arbitrary critical values of $Q_x(y),$ say $$c_j(x)-c_i(x)=Q_x(a_{j}(x))-Q_x(a_i(x))=(m+2)\int_{a_i(x)}^{a_{j}(x)}P_x(t)\mathrm{d}t=(m+2)((-1)^{m+1-i}\mathrm{S}_i+\ldots+
(-1)^{m+1-j}\mathrm{S}_{j}).$$ For a better understanding, the reader is invited to see Figure \ref{fig:ValCritCi} below.

\begin{figure}[H]
\includegraphics[scale=0.15]{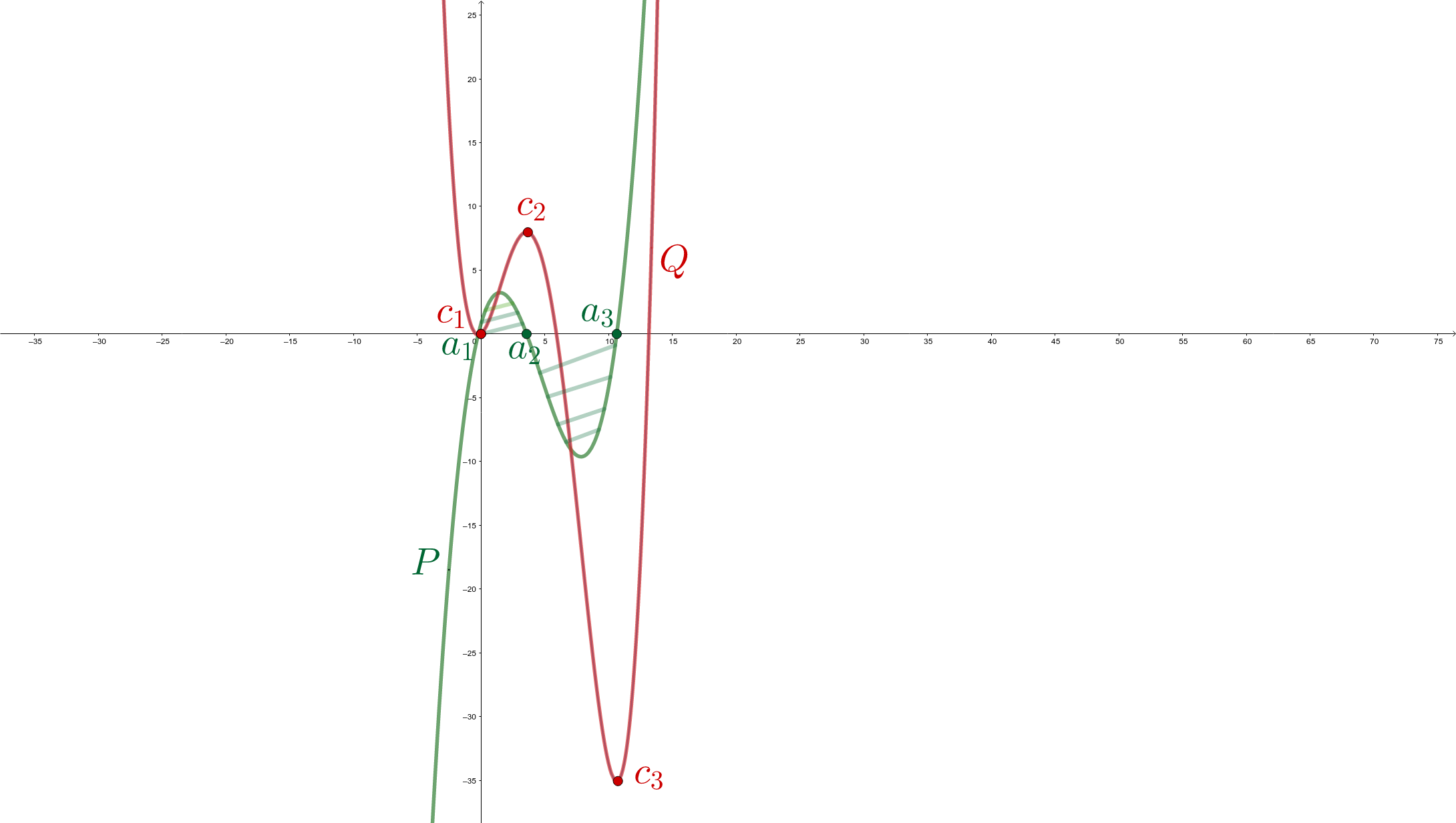} 
\caption{The critical points of $Q_x(y)
$ are the roots of $P_x(y)$ .\label{fig:ValCritCi}}
\end{figure}

By Proposition \ref{prop:valuationOfSumOfAreas}, if $i<j$, we have: 
$$\nu_x\left (\pm\mathrm{S}_i\pm\mathrm{S}_{i+1}
\pm\ldots\pm\mathrm{S}_{j-1}\right )=\nu_x(\mathrm{S}_k),$$ where $\mathrm{S}_k$ denotes the unique area corresponding to the bifurcation vertex $a_i\wedge a_j.$ Thus for small enough $0<x\ll 1$, we have: $c_j(x)-c_i(x)>0$ if and only if the area $\mathrm{S}_k$ is situated above the $Ox$-axis, that is if and only if $\mathrm{sign}(\sigma(i)\wedge\sigma(j))=\oplus$. By Proposition \ref{prop:LegatPermutareSemn}, the last equality is equivalent to $\sigma(j)>\sigma(i)$. 

$\bullet$ \textbf{Step $6$:} The proof is completed: we constructed a polynomial $Q_x(y)\in\mathbb{R}[x][y]$ such that for a sufficiently small $0<x\ll 1$ the critical values of $Q_x(y)$ verify $c_j(x)>c_i(x) \Leftrightarrow \sigma(j)>\sigma(i),$ where $\sigma$ is the given separable snake.  
\end{proof}

\begin{remark*}
Note that the separability of the snakes is due to the hypothesis we impose on the contact trees: they are complete and binary. 
\end{remark*}

\begin{example}\label{ex:constr}
Given the separable snake $\sigma:=\begin{pmatrix}
    1 &2& 3& 4&5 \\
   4&5&1&3&2
  \end{pmatrix}$, let us construct a real univariate $6$-Morse polynomial $Q_x(y)\in\mathbb{R}[x][y]$ such that its associated snake is $\sigma.$
  
\textbf{Step $1$:} obtain a binary separating tree of $\sigma$, denoted by $K_+$, and its associated binary decomposition (see Figure \ref{fig:exBinarySepTreedecomposition} below).

  \begin{figure}[H]
  \centering
\includegraphics[scale=0.5]{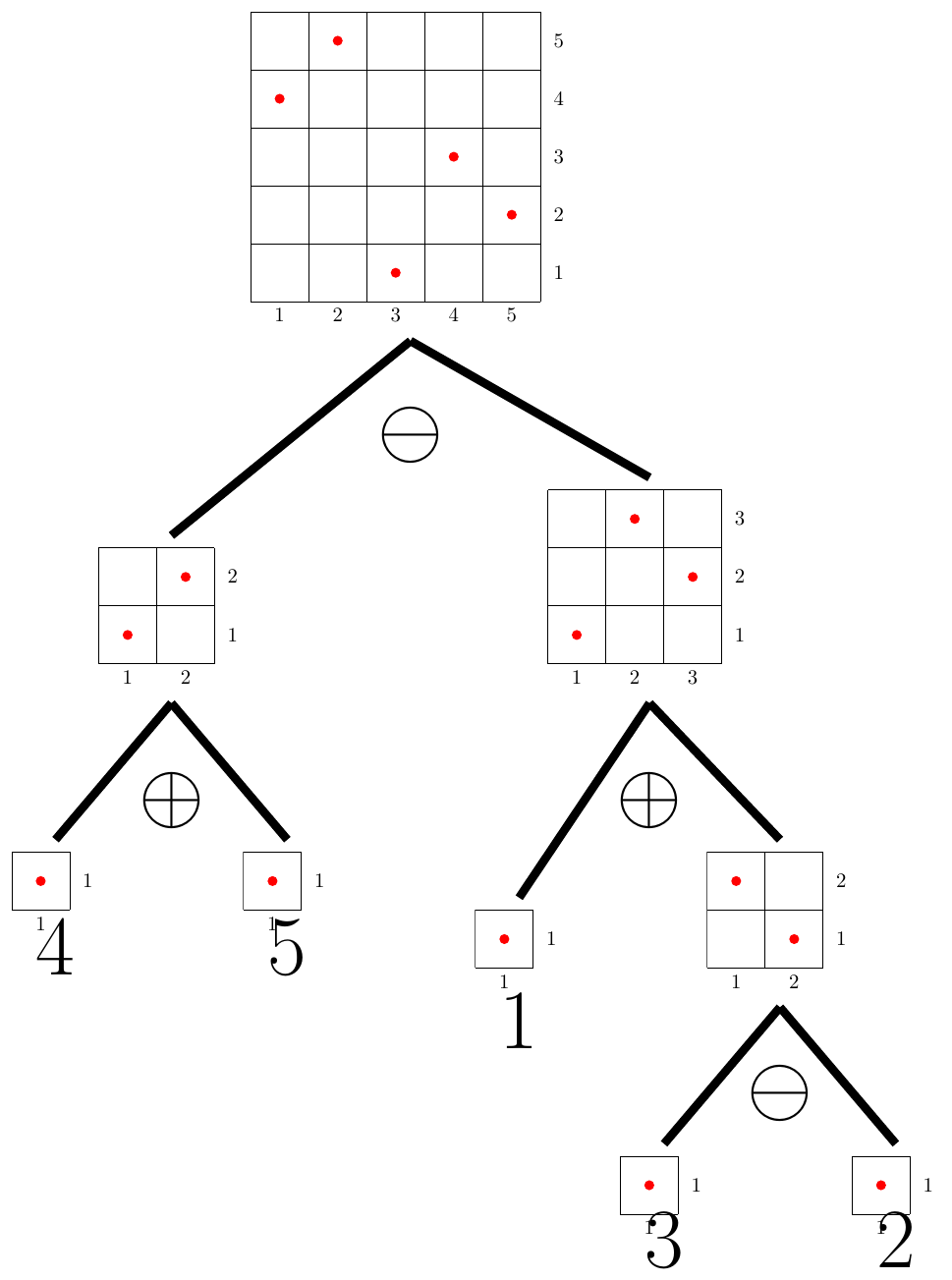} 
  \caption{A binary separating tree of $\sigma=(\boxdot \oplus \boxdot)\ominus (\boxdot\oplus(\boxdot\ominus \boxdot))$. \label{fig:exBinarySepTreedecomposition}}
  \end{figure}

\textbf{Step $2$:} construct the polynomials $a_i(x)$, for $i=1,\ldots,5$ such that $\mathcal{CT}(a_1(x),\ldots,a_5(x))$ is the separating tree $K_+$ of $\sigma$. This has already been done for this tree, in Example \ref{ex:ConstructionOfAi}: $a_1(x)=0,$ $a_2(x)=x^3,$ $a_3(x)=x^2,$ $a_4(x)=x^2+x^5$ and $a_5(x)=x^2+x^5+x^6.$ For a better vision we just rotate the embedded separating tree $\pi/2$ counterclockwise.
  
\textbf{Steps $3$ and $4$:}  For a sufficiently small $0<x\ll 1$, define $P_x(y)\in\mathbb{R}[x][y],$ $P_x(y):=\prod_{i=1}^5(y-a_i(x))$ (see Figure \ref{fig:p}), then $Q_x(y):=6\int_{0}^y P_x(t)\mathrm{d}t$ (see Figure \ref{fig:q}). The critical points of $Q_x(y)$ are the roots of $P_x(y).$ 

\textbf{Step $5$:} Let us denote by $c_i(x):=Q_x(a_i(x))$ the $i$-th critical value of $Q_x(y).$ We have $c_j(x)>c_i(x) \Leftrightarrow \sigma(j)>\sigma(i).$

  \begin{figure}[H]
  \centering
    \includegraphics[scale=0.7]{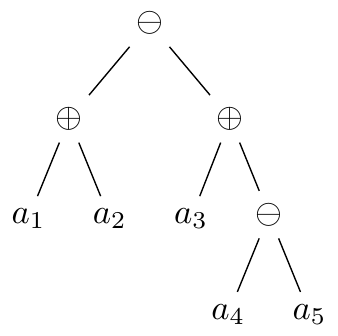} 
    
  \includegraphics[scale=0.3]{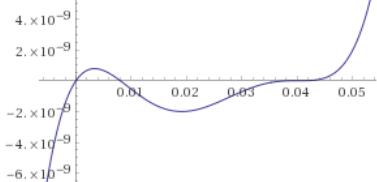} 
  \caption{The graph of $P_x(y)$ for $0<x\ll 1$ small enough: the $5$ roots of the polynomial $P_x(y)$ are in bijective correspondance with the leaves of the binary decomposition tree of the given separable snake $\sigma$; any sign decorating an internal vertex of the tree corresponds with the position of the respective area $\mathrm{S}_i$ ($\oplus$ for an area above $Ox$, respectively $\ominus$ for an area below $Ox$).\label{fig:p}}
  \end{figure}
   \begin{figure}[H]
  \centering
  \includegraphics[scale=0.3]{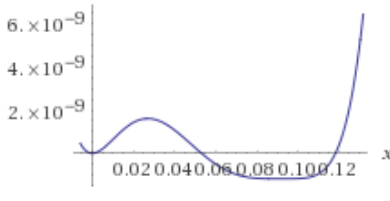} 
  \caption{The graph of $Q_x(y)$ for $0<x\ll 1$ small enough.\label{fig:q}}
  \end{figure}
  
  \begin{figure}[H]
  \centering
  \includegraphics[scale=0.3]{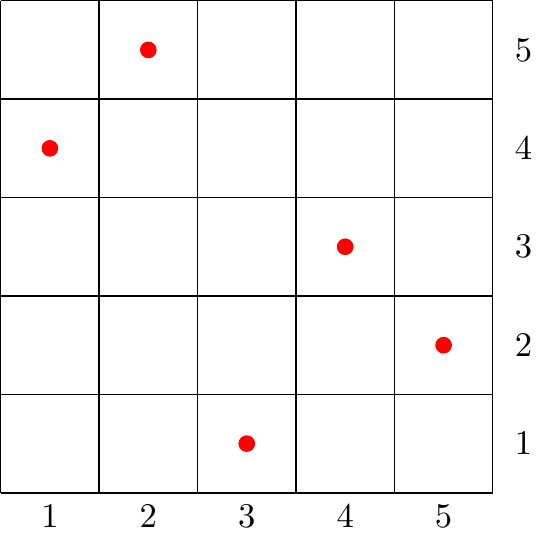} 
  \caption{The Arnold snake $\sigma$ associated to $Q_x(y)$.}
  \end{figure}
\end{example}

\printbibliography
\end{document}